\numberwithin{equation}{section}
\newcommand{\CL}{{\cal L}}
\newcommand{\CM}{{\cal M}}
\newcommand{\CN}{{\cal N}}
\newcommand{\CO}{{\cal O}}
\newcommand{\CQ}{{\cal Q}}
\newcommand{\CH}{{\cal H}}
\newcommand{\CA}{{\cal A}}
\newcommand{\R}{{\mathbb R}}
\newcommand{\N}{{\mathbb N}}
\newcommand{\E}{\mathbb{E}}
\newcommand{\prob}{\mathbb{P}}
\newcommand{\eps}{\varepsilon}
\newcommand{\norm}[1]{\left\lVert#1\right\rVert}
\newcommand{\trace}{\mathrm{trace}}
\newtheorem{theorem}{Theorem}[section]
\newtheorem{lemma}[theorem]{Lemma}
\newtheorem{definition}[theorem]{Definition}
\newtheorem{remark}[theorem]{Remark}
\newtheorem{proposition}[theorem]{Proposition}
\begin{document}
\title{Stochastic Cahn-Hilliard equation in higher space dimensions: The motion of bubbles}
\author{Alexander Schindler, Dirk Bl\"omker \\ Universit\"at Augsburg}
\date{\today}
\maketitle

\begin{abstract}
We study the stochastic motion of a droplet in a stochastic Cahn-Hilliard equation
in the sharp interface limit for sufficiently small noise.
The key ingredient in the proof is a deterministic slow manifold, where we show its stability for long times under 
small stochastic perturbations. 
We also give a rigorous stochastic differential equation
for the motion of the center of the droplet.
\end{abstract}

\section{Introduction}

In this work we consider the stochastic Cahn-Hilliard equation 
(also known as the Cahn-Hilliard-Cook equation \cite{Cook}) posed on 
a two-dimensional bounded smooth domain $\Omega \subset \mathbb{R}^2$: 
\begin{align}
\partial_t u &= - \Delta ( \eps^2 \Delta u - F^\prime (u) ) + \partial_t {W}(x,t), \, &x &\in \Omega, \nonumber \\
\partial_n u &= \partial_n \Delta u  = 0, \, &x &\in \partial \Omega.
\label{eq}
\end{align}
Here, $\eps$ is a small positive parameter measuring the relative 
importance of surface energy to the bulk free energy, 
and $\partial_n$ denotes the exterior normal derivative to the boundary $\partial \Omega$.
$F$ is assumed smooth with two equal nondegenerate minima, at $u = \pm 1$. 
A typical example is $F(u) = \frac14 (u^2 - 1)^2$.
We focus on this special case here, although most of the results hold for a very general class of 
nonlinearities. Only the precise formulation of the  stability result
and the condition on the noise strength there will change depending on the growth of $F$
at $\infty$.

The forcing is given by an additive white in time noise $\partial_t{W}$.
As we rely for simplicity of presentation on It\^o's formula, 
we assume that    the Wiener process is  sufficiently  smooth in space, 
and moreover sufficiently small in $\eps$, so that it does not destroy 
the typical patterns in the solutions.  

The existence and uniqueness of solutions is well-studied (see e.g.\cite{DPDe:96, CaWe:01})
and we always assume that we have a unique solution. Moreover, as we assume the noise to be smooth in space, 
the solution should be regular in space, too.

The deterministic Cahn-Hilliard equation is a gradient flow in the $H^{-1}$-topology 
for the following energy 
\[
 E_\eps(u) = \int_\Omega \frac{\eps^2}{2}|\nabla u|^2 +F(u)\; \mathrm{d} x 
\]
In order to minimize this energy,
one can expect that, for $0<\eps \ll 1$, solutions of (\ref{eq})
stay mostly near $u = -1$ and $u = +1$, the stable minima of $F(u)$.
Moreover, the gradient can be of order $\eps^{-1}$, so we expect small transition layers with thickness of order $\eps$.
Because of this, we can think of $\Omega$ as split into subdomains on
which $u_\eps (\cdot, t)$ takes approximately the constant values $-1$ and $1$, with boundaries
$\eps$-localized about an interface $\Gamma_\eps(t)$.

The interface is expected to move according to a Hele-Shaw or Mullins-Sekerka problem, 
where circular shaped droplets are stable stationary solutions of the dynamics. 
In \cite{DropletAC} formal derivation suggested a stochastic 
Hele-Shaw problem in the limit in case the noise is of order $\eps$.
There it was also shown that for very small noise, the dynamics is 
well approximated by a deterministic Hele-Shaw problem,
see also \cite{BaYaZh:19}. Also in \cite{YaZh:19} (or \cite{Ch:96} in the deterministic case)
the dynamics of the interface in the sharp interface limit was studied, 
but without obtaining an equation on the interface.
A rigorous discussion of the sharp interface limit in the deterministic case can be found in 
\cite{AlBaCh:94}.

In our result we focus on the almost final stage where the interface is already a single spherical droplet in the domain, 
and thus the only possible dynamics is given by the translation of the droplet, at least as long as the droplet stays away from the boundary. 
The deterministic case was studied in \cite{BubbleCH,CHBoundary}
and it was shown that the droplet moves (in $\eps$) exponentially slow.
Due to noise, we expect here a dominant stochastic motion of the droplet on a faster time-scale than exponentially slow.

As we want to study a single small droplet, the average mass of the solution is close to $\pm1$. 
In this regime an initially constant solution is locally stable, and one has to wait for a large deviation event,
that leads to the nucleation of droplets. See for example \cite{Ba:93,BaFi:93, DBGaWa:10,DBSaWa:16,DBMPWa:05}.

Let us finally remark, that although the result in \cite{BubbleCH,CHBoundary} holds also for three spatial dimensions, 
we focus here on the case of dimension $d=2$ only. 
With our method presented it is straightforward to 
treat the three dimensional case, only the technical details will change. 
Details will be provided in \cite{Schindler:Diss}.
Moreover, the case of the mass-conservative Allen-Cahn equation is similar.
See also \cite{DropletAC} for the motion of a droplet along the boundary, 
or \cite{AlChFu:00,BaJi:14} for the deterministic case.
For the one dimensional case see \cite{AnDBKa:12}.

\subsection{Assumptions on spaces and noise}

We fix the underlying space $H^{-1}(\Omega)$ with scalar product  
$\langle \cdot,\cdot\rangle$ and norm $\|\cdot\|$.

The standard scalar-product in $L^2(\Omega)$ 
is denoted by $(\cdot,\cdot)$ or $\langle \cdot,\cdot\rangle_{L^2}$.
Moreover, we use $\Vert \cdot \Vert_\infty$  for the supremum norm in $C^0$ or $L^\infty$.
 
As the Cahn-Hilliard equation preserves mass, we also consider the
subspace $H^{-1}_0(\Omega)$ of the Sobolev space $H^{-1}(\Omega)$ with zero average.
Recall, that the inner product in $H_0^{-1}(\Omega)$ is given by
\[
\langle \psi, \phi \rangle_{H^{-1}} = \left( (-\Delta)^{-1/2} \psi, (-\Delta)^{-1/2} \phi \right)_{L^2},
\]
where $-\Delta$ is the self-adjoint positive operator defined in 
$L^2_0(\Omega)= \{\phi \in L^2 (\Omega)| \int_\Omega \phi \, \mathrm{d}x = 0 \}$ 
by the negative Laplacian with Neumann boundary conditions.

Let $W$ be a $\CQ$-Wiener process in the underlying Hilbert space 
$H^{-1}(\Omega)$, where $\CQ$ is a symmetric operator 
and $(e_k)_{k \in \N}$ forms a complete 
$H^{-1}(\Omega)$-orthonormal basis of eigenfunctions of $\CQ$
with corresponding non-negative eigenvalues $\alpha_k^2$, i.e.
\[
\CQ e_k = \alpha_k^2 e_k.
\]
It is well known that $W$ is given as a Fourier series in $H^{-1}$
\begin{equation}
W(t) = \sum_{k=1}^\infty \alpha_k \beta_k(t) e_k,
\end{equation}
for a sequence of independent standard real-valued Brownian motions 
$\left\{ \beta_k(t) \right\}_{k \in \N}$, cf. DaPrato and Zabzcyck \cite{DaPrato}.

In order to guarantee mass-conservation of solutions to \eqref{eq},
the process $W$ is supposed to take values in $H^{-1}_0$ only, i.e. it satisfies
\begin{equation}
\int_\Omega W(t,x) \, \mathrm{d}x = 0 \quad \text{for any} \,\, t \geq 0\;.
\tag{N1}
\end{equation}

In order to simplify the presentation,  we rely on It\^o's formula.
Thus we have to assume that the trace of the operator $\CQ$ in $H^{-1}$ is finite, i.e.
\begin{equation}
\trace(\CQ) = \sum_{k=1}^\infty \alpha_k^2 =: \eta_0 < \infty.
\tag{N2}
\end{equation}
Furthermore, let $ \norm{\CQ}$ be the induced $H^{-1}$-operator norm of $\CQ$. 
\begin{equation}
\norm{ \CQ }_{L(H^{-1})}  =: \eta_1.
\tag{N3}
\end{equation}
Note that one always has
\[
  \eta_1 = \norm{ \CQ}_{L(H^{-1})}   \leq \trace (\CQ) = \eta_0.
\]
We assume that the Wiener process and thus $\CQ$ depends on $\eps$, 
and thus the noise strength is defined by either $\eta_0$ or $\eta_1$.

In the sequel for results in $L^2$-spaces, we need also higher regularity of $\CQ$.
For this purpose define the trace of $-\Delta Q$ in $H^{-1}$ by
\[
\trace(- \Delta \CQ) = \sum_{k=1}^\infty \langle - \Delta \CQ e_k, e_k \rangle_{H^{-1}}
= \sum_{k=1}^\infty \alpha_k^2 \norm{e_k}_{L^2}^2 =: \eta_2 < \infty.
\]
Note that $e_k$ was normalized in $H^{-1}$ and not in $L^2$.
\subsection{Outline and main result}
In our main results we rely on the existence of a deterministic slow manifold.
This was already studied in detail in \cite{BubbleCH} or \cite{CHBoundary}, where a deterministic
manifold of approximate solutions was constructed that consists of translations of a droplet state,
see section \ref{sec:SM} for details. Crucial points are the spectral properties of linearized operators 
that allows to show that the manifold is locally attracting.

In the deterministic case solutions  are attracted to an exponentially small
neighborhood of the manifold and follow the manifold until the droplet hits the boundary.
Moreover, the motion of the interface is given by an ordinary differential equation.
In the stochastic case this is quite different.

In section \ref{sec:SDE} we derive the motion along the manifold by projecting 
the dynamics of the stochastic Cahn-Hilliard equation to the manifold.
This is a rigorous description of the motion that involves no approximation.
We will see that sufficiently close to the manifold, the dynamics is in first approximation given
by the projection of the Wiener process onto the slow manifold, 
which is a stochastic equation for the motion of the center of the droplet.

In section \ref{sec:stab} we consider the stochastic stability of the slow manifold 
first in $H^{-1}$ and then in $L^2$. This heavily relies on the deterministic stability 
and on small noise, but as both the equation and the noise strength depends on $\eps$ 
we cannot use standard large deviation results. We use a technical Lemma from \cite{DropletAC}
in order to show that with overwhelmingly high probability one stays close
to the slow manifold for very long times.
Due to the stochastic forcing, we cannot exclude the possibility of rare events 
that will destroy the droplet or nucleate a second droplet.

Also the stability of the manifold holds for any polynomial time scale in $\eps^{-1}$, 
which is much larger than the time scale in which the droplet moves.
So we expect the droplet to hit the boundary at a specific polynomial time scale.

The final section \ref{sec:est} collects technical estimates used throughout the paper.

\section{The slow manifold}
\label{sec:SM}
Our stochastic motion of the droplet is based on the slow manifold constructed in \cite{BubbleCH} 
in the deterministic case.
In this section we collect some important results from \cite{BubbleCH} 
which we need throughout this work. We start with constructing the slow manifold 
$\tilde{\CM}^\rho_\eps$ consisting of translations of a single droplet with radius $\rho>0$
and discuss the spectrum of the linearized Cahn-Hilliard and Allen-Cahn operator afterwards.
These spectral properties are crucial in showing the stochastic stability
of the slow manifold. 

\subsection{Construction of the bubble}

We use  a bounded radially symmetric stationary solution to the Cahn-Hilliard equation 
on the whole space $\R^2$. As this solution (and all its derivatives) decays exponentially fast away from the droplet, its translations serve as 
good approximations for droplets inside the bounded domain.
A function $u \in C^2 (\R^2)$ is such a solution if, and only if, it is radial and satisfies 
\begin{equation}
\eps^2 \Delta u - F^\prime(u) = \sigma, \quad x \in \R^2
\end{equation}
for some constant $\sigma$. 
We also need some condition on monotonicity, in order to ensure that $u$ is a single droplet centered at the origin.

The following proposition, cf. \cite{BubbleCH} Thm. 2.1, concerns the existence of such radial solutions 
of the rescaled PDE
\begin{equation}
 \Delta u - F^\prime(u) = \sigma, \quad x \in \R^2.
\label{e:stat}
\end{equation}

\begin{proposition}
\label{thm:stat}
There exists a number $\bar{\rho} > 0$ and smooth functions $\sigma: (\bar{\rho}, \infty) \to \R$, $U^\star: [0,\infty) \times (\bar{\rho},\infty) \to \R$, such that for each $\rho \in (\bar{\rho},\infty)$, $\sigma(\rho)$ and $u(x,\rho) = U^\star ( \vert x \vert , \rho)$
satisfy equation \eqref{e:stat}. Moreover, $U^\star(r,\rho)$ is increasing in $r$ and
\begin{enumerate}[i)]
\item $\sigma(\rho) = C\rho^{-1} + \CO(\rho^{-2}), \quad \sigma^\prime (\rho) = C \rho^{-2} + \CO(\rho^{-3})$
\item $U^\star(\rho,\rho) = \CO(\rho^{-1})$
\item $ 1 + U^\star(0,\rho) = \CO(\rho^{-1})$
\item $\lim\limits_{ r \to \infty} U^\star (r,\rho) = \alpha(\rho)$, \\
 where $C > 0$ is a constant and $\alpha(\rho)$ denotes the root close to $1$ 
 of the equation $F^\prime(\alpha) + \sigma(\rho) = 0$. 
\item $\alpha(\rho) - U^\star(r, \rho) = \CO( e^{- \nu(\rho)(r - \rho)}), 
 \quad r > \rho, \nu(\rho) =( F''(\alpha(\rho)))^{\frac12}$ \\
 $U_r^\star(r,\rho) = \CO( e^{-\nu(\rho)(r-\rho)})$
\item Let $U$ be the unique solution of $U'' - F'(U) = 0, \lim_{s \to \infty} U(s) = \pm 1, U(0) = 0.$ Then there exists a constant $C>0$ such that
\[
U^{\star}(r, \rho) = U(r- \rho) + C \rho^{-1} V(r-\rho) + \CO(\rho^{-2}), \quad r-\rho \geq -C\rho,
\]
where $V$ is a bounded function such that
\[
\int_{-\infty}^\infty F'''(U(\eta)) U'(\eta)^2 V(\eta) \, d\eta = 0.
\]
\end{enumerate}
\end{proposition}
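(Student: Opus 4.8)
The plan is to construct the radial solution by a perturbation argument around the one-dimensional standing wave $U$, treating the curvature term $\tfrac1r U_r^\star$ and the Lagrange-type constant $\sigma$ as small corrections of order $\rho^{-1}$. First I would pass to the radial variable, so that \eqref{e:stat} becomes the ODE $U_{rr}^\star + \tfrac1r U_r^\star - F'(U^\star) = \sigma$ on $(0,\infty)$ with $U_r^\star(0)=0$. Rescaling by $s = r-\rho$ and writing $W(s,\rho) = U^\star(\rho+s,\rho)$ turns this into $W_{ss} + \tfrac{1}{\rho+s} W_s - F'(W) = \sigma$, which for $\rho$ large is a regular perturbation of the planar profile equation $W_{ss} - F'(W) = 0$ solved by $U(s)$. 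Since $F$ has nondegenerate equal-depth minima at $\pm 1$, the heteroclinic $U$ exists, is unique up to translation (normalized by $U(0)=0$), is strictly increasing, and $U(s) \mp 1 = \CO(e^{-\nu|s|})$ with $\nu = \sqrt{F''(\pm 1)}$; this is classical phase-plane analysis.

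The main step is a Lyapunov–Schmidt reduction. The linearization of the planar equation about $U$ is $L := \partial_s^2 - F''(U(s))$, a Schrödinger-type operator on $\R$ whose kernel is spanned by $U'$ (the translation mode), with $U' > 0$ so $0$ is the bottom of the spectrum and is simple, and the rest of the spectrum is bounded away from $0$. To solve the perturbed equation I would look for $W = U(s) + \rho^{-1} V(s) + \text{remainder}$ and $\sigma = \rho^{-1}\sigma_1 + \dots$; matching the $\rho^{-1}$ terms gives $L V = \sigma_1 + U'(s)\cdot(\text{something from }\tfrac1\rho W_s)$, i.e. an inhomogeneous equation $LV = g$ that is solvable in the space orthogonal (in the weighted sense making $L$ self-adjoint with $U'$-weight, or directly in $L^2$) to $U'$ precisely when the Fredholm condition $\int g\, U' \, ds = 0$ holds. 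Working out $g$, this solvability condition fixes the constant $C$ in $\sigma(\rho) = C\rho^{-1} + \CO(\rho^{-2})$; one finds $C>0$ because the relevant integral $\int (U')^2\,ds$ is positive (this is the discrete analogue of the Gibbs–Thomson relation $\sigma \sim$ curvature). The normalization $\int F'''(U)(U')^2 V \, d\eta = 0$ in (vi) is exactly the statement that $V$ is chosen orthogonal to the kernel direction in the next order of the expansion, pinning down $V$ uniquely. Then a contraction-mapping / implicit-function-theorem argument in a weighted space (weight $e^{\nu|s|/2}$, say) on the full equation closes the expansion and yields the $\CO(\rho^{-2})$ error as well as the exponential decay estimates in (v), since $L - F''(U)$ restricted to the orthogonal complement has a bounded inverse with exponentially localizing kernel.

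Given $W$, the remaining assertions are bookkeeping: $\alpha(\rho)$ is defined as the root of $F'(\alpha)+\sigma(\rho)=0$ near $1$, so $\alpha(\rho) = 1 + \CO(\sigma(\rho)) = 1 + \CO(\rho^{-1})$ by the implicit function theorem, giving (iv); evaluating the expansion at $s=0$ (i.e. $r=\rho$) gives $U^\star(\rho,\rho)=U(0)+\CO(\rho^{-1})=\CO(\rho^{-1})$, which is (ii); letting $s \to -\rho$ (i.e. $r \to 0$) and using that $U(-\rho) \to -1$ exponentially gives $1 + U^\star(0,\rho) = \CO(\rho^{-1})$, which is (iii); and $\sigma'(\rho) = -C\rho^{-2} + \dots$—wait, with sign $\sigma'(\rho) = C\rho^{-2}+\CO(\rho^{-3})$—follows by differentiating the expansion in $\rho$, which is justified because everything was smooth in $\rho$ by the implicit function theorem, giving (i). Smoothness of $\sigma$ and $U^\star$ in $\rho$, and the existence of the threshold $\bar\rho$ (below which the curvature perturbation is too large for the argument to close, or the droplet ceases to be monotone), come out of the same fixed-point construction. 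The hard part is really the spectral input—establishing that $L$ has exactly a one-dimensional kernel spanned by $U'$ with a spectral gap—together with setting up the correct weighted functional-analytic framework so that the Lyapunov–Schmidt reduction produces uniform-in-$\rho$ estimates; once that is in place the rest is a standard, if lengthy, asymptotic matching. Since this is quoted as Thm. 2.1 of \cite{BubbleCH}, I would cite that source for the detailed estimates and only sketch the reduction here.
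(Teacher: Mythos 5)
The paper itself gives no proof of this proposition: it is quoted verbatim (``cf.\ \cite{BubbleCH} Thm.\ 2.1'') and used as an external input, so there is no internal argument to compare yours against. Judged on its own, your Lyapunov--Schmidt sketch around the planar heteroclinic $U$ is a legitimate and standard way to reconstruct such a result, and the leading-order bookkeeping you describe (Fredholm condition $\int g\,U'\,ds=0$ fixing $\sigma_1=\int (U')^2\,ds\,/\int U'\,ds>0$, i.e.\ the Gibbs--Thomson relation, smoothness in $\rho$ from the implicit function theorem, $\alpha(\rho)=1+\CO(\rho^{-1})$) is the right skeleton.

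Two places in your sketch are genuinely thin rather than just compressed. First, the reduction to the whole-line problem in $s=r-\rho$ is not a ``regular perturbation'' uniformly on the physical domain: the term $\tfrac1{r}U^\star_r$ is singular at $r=0$, and your construction on $s\in\R$ neither lives on $[0,\infty)$ nor enforces $U^\star_r(0,\rho)=0$. One must either work in the radial measure $r\,dr$ on $(0,\infty)$, or glue the interface profile to the nearly constant inner branch (the root of $F'+\sigma$ near $-1$) via a cutoff/matching argument, using that the solution is exponentially flat for $r\le \rho/2$; this is exactly where statement (iii) and the restriction $r-\rho\ge -C\rho$ in (vi) come from, and it does not follow from the whole-line fixed point alone. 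Relatedly, your linear solvability step needs function spaces allowing nonzero limits at $\pm\infty$ (the right-hand side contains the constant $\sigma_1$), i.e.\ one must first subtract the perturbed far-field states $\alpha(\rho)$ and its analogue near $-1$; as written, ``orthogonal to $U'$ in $L^2$'' is not quite the correct framework. Second, the monotonicity of $U^\star$ in $r$, which the proposition asserts and which the rest of the paper uses to know the state is a single droplet, is not addressed at all by your construction; it typically requires an extra argument (maximum principle/sliding, or a shooting construction in the radial ODE that yields monotonicity for free). Also, the normalization $\int F'''(U)(U')^2V\,d\eta=0$ is tied to the second-order solvability condition (it is what makes $\sigma'(\rho)=C\rho^{-2}+\CO(\rho^{-3})$ come out clean), not merely an arbitrary way of fixing $V$ modulo the kernel. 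Since the paper delegates all of this to \cite{BubbleCH}, citing that source, as you propose, is the appropriate resolution, but your sketch should flag these points rather than present the whole-line contraction as closing the argument by itself.
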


Here we used the usual $\mathcal{O}$-notation, 
that a term is $\CO(g(\rho))$ if there exists a constant 
such that the term is bounded by $Cg(\rho)$ for small $\rho>0$.

For a fixed radius $\rho>0$ of the droplets and  a fixed distance $\delta>0$ from the boundary of the domain,
Proposition \ref{thm:stat} assures that we can associate with each center 
$\xi \in \Omega_{\rho + \delta} = \{ \xi : d(\xi , \partial \Omega) > \rho + \delta \}$ 
a droplet, which is a function $u^\xi : \Omega \to \R$
with the following properties:

\begin{enumerate}[a)]
\item It is an almost stationary solution of the Cahn-Hilliard equation in the sense
that it fails to satisfy the equation, or the boundary conditions, by terms which
are of the order $\CO(e^{-c/ \eps})$ (including their derivatives)
\item It jumps from near $-1$ to near $1$ in a thin layer with thickness
of order $\eps$ around the circle of radius $\rho$ and center $\xi$.

\end{enumerate}

For $\eps \ll 1$ we define the droplet state
\begin{equation}
\label{defuxi}
u^\xi(x) = U^\star \left( \frac{\vert x - \xi \vert}{\eps} , \frac{\rho - a^\xi}{\eps}\right), \,\,\, x \in \Omega,
\end{equation}

where the number $a^\xi$ is chosen to be zero at some fixed $\xi_0 \in \Omega_{\rho + \delta}$ and is
determined for generic $\xi \in \Omega_{\rho + \delta}$ by imposing that the “mass” of $u^\xi$ is constant
on $ \Omega_{\rho + \delta}$, i.e.,
\begin{equation}
\label{masscorr}
 \int_\Omega u^\xi \, \mathrm{d}x = \int_\Omega u^{\xi_0}\,\mathrm{d}x , 
 \quad \forall \xi \in  \Omega_{\rho + \delta}\;.
\end{equation}

For example, we choose $\xi_0$ to be a point of maximal distance from the boundary $\partial\Omega$.
We could also fix a small mass and then determine the radius $\rho>0$ 
such that the droplet centered at $\xi_0$ has exactly that mass.

An easy argument based on Proposition \ref{thm:stat} v) shows (cf. Lemma 3.1 in \cite{BubbleCH})  that
\begin{equation}
\label{eq:mass}
a^\xi = \CO(e^{-c/\eps}),
\end{equation}
with similar estimates for the derivatives of $a^\xi$ with respect to $\xi_i$.

\subsection{The Quasi-Invariant Manifold and Equilibria}

In this section we state the construction of a manifold $\tilde{\CM}_\rho^\eps$ of droplets 
of the form $\xi \mapsto u^\xi + v^\xi$, where $v^\xi$ is a tiny perturbation, 
such that $\tilde{\CM}_\rho^\eps$ is an approximate invariant manifold for equation \eqref{eq}.
The construction of $\tilde{\CM}_\rho^\eps$ is made in such
a way that stationary solutions to \eqref{eq} with approximately circular interface are
in $\tilde{\CM}_\rho^\eps$
and can be detected by the vanishing of a vector field $\xi \mapsto c^\xi$.
Here we follow \cite{BubbleCH}.

\begin{theorem} 
\label{thm:slowmf}
Assume that $\rho > 0$ is such that $\Omega_\rho = \{\xi \in \Omega : d (\xi, \partial \Omega) > \rho \}$ is
non-empty and let $\delta > 0$ be a fixed small number. Then there is an $\eps_0 > 0$ such
that, for any $0 < \eps < \eps_0$ there exist $C^1$ functions
\begin{equation}
\xi \mapsto v^\xi \in C^4( \bar{\Omega} ), \,\, \xi \mapsto c^\xi = (c_1^\xi , c_2^\xi) \in \R^2 
\end{equation}
defined in $\Omega_{\rho + \delta}$ and such that $\int_\Omega v^\xi \, \mathrm{d}x = 0$, for which

\begin{enumerate}[(i)]
\item $\norm{v^\xi}_\infty \leq C \eps^{-2} e^{-(v_\eps/\eps)d^\xi}$,
\item $\vert c^\xi \vert \leq C \eps^{-4} e^{-2(v_\eps/\eps)d^\xi}$
\item Similar estimates with $C$ replaced by $C \eps^ {-k}$ , with $k$ the order of differentiation, hold for the derivatives of $v^\xi$, $c^\xi$ with respect to $x$, $\xi$.
\item The function $\tilde{u}^\xi = u^\xi + v^\xi$ satisfies the boundary conditions in (\ref{eq}) and
\begin{equation*}
\CL(\tilde{u}^\xi) = c_1^\xi u_1^\xi + c_2^\xi u_2^\xi,
\end{equation*}
where $\CL(\Phi) = \Delta ( - \eps^2 \Delta \Phi + F^\prime (\Phi) )$ and $u_i^\xi$ is the derivative of $u^\xi$ with respect to $\xi_i$, $i = 1, 2$.
\item Let $\tilde{\CM}_\rho^\eps \subset C^0(\bar{\Omega})$ be the two-dimensional manifold
\begin{equation*}
\tilde{\CM}_\rho^\eps = \left\{ u = \tilde{u}^\xi : \xi \in \Omega_{\rho + \delta} \right\} 
\end{equation*}
and let $\tilde{\CN}_\eta \subset C^0(\bar{\Omega})$ be the open neighbourhood of $\tilde{\CM}_\rho^\eps$ defined by
\begin{equation*}
\tilde{\CN}_\eta = \{ u : \exists \xi \in \Omega_{\rho + \delta}, w \in C^0 ( \bar{\Omega}), \norm{w}_\infty < C \eps^\eta, u = \tilde{u}^\xi + w \}.
\end{equation*}
 Then there is a sufficiently small 
$\eta > 0$ such that $u \in \tilde{\CN}_\eta$ is an
equilibrium of (\ref{eq}) if and only if
\begin{equation*}
u = \tilde{u}^\xi, \,\,\, c^\xi = 0
\end{equation*}
for some $\xi \in \Omega_{\rho + \delta}$.
\end{enumerate}
\end{theorem}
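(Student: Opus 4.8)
The plan is a Lyapunov--Schmidt reduction around the frozen droplet $u^\xi$ of \eqref{defuxi}. The first step is to quantify the residual $\CL(u^\xi)$: since $U^\star$ solves the rescaled stationary equation \eqref{e:stat} \emph{exactly} on all of $\R^2$ and decays exponentially with all its derivatives by Proposition~\ref{thm:stat}~v), the only error comes from truncating the profile to $\Omega$, enforcing the Neumann boundary conditions, and the exponentially small mass shift $a^\xi$ of \eqref{eq:mass}; putting these together one obtains $\norm{\CL(u^\xi)}_\infty \le C\eps^{-m} e^{-(v_\eps/\eps)d^\xi}$, with the same kind of bound for $x$- and $\xi$-derivatives, where $d^\xi$ denotes the distance from the interface of $u^\xi$ to $\partial\Omega$ (this is the quantitative content of property~(a)). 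Writing, for a correction $v$,
\begin{equation*}
\CL(u^\xi + v) = \CL(u^\xi) + \CL'(u^\xi)v + g^\xi(v), \qquad \CL'(u^\xi)v = \Delta\bigl(-\eps^2\Delta v + F''(u^\xi)v\bigr),
\end{equation*}
with $g^\xi$ the superlinear remainder, we seek $v = v^\xi$ in the space $X^\xi$ of zero-average functions $L^2$-orthogonal to the translation modes $u_1^\xi, u_2^\xi$, and $c^\xi \in \R^2$, solving $\CL(u^\xi) + \CL'(u^\xi)v + g^\xi(v) = c_1^\xi u_1^\xi + c_2^\xi u_2^\xi$ together with the boundary conditions.

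The decisive ingredient is the spectral analysis of $\CL'(u^\xi)$ from \cite{BubbleCH}: apart from the approximate kernel $\mathrm{span}\{u_1^\xi, u_2^\xi\}$ --- coming from the translation invariance of \eqref{e:stat} on $\R^2$, while the remaining near-zero ``radial'' direction $\partial_\rho u^\xi$ is removed by the mass constraint \eqref{masscorr} --- the operator $\CL'(u^\xi)$ is boundedly invertible on $X^\xi$, with inverse controlled by a fixed power of $\eps^{-1}$. Let $\Pi^\xi$ be the $L^2$-projection onto $\mathrm{span}\{u_1^\xi, u_2^\xi\}$, $\Pi^\xi_\perp = I - \Pi^\xi$, and $A^\xi := \bigl(\Pi^\xi_\perp\CL'(u^\xi)\bigr)\big|_{X^\xi}: X^\xi \to X^\xi$, invertible by the above. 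The $\Pi^\xi_\perp$-component of the equation is the fixed-point problem $v = -(A^\xi)^{-1}\Pi^\xi_\perp\bigl(\CL(u^\xi) + g^\xi(v)\bigr)$, which is a contraction on the ball of radius $C\eps^{-2}e^{-(v_\eps/\eps)d^\xi}$ in a suitable H\"older (or Sobolev) norm once $\eps_0$ is small --- the source is that small and $g^\xi$ is quadratic --- giving the unique $v^\xi$, with $C^4$-regularity and bound~(i) recovered by elliptic regularity. The $\Pi^\xi$-component then \emph{defines} $(c_1^\xi, c_2^\xi)$ as the coordinates of $\Pi^\xi\bigl(\CL(u^\xi) + \CL'(u^\xi)v^\xi + g^\xi(v^\xi)\bigr)$ in $\{u_1^\xi, u_2^\xi\}$; a more careful expansion shows $c^\xi$ is quadratic in the exponentially small residual --- the linear-in-residual term cancels by the symmetry of the droplet and the near-kernel identity $\CL'(u^\xi)u_i^\xi = \CO(e^{-(v_\eps/\eps)d^\xi})$ --- which produces the sharper bound~(ii). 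Items (iii)--(iv) then follow by differentiating the fixed-point identity in $x$ and $\xi$ (the implicit function theorem applies, as everything depends smoothly on $\xi$), each derivative costing at most one extra power of $\eps^{-1}$ from the $\eps$-scaled exponentials, the boundary conditions being built into $X^\xi$ and into the profile.

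For~(v), let $u \in \tilde{\CN}_\eta$ be an equilibrium, $\CL(u) = 0$. By an implicit-function argument one first selects $\xi \in \Omega_{\rho+\delta}$ so that $w := u - \tilde{u}^\xi \in X^\xi$ and $\norm{w}_\infty < C\eps^\eta$. Then, using (iv), $0 = \CL(\tilde{u}^\xi + w) = c_1^\xi u_1^\xi + c_2^\xi u_2^\xi + \CL'(\tilde{u}^\xi)w + g^\xi(w)$; applying $\Pi^\xi_\perp$ gives $\Pi^\xi_\perp\bigl(\CL'(\tilde{u}^\xi)w + g^\xi(w)\bigr) = 0$, and since $\CL'(\tilde{u}^\xi)$ inherits the spectral gap of $\CL'(u^\xi)$ on $X^\xi$ while $w$ is small, this forces $w = 0$, i.e.\ $u = \tilde{u}^\xi$; substituting back, $0 = c_1^\xi u_1^\xi + c_2^\xi u_2^\xi$, and because $u_1^\xi, u_2^\xi$ are linearly independent (they are, up to exponentially small terms, the $L^2$-orthogonal translates of $U_r^\star$ with angular factors $\cos$ and $\sin$) we get $c^\xi = 0$; the converse is immediate from (iv). The step I expect to be the main obstacle is exactly the uniform-in-$\eps$ control of the fourth-order operator $\CL'(u^\xi) = \Delta\bigl(-\eps^2\Delta + F''(u^\xi)\bigr)$ on the mass- and translation-corrected complement $X^\xi$ --- establishing the spectral gap with an inverse bounded by a fixed power of $\eps^{-1}$ --- and then carrying the precise rate $v_\eps/\eps$ and the polynomial $\eps$-prefactors cleanly through the contraction and all its $x$- and $\xi$-derivatives; this is exactly where the spectral results of \cite{BubbleCH} and the role of \eqref{masscorr} in killing the radial near-kernel are indispensable.
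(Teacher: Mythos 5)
The paper itself contains no proof of this theorem: Theorem \ref{thm:slowmf} is imported verbatim from \cite{BubbleCH} (``Here we follow \cite{BubbleCH}''), so the only meaningful comparison is with the construction carried out there. Your Lyapunov--Schmidt sketch does match that construction in outline: one corrects the frozen droplet $u^\xi$ of \eqref{defuxi} by a small $v^\xi$ lying (approximately) orthogonal to the translation modes, uses the spectral gap of the linearization on the complement of the near-kernel to invert, treats $c^\xi$ as the Lagrange-multiplier coordinates along $u_1^\xi,u_2^\xi$, and obtains regularity and $\xi$-smoothness from elliptic theory and the implicit function theorem. So as a roadmap it is the right one, and the ingredient you single out as critical (uniform invertibility of the fourth-order linearization on the mass- and translation-corrected complement) is indeed the crux in \cite{BubbleCH}.

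Two of your justifications are, however, off the mark. First, the mechanism behind the quadratic bound (ii) is not a symmetry cancellation of the linear-in-residual term. Since $U^\star$ solves \eqref{e:stat} exactly up to the constant $\sigma$, which is annihilated by the outer Laplacian, $\CL(u^\xi)$ vanishes identically in the interior; the entire residual sits in the exponentially small violation of the Neumann conditions near $\partial\Omega$. The improved exponent $e^{-2(v_\eps/\eps)d^\xi}$ comes from pairing this boundary-localized residual (size $e^{-(v_\eps/\eps)d^\xi}$) with the translation modes $u_i^\xi$, which are themselves of size $e^{-(v_\eps/\eps)d^\xi}$ near $\partial\Omega$ because $U^\star_r$ decays away from the interface: the product of the two localizations, not a cancellation, yields the squared exponent. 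Second, your argument for (v) does not close as stated: from invertibility with an $\eps^{-k}$ bound and a quadratic remainder you get $\norm{w}\leq C\eps^{-k}\norm{w}^2$, which forces $w=0$ only when $\norm{w}\lesssim \eps^{k}$, whereas the neighbourhood $\tilde{\CN}_\eta$ has radius $\eps^\eta$ with $\eta>0$ \emph{small}, i.e.\ much larger than any such fixed power. Bridging this gap requires the finer $L^\infty$/elliptic bootstrap and the detailed spectral analysis of \cite{BubbleCH}; it cannot be read off from the contraction estimate alone. Neither issue changes the overall strategy, but both are exactly the quantitative points the cited proof spends its effort on.
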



\subsection{Spectral estimates for the linearized  operators}

An essential point in the stochastic stability are the spectral properties of 
the linearized Cahn-Hilliard and Allen-Cahn operator.
We consider the linearization around 
any droplet state in our slow manifold,
and it is crucial that eigenfunctions not tangential to the manifold have negative eigenvalues uniformly bounded away from zero, 
while all other eigenvalues have eigenfunctions tangential to the manifold.

\subsubsection{The Cahn-Hilliard operator on \boldmath$H^{-1}_0(\Omega)$}

We study the linearized Cahn-Hilliard operator
\[
\CL^\xi  = \Delta \left( - \eps^2 \Delta + F''( \tilde{u}^\xi ) \right)
\]
in more detail. 
We consider $\CL^\xi$ as an operator on $H_0^{-1}(\Omega)$ and cite a theorem of \cite{SpecEst} below.

As we have exponentially small terms, we use the following definition:
\begin{definition}
We say that a term is of order $\CO(\exp)$ if it is asymptotically exponentially small as $\eps \to 0$,
i.e. of order $\CO( e^{-c/\eps} )$ for some positve constant $c$.
\end{definition}

\begin{theorem}
\label{LinCH}\ 
\begin{enumerate}[(i)]
\item
The operator $\CL^\xi$ can be extended to a self-adjoint operator on $H_0^{-1}$ , the subspace of the Sobolev space $H^{-1}$ consisting of functions with zero average. $−\CL^\xi$ is bounded from below.
\item
Let $\lambda^\xi_1 \leq \lambda^\xi_2 \leq \lambda^\xi_3 \leq \ldots$  be the eigenvalues of
\begin{align*}
\CL^\xi \psi &= \Delta \left( - \eps^2 \Delta \psi + F''(\tilde{u}^\xi) \psi \right) = - \lambda \psi, \quad &x \in \Omega, \\
\frac{\partial \psi}{\partial \eta} &= \frac{\partial \Delta \psi}{\partial \eta} = 0, \quad &x \in \partial \Omega,
\end{align*}
and let $\delta > 0$ be fixed. Then there is $\eps_0 > 0$ and constants $c, C , C' > 0$ independent of $\eps$ such that, for $0 < \eps < \eps_0$ and $\xi \in \Omega_\delta$, the following estimates hold:
\begin{align}
-C e^{-c/\eps} \leq &\lambda_1^\xi \leq \lambda_2^\xi \leq C e^{-c/\eps}, \\
&\lambda_3^\xi \geq C' \eps.
\end{align}
\item
In the two-dimensional subspace $U^\xi$ corresponding to the small eigenvalues
$\lambda_1^\xi , \lambda_2^\xi$ there is an orthonormal basis (in $H^{-1}$) $\psi^\xi_1 , \psi^\xi_2$ such that
\begin{equation}
\label{def:psi}
\psi_i^\xi = \sum_{j=1}^2 a_{ij}^\xi \frac{\tilde{u}_j^\xi}{\| \tilde{u}_j^\xi \| } + \CO(\exp), \quad i =1,2,
\end{equation}
where the matrix $(a_{ij}^\xi)$ is nonsingular and a smooth function of $\xi$ and $\tilde{u}^\xi_j$ is the
derivative of $\tilde{u}^\xi$ with respect to $\xi_j$ . Moreover $\psi_i^\xi$ is a smooth function of $\xi$ and
\begin{equation}
 \| \psi_{i,j} \| = \CO( \eps^{-1} ), \quad i,j = 1,2,
\end{equation}
where $\psi_{i,j}$ is the derivative of $\psi_{i}$ with respect to $\xi_j$.
\end{enumerate}
\end{theorem}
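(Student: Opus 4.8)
The plan is to reduce the problem to the linearized Allen--Cahn operator $\CA^\xi:=-\eps^2\Delta+F''(\tilde u^\xi)$ on $L^2(\Omega)$ via the standard $H^{-1}$--$L^2$ correspondence, and then to exploit the layer structure of $\tilde u^\xi$ (Proposition \ref{thm:stat}) together with the residual bound of Theorem \ref{thm:slowmf}(iv). First I would record that, for $\psi,\phi$ in a suitable dense domain,
\[
\langle -\CL^\xi\psi,\phi\rangle=\big((-\Delta)^{-1/2}(-\Delta)\CA^\xi\psi,\,(-\Delta)^{-1/2}\phi\big)_{L^2}=(\CA^\xi\psi,\phi)_{L^2}=:b^\xi(\psi,\phi),
\]
using $\CL^\xi=-(-\Delta)\CA^\xi$ and self-adjointness of $(-\Delta)^{1/2}$, so that $b^\xi(\psi,\psi)=\eps^2\|\nabla\psi\|_{L^2}^2+(F''(\tilde u^\xi)\psi,\psi)_{L^2}$. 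Since $F''\geq-1$ and $\tilde u^\xi$ is bounded, $b^\xi(\psi,\psi)\geq\eps^2\|\nabla\psi\|_{L^2}^2-C\|\psi\|_{L^2}^2$, and the interpolation $\|\psi\|_{L^2}^2\leq\theta\|\nabla\psi\|_{L^2}^2+C_\theta\|\psi\|^2$ shows that $b^\xi$ is a closed, lower-semibounded symmetric form on $V:=\{\psi\in H^1(\Omega):\int_\Omega\psi\,\mathrm{d}x=0\}$. Its Friedrichs extension is self-adjoint on $H^{-1}_0$, bounded below, and equals $-\CL^\xi$; since $V\hookrightarrow H^{-1}_0$ compactly the spectrum is discrete, which gives (i) and the min--max formula $\lambda_k=\min_{\dim V_k=k}\max_{0\neq\psi\in V_k}b^\xi(\psi,\psi)/\|\psi\|^2$.

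For the upper bound in (ii) I would evaluate the min--max quotient on the two-dimensional space $W:=\mathrm{span}\{\tilde u_1^\xi,\tilde u_2^\xi\}$. Differentiating the identity $\CL(\tilde u^\xi)=c_1^\xi u_1^\xi+c_2^\xi u_2^\xi$ from Theorem \ref{thm:slowmf}(iv) in $\xi_j$ gives $\CL^\xi\tilde u_j^\xi=\partial_{\xi_j}(c_1^\xi u_1^\xi+c_2^\xi u_2^\xi)$, which by the bounds of Theorem \ref{thm:slowmf}(ii)--(iii) and $d^\xi\geq\delta>0$ is $\CO(\exp)$ in $H^{-1}$. Hence $|b^\xi(\tilde u_j^\xi,\tilde u_k^\xi)|=|\langle-\CL^\xi\tilde u_j^\xi,\tilde u_k^\xi\rangle|=\CO(\exp)$, whereas the Gram matrix $(\langle\tilde u_j^\xi,\tilde u_k^\xi\rangle)$ is uniformly positive definite (its entries, and the mutual near-orthogonality of the two translation modes, follow from Proposition \ref{thm:stat} after rescaling). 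Thus $\max_{0\neq\psi\in W}b^\xi(\psi,\psi)/\|\psi\|^2=\CO(\exp)$, so $\lambda_1\leq\lambda_2\leq Ce^{-c/\eps}$.

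The core of the argument, and the step I expect to be the main obstacle, is a coercivity estimate on the complement of the translation modes: one needs $C'>0$ with
\[
b^\xi(\psi,\psi)\ \geq\ C'\eps\,\|\psi\|^2\qquad\text{for all }\psi\perp_{H^{-1}}W .
\]
This is a spectral-gap statement for the linearized Allen--Cahn energy measured in the $H^{-1}$ metric. Proving it requires the fine structure of the transition layer: split $\psi$ into a part supported in an $\CO(\eps)$-tube around $\{|x-\xi|=\rho\}$ and a bulk part; on the tube $\CA^\xi$ behaves like $-\partial_s^2+F''(U)$ (whose kernel is spanned by $U'$, the translation direction, with an $\CO(1)$ gap above it) tensored with the tangential Laplacian along the circle, while in the bulk $F''(\tilde u^\xi)\approx F''(\pm1)>0$; the factor $\eps$ comes from comparing the $L^2$- and $H^{-1}$-norms of interface-concentrated functions. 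This is a Carr--Pego / Alikakos--Bates--Fusco / Chen-type computation carried out in \cite{SpecEst}, which I would import. Granting it, an abstract perturbation lemma finishes (ii): decomposing $H^{-1}_0=W\oplus W^\perp$, the form $b^\xi$ has $\CO(\exp)$ norm on $W$, coercivity $\geq C'\eps$ on $W^\perp$, and $\CO(\exp)$ cross terms (again because $\CL^\xi\tilde u_j^\xi=\CO(\exp)$), so min--max gives $\lambda_3\geq C'\eps-\CO(\exp)\geq\tfrac12C'\eps$ and $|\lambda_1|,|\lambda_2|\leq\CO(\exp)\leq Ce^{-c/\eps}$, which also upgrades the crude lower bound from (i).

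For (iii) I would work with the Riesz spectral projection $P^\xi=\frac{1}{2\pi i}\oint_{\partial B_r}(\CL^\xi+z)^{-1}\,\mathrm{d}z$ onto $\{\lambda_1^\xi,\lambda_2^\xi\}$, with $r\sim\eps$ separating these from the rest of the spectrum. Since $\CL^\xi$ depends smoothly on $\xi$ (through $u^\xi$ and $v^\xi$, with the $\xi$-derivative bounds of Theorems \ref{thm:stat} and \ref{thm:slowmf}), $P^\xi$ and its range $U^\xi$ are smooth in $\xi$. The bound $\CL^\xi\tilde u_j^\xi=\CO(\exp)$ together with the gap $\gtrsim\eps$ gives, via a $\sin\Theta$-type estimate, that $U^\xi$ is $\CO(\exp)$-close in $H^{-1}$ to $W$; Gram--Schmidt inside $U^\xi$ then produces an $H^{-1}$-orthonormal basis with $\psi_i^\xi=\sum_j a_{ij}^\xi\,\tilde u_j^\xi/\|\tilde u_j^\xi\|+\CO(\exp)$, where $(a_{ij}^\xi)$ is nonsingular (its singular values are controlled by the uniform independence of the normalized translation modes) and smooth in $\xi$. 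Finally, differentiating $P^\xi$ and the Gram--Schmidt relations in $\xi_j$ and using that each $\xi$-derivative of the droplet data costs a factor $\eps^{-1}$ yields $\|\psi_{i,j}^\xi\|=\CO(\eps^{-1})$.
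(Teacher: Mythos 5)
Your plan is essentially sound, and for the substantive content it follows the same route as the paper: this theorem is taken from \cite{SpecEst}, and the only part the paper argues in any detail is (iii), which it gets from exactly the two ingredients you use — the exponentially small residual $\CL^\xi\tilde u^\xi_j=\CO(\exp)$ (obtained, as you do, from Theorem \ref{thm:slowmf}(iv) and the derivative bounds on $c^\xi$, $v^\xi$) plus the $\CO(\eps)$ spectral gap — fed into a subspace-perturbation lemma. The difference is packaging: the paper invokes the Helffer--Sj\"ostrand-type Theorem \ref{H-Sj} with $E=\mathrm{span}\{\tilde u^\xi_1/\|\tilde u^\xi_1\|,\tilde u^\xi_2/\|\tilde u^\xi_2\|\}$, $a=\eps^2$, and then shows nonsingularity of the coefficient matrix via $BB^T=(\langle\tilde u^\xi_i,\tilde u^\xi_j\rangle)+\CO(\exp)$, whereas you use a Riesz projection and a $\sin\Theta$ estimate; these are equivalent tools, and your Gram--Schmidt step plays the role of the paper's $BB^T$ computation. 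What you add beyond the paper is a form-theoretic proof of (i) (the identification of the $H^{-1}$ form of $-\CL^\xi$ with the Allen--Cahn form is Remark \ref{rem:MCAC}) and the min--max upper bound in (ii); what you share with the paper is the decision to import the hard interface coercivity from \cite{SpecEst}. Two cautions: first, make sure the imported statement is the coercivity on the $H^{-1}$-orthogonal complement of $\mathrm{span}\{\tilde u^\xi_1,\tilde u^\xi_2\}$ (as in the Alikakos--Fusco-type analysis) rather than the bare eigenvalue bounds, since deriving the former from the latter already presupposes the conclusion of (iii); second, your final claim $\|\psi_{i,j}\|=\CO(\eps^{-1})$ is only gestured at — naively differentiating the resolvent with a contour at distance $\sim\eps$ and $\partial_{\xi_j}\CL^\xi$ involving $\tilde u^\xi_j$ does not obviously produce $\eps^{-1}$ — but this bound is likewise not proved in the paper and is part of what is cited from \cite{SpecEst}, so it is not a gap relative to the paper's own treatment.
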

As we will need the statement in more detail later,
we will comment on the proof of (iii). 
The main ingredient is the following theorem. For its proof we refer to \cite{H-Sj}. 
\begin{theorem} 
\label{H-Sj}

Let $A$ be a selfadjoint operator on a Hilbert space $\CH$, $I$ a compact interval in $\R$, $\{ \psi_1, \ldots, \psi_N \}$ linearly independent normalized elements in $\mathcal{D}(A)$. We assume
\begin{enumerate}[(i)]
\item
\begin{align*}
\left\{
\begin{array}{l l}
A \psi_j &= \mu_j \psi_j + r_j, \quad \| r_j \| < \eps' \\
&\mu_j \in I, \quad j= 1,\ldots, N
\end{array}
\right\}
\end{align*}
\item
There is a number $a > 0$ such that $I$ is $a$-isolated in the spectrum of $A$:
\[
 \left( \sigma(A) \setminus I \right) \cap \left( I + (-a,a) \right) = \emptyset.
\]
\end{enumerate}
Then
\[
\bar{d}(E,F) := \sup_{ \phi \in E, \| \phi \| = 1} d(\phi,F) \leq \frac{\sqrt{N} \eps'}{a \sqrt{\lambda_{min}}},
\]
where
\begin{align*}
&E = \text{span} \{ \psi_1, \ldots , \psi_N \}, \\
&F = \text{closed subspace associated to } \sigma(A) \cap I, \\
&\lambda_{min} = \text{smallest eigenvalue of the matrix } \left( \langle \psi_i, \psi_j \rangle \right)_{i,j=1,\ldots,N}.
\end{align*}
\end{theorem}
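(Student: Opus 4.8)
\textbf{Proof strategy for Theorem \ref{H-Sj}.}
The plan is to apply the abstract Hilbert--Sjöstrand-type quasimode argument directly: from the near-eigenfunction relations in hypothesis (i) together with the spectral gap in (ii), deduce that the approximate eigenspace $E$ must lie close to the true spectral subspace $F$. First I would introduce the spectral projection $P = \mathbf{1}_I(A)$ onto $F$ given by the functional calculus (or equivalently a Riesz projection around $I$), and decompose each $\psi_j = P\psi_j + (I-P)\psi_j$. The core estimate is that $(I-P)\psi_j$ is small: applying $A - \mu_j$ to $\psi_j$ gives $r_j$, and since $A - \mu_j$ restricted to the range of $I-P$ is invertible with norm bounded by $1/a$ (because $\mu_j \in I$ and $\sigma(A)$ avoids an $a$-neighborhood of $I$ outside $I$ itself), one gets $\|(I-P)\psi_j\| = \|(A-\mu_j)^{-1}(I-P) r_j\| \le \|r_j\|/a < \eps'/a$.

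Next I would promote this componentwise bound to a bound on the whole subspace $E$. Take an arbitrary unit vector $\phi = \sum_j c_j \psi_j \in E$. Then $d(\phi, F) \le \|(I-P)\phi\| \le \sum_j |c_j| \|(I-P)\psi_j\| < (\eps'/a)\sum_j |c_j|$. By Cauchy--Schwarz, $\sum_j |c_j| \le \sqrt{N}\,|c|$ where $|c|^2 = \sum_j |c_j|^2$. It remains to control $|c|$ in terms of $\|\phi\| = 1$: writing $G = (\langle \psi_i,\psi_j\rangle)_{i,j}$ for the Gram matrix, we have $1 = \|\phi\|^2 = c^{*} G c \ge \lambda_{\min}|c|^2$, hence $|c| \le \lambda_{\min}^{-1/2}$. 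Combining the three inequalities yields $d(\phi,F) \le \sqrt{N}\,\eps'/(a\sqrt{\lambda_{\min}})$, and taking the supremum over unit $\phi \in E$ gives exactly the claimed bound on $\bar d(E,F)$.

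The main obstacle — really the only genuinely delicate point — is justifying the operator inequality $\|(A-\mu_j)^{-1}(I-P)\| \le 1/a$ when $A$ is merely selfadjoint and possibly unbounded, so that $\psi_j \in \mathcal{D}(A)$ but $E$ and $F$ need not be. This is handled cleanly by the spectral theorem: on the reducing subspace $\mathrm{Ran}(I-P)$ the operator $A$ has spectrum contained in $\sigma(A)\setminus I$, which by hypothesis (ii) lies at distance $\ge a$ from every $\mu_j \in I$, so the resolvent bound is immediate from the spectral measure. One should also note that $r_j = (A-\mu_j)\psi_j$ lies in $\mathcal{D}$-free territory but $(I-P)r_j$ is automatically in $\mathrm{Ran}(I-P)$ and in the domain of the (bounded on that subspace) inverse, so all manipulations are legitimate; no compactness of $I$ beyond ensuring $P$ is a genuine spectral projection is actually needed, though it is convenient for the Riesz-projection viewpoint.
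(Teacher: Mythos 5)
Your argument is correct and complete: the resolvent bound $\|(A-\mu_j)^{-1}(I-P)\|\le 1/a$ on the reducing subspace $\mathrm{Ran}(I-P)$, the componentwise estimate $\|(I-P)\psi_j\|\le \eps'/a$, and the Gram-matrix step $1=c^{*}Gc\ge\lambda_{\min}|c|^2$ combined via Cauchy--Schwarz give exactly the stated bound. The paper itself gives no proof of Theorem \ref{H-Sj} (it refers to the cited work of Helffer--Sj\"ostrand), and your argument is precisely the standard spectral-projection proof from that reference, so there is nothing further to compare.
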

In our case we take $E = \text{span}\{ \frac{\tilde{u}^\xi_1}{\| \tilde{u}^\xi_1 \|}, \frac{\tilde{u}^\xi_2}{\| \tilde{u}^\xi_2 \|} \}$, 
$I = \left[ - C e^{-c/\eps}, C e^{-c/ \eps} \right]$, such that $\sigma(A) \cap I = \{ \lambda^\xi_1, \lambda^\xi_2 \}$, and $a = \eps^2$. According to theorem \ref{LinCH}(ii) the spectral gap is of order $\eps$ and therefore $I$ is $a$-isolated.

Let us now discuss that the eigenvectors corresponding to the smallest eigenvalues approximate well 
the tangent space of the slow manifold.
First, the droplet state is an approximate solution, 
so for its derivative $ \tilde{u}^\xi_j $ (which is a tangent vector) we have
\[ 
\CL^\xi \frac{\tilde{u}^\xi_j}{\| \tilde{u}^\xi_j \|} = r_j \quad \text{with } \| r_j \| = \CO(\exp).
\]
Since the matrix $\left( \langle \tilde{u}^\xi_i, \tilde{u}^\xi_j \rangle \right)$ approaches a nonsingular limit as $\eps \to 0$
(see e.g. \eqref{e:Muxi}), 
we also have $ |\lambda_{min}| > C > 0$. For $i \in \{1,2\}$ we denote the associated eigenvector to $\lambda^\xi_i$ by $\psi^\xi_i$ and define $F = \text{span} \{\psi^\xi_1,\psi^\xi_2 \}$ . Theorem \ref{H-Sj} is applicable and yields
\[
\bar{d}(E,F) := \sup_{ \phi \in E, \| \phi \| = 1} d(\phi,F) = \CO( \exp ).
\]
Thus, $\psi^\xi_i \in E + \CO(\exp)$ and one can write
\begin{equation}
\label{e:defa}
\psi_i^\xi = \sum_{j=1}^2 a_{ij}^\xi \frac{\tilde{u}_j^\xi}{\| \tilde{u}_j^\xi \| } + \CO(\exp), \quad i =1,2.
\end{equation}

By definition of the distance $\bar{d}$ we have
\begin{equation*}
\frac{\tilde{u}_j^\xi}{\| \tilde{u}_j^\xi \| } 
 = \sum_k \langle \frac{\tilde{u}_j^\xi}{\| \tilde{u}_j^\xi \| }, \psi^\xi_k \rangle \, \psi^\xi_k + \CO(\exp).
\end{equation*}
Noting $\| \tilde{u}_j^\xi \| \leq C \rho$ we get by multiplying
\begin{equation*}
\tilde{u}_j^\xi 
 = \sum_k \langle \tilde{u}_j^\xi, \psi^\xi_k \rangle \, \psi^\xi_k + \CO(\exp).
\end{equation*}
It remains to show that the matrix $B(\xi)$ defined by $B_{jk}(\xi) = \langle \tilde{u}_j^\xi, \psi^\xi_k \rangle$ 
is invertible. This can be seen as follows:
\begin{align*}
\langle \tilde{u}_i^\xi, \tilde{u}_j^\xi \rangle 
&= \Big\langle \sum_k B_{ik}(\xi) \psi^\xi_k , \sum_l B_{jl}(\xi) \psi^\xi_l \Big\rangle + \CO(\exp) \\
&= \sum_{k,l} B_{ik}(\xi) B_{jl}(\xi) \langle \psi^\xi_k , \psi^\xi_l \rangle + \CO(\exp) \\
&= \sum_k B_{ik}(\xi) B_{jk}(\xi) + \CO(\exp) = \left( B \cdot B^T \right)_{ij} + \CO(\exp)
\end{align*}
Therefore invertibility of $B$ is equivalent to the invertibility of the matrix defined by $\langle \tilde{u}_i^\xi, \tilde{u}_j^\xi \rangle$ which is already proven. 

\begin{remark}
Note that Theorem \ref{LinCH} is restricted to the two-dimensional case. While the construction of an orthonormal basis as in (iii) is the same, thus far, for 
$d=3$ it can be shown that the spectral gap is  only  of order $\CO(\eps^2)$. This heavily influences our analysis of stochastic stability and any improvement
of this result will yield a better region of stability in the three-dimensional setting.
\end{remark}

\subsubsection{The mass-conserving Allen-Cahn operator on \boldmath$L^2_0(\Omega)$.}

Next, we collect some results on the eigenvalue problem for the mass-conserving Allen-Cahn equation 
linearized around $\tilde{u}^\xi$, for small  $0<\eps \ll 1$,

\begin{align}
&\CA^\xi \phi = \eps^2 \Delta \phi - F''(\tilde{u}^\xi) \phi - \frac{1}{\vert \Omega \vert}
\int_\Omega F''(\tilde{u}^\xi)\, \phi \, \mathrm{d}x = - \mu \phi, \quad &x \in \Omega, \nonumber \\
&\frac{\partial \phi}{\partial \eta} = 0, \quad &x \in \partial \Omega
\label{massAC}
\end{align}

on $L^2(\Omega)$. Here as defined previously $\tilde{u}^\xi$ is the bubble state, 
which is an element of the slow manifold. 

\begin{theorem}
\label{LinAC}
Let $\tilde{u}^\xi \in \tilde{\CM}_\rho^\eps$ and let $\mu_1 \leq \mu_2 \leq \mu_3 \leq \ldots$ be the eigenvalues of \eqref{massAC}. Then there is
$\eps_0$ such that for $\eps < \eps_0$
\begin{align}
\mu_1, \mu_2 = \CO(\exp) \\
\mu_3 > C\eps^2.
\end{align}
The two-dimensional space $W^\xi$ spanned by the eigenfunctions corresponding to the 
eigenvalues $\mu_1, \mu_2$ can be represented by 
\[
W^\xi = \mathrm{span} \left\{ h_1^\xi, h_2^\xi \right \}
\]
and 
\begin{equation}
\norm{ h_i^\xi - \frac{u_i^\xi}{\norm{u_i^\xi}}}_{L^2} = \CO(\exp).
\label{ACeigenfunction}
\end{equation}
\end{theorem}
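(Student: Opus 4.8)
The plan is to mirror, in the $L^2$ setting, the argument that was just carried out for the Cahn--Hilliard operator on $H^{-1}_0$, using the abstract perturbation theorem (Theorem~\ref{H-Sj}) as the workhorse. First I would establish that the tangent vectors $u_i^\xi$ (equivalently $\tilde{u}_i^\xi$, since $v^\xi$ is exponentially small together with its $\xi$-derivatives by Theorem~\ref{thm:slowmf}) are \emph{approximate eigenfunctions} of $\CA^\xi$ with exponentially small eigenvalue: differentiating the relation $\CL(\tilde{u}^\xi) = c_1^\xi u_1^\xi + c_2^\xi u_2^\xi$ with respect to $\xi_i$, or rather starting from the fact that $\tilde{u}^\xi$ is an almost-stationary solution of the (mass-conserving) equation, one gets $\bigl(\eps^2 \Delta - F''(\tilde{u}^\xi)\bigr)\tilde{u}_i^\xi = \CO(\exp)$ in $L^2$; the extra nonlocal term $-\frac{1}{|\Omega|}\int_\Omega F''(\tilde{u}^\xi)\,\tilde{u}_i^\xi\,\mathrm{d}x$ is itself $\CO(\exp)$ because it is, up to sign, the mass-correction identity $\int_\Omega \tilde{u}^\xi\,\mathrm{d}x = \mathrm{const}$ differentiated (cf.\ \eqref{masscorr}, \eqref{eq:mass}). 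Hence $\CA^\xi \frac{\tilde{u}_i^\xi}{\|\tilde{u}_i^\xi\|_{L^2}} = r_i$ with $\|r_i\|_{L^2} = \CO(\exp)$.

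Next I would invoke the spectral-gap statement \eqref{massAC}-type estimate: the bottom of the spectrum of $-\CA^\xi$ consists of $\mu_1,\mu_2 = \CO(\exp)$, well separated from $\mu_3 > C\eps^2$. (This is the part I would cite from the literature on the linearized mass-conserving Allen--Cahn operator around a droplet — the one-dimensional layer analysis plus the curvature correction — rather than reprove; it is the $L^2$ analogue of Theorem~\ref{LinCH}(ii).) Then Theorem~\ref{H-Sj} applies with $\CH = L^2_0(\Omega)$ (restrict to zero-average functions so that the nonlocal term is the orthogonal projection and $\CA^\xi$ is genuinely self-adjoint there), $A = \CA^\xi$, $N=2$, $\psi_i = \tilde{u}_i^\xi/\|\tilde{u}_i^\xi\|_{L^2}$, $I = [-Ce^{-c/\eps}, Ce^{-c/\eps}]$, $a = \tfrac12 C\eps^2$, and $\eps' = \CO(\exp)$. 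The quantity $\lambda_{\min}$ is the smallest eigenvalue of the Gram matrix $(\langle \tilde{u}_i^\xi, \tilde{u}_j^\xi\rangle_{L^2})$, which converges to a nonsingular limit as $\eps \to 0$ after the usual rescaling (the dominant contribution comes from the layer, giving an explicit constant times the $2\times 2$ matrix built from $\int (U')^2$ and the geometry of the circle), so $|\lambda_{\min}| \geq C > 0$. The conclusion $\bar d(E,W^\xi) = \CO(\exp)$ follows, where $W^\xi$ is the eigenspace of $\mu_1,\mu_2$.

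Finally I would extract the representation \eqref{ACeigenfunction}. From $\bar d(E, W^\xi) = \CO(\exp)$ one has, for the $L^2$-orthonormal eigenbasis, $h_i^\xi = \sum_j b_{ij}^\xi \frac{u_j^\xi}{\|u_j^\xi\|_{L^2}} + \CO(\exp)$ with $(b_{ij}^\xi)$ nonsingular; replacing $\tilde u$ by $u$ throughout costs only $\CO(\exp)$ by the estimates on $v^\xi$. To upgrade from "some nonsingular linear combination" to the stated "$h_i^\xi$ equals the normalized $u_i^\xi$ up to $\CO(\exp)$", I would argue that the Gram matrix of the $\frac{u_i^\xi}{\|u_i^\xi\|_{L^2}}$ is itself $I_2 + \CO(\exp)$ — the two tangent directions are asymptotically orthogonal after normalization because the cross term $\int_\Omega u_1^\xi u_2^\xi\,\mathrm{d}x$ integrates an odd function of the layer variable against an even one to leading order, so it vanishes to all polynomial orders — and combine this with orthonormality of the $h_i^\xi$ to pin down $(b_{ij}^\xi)$ as orthogonal and $\CO(\exp)$-close to $\pm\mathrm{Id}$; choosing the labeling/orientation of the $h_i^\xi$ appropriately then gives \eqref{ACeigenfunction}. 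The main obstacle I anticipate is precisely this last normalization step together with the verification that $\|r_i\|_{L^2} = \CO(\exp)$ \emph{in the $L^2$ norm} rather than merely $H^{-1}$: controlling $F''(\tilde u^\xi)\tilde u_i^\xi - (\text{stationary layer terms})$ pointwise requires the refined expansion of $U^\star$ from Proposition~\ref{thm:stat}~vi) and careful bookkeeping of the $\eps^{-k}$ losses from differentiation, so that two spatial derivatives still leave an exponentially small remainder. The spectral-gap input itself, if one wanted it self-contained, would be the genuinely hard analytic core, but here it is taken as known.
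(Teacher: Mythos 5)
Your route is genuinely different from the paper's. The paper disposes of Theorem \ref{LinAC} in two lines: the statement with $u^\xi$ in place of $\tilde u^\xi$ is quoted from \cite{AlBrFu:98}, and since $v^\xi=\tilde u^\xi-u^\xi$ together with its derivatives is exponentially small, the version on the slow manifold follows by an easy perturbation argument. You instead re-derive the eigenspace localization: the tangent vectors as approximate eigenfunctions of $\CA^\xi$ with $\CO(\exp)$ residual, the gap $\mu_3>C\eps^2$ quoted from the literature, and Theorem \ref{H-Sj} on $L^2_0(\Omega)$ to conclude $\bar d(E,W^\xi)=\CO(\exp)$. That is a viable argument and runs exactly parallel to the paper's own treatment of the Cahn--Hilliard operator in Theorem \ref{LinCH}(iii); what it buys is an explicit mechanism behind \eqref{ACeigenfunction} rather than a citation, but it does not make the result self-contained, since the eigenvalue estimates --- the hard analytic core, as you say --- are still imported, so both proofs ultimately lean on \cite{AlBrFu:98}-type input, just for different portions.

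Two details need repair. First, the nonlocal term: $\int_\Omega F''(u^\xi)\,u_i^\xi\,dx$ is indeed $\CO(\exp)$, but not because it is ``the differentiated mass constraint'' --- differentiating \eqref{masscorr} only yields $\int_\Omega u_i^\xi\,dx=0$, not $\int_\Omega F''(u^\xi)u_i^\xi\,dx=0$. The correct reason is the exact radial symmetry of $u^\xi$ about $\xi$ together with the exponential localization of $U^\star_r$: on the annulus around the interface the angular integral of $F''(u^\xi)\,\partial_{\xi_i}u^\xi$ vanishes identically, and the contribution away from the annulus is exponentially small (equivalently, integrate the $\xi_i$-differentiated stationary equation over $\Omega$ and use the divergence theorem, the boundary term being $\CO(\exp)$ since the droplet stays at distance $\delta$ from $\partial\Omega$). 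Second, your final step over-interprets the statement: the $h_i^\xi$ in Theorem \ref{LinAC} are only required to be \emph{some} basis of $W^\xi$, not the orthonormal eigenfunctions. For the actual eigenfunctions with $\mu_1\neq\mu_2$ there is no reason the coefficient matrix should be close to $\pm\mathrm{Id}$ --- it is generically a nontrivial rotation determined by the geometry of $\Omega$ relative to $\xi$, exactly as the general nonsingular matrix $(a^\xi_{ij})$ in \eqref{def:psi} for Cahn--Hilliard --- so the ``pinning down'' you anticipate as the main obstacle cannot be carried out and is also unnecessary: simply define $h_i^\xi$ as the orthogonal projection of $u_i^\xi/\|u_i^\xi\|_{L^2}$ onto $W^\xi$; then $\bar d(E,W^\xi)=\CO(\exp)$ gives \eqref{ACeigenfunction} directly, and the near-orthonormality of the normalized tangent vectors (your symmetry observation, which is correct, though it comes from the angular integral rather than an odd/even layer-variable cancellation) shows these projections are linearly independent and hence span $W^\xi$. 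With those two corrections your argument proves the theorem as stated.
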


This result can be found in \cite{AlBrFu:98} with $\tilde{u}^\xi$ replaced by $u^\xi$. As $v = \tilde{u}^\xi - u^\xi$ 
is exponentially small the theorem follows from an 
easy perturbation argument. 
Also note that for the eigenfunctions of Cahn-Hilliard we thus have by \eqref{ACeigenfunction} and \eqref{e:defa}
\[
\psi_i^\xi = \sum_j \alpha_{ij}^\xi h_j^\xi + \CO(\exp).
\]

\begin{remark}
\label{rem:MCAC}
Defining the projection $Pu = u - \frac{1}{\vert \Omega \vert}\int_\Omega u \, dx$ onto $L^2_0(\Omega) = \left\{ f \in L^2(\Omega) : \int_\Omega f = 0 \right\}$
we see that for $v \in H^{-1}_0$
\begin{align*}
\langle \CL^\xi v,v \rangle_{H^{-1}} &= \langle \eps^2 \Delta v - F''(\tilde{u}^\xi) v, Pv \rangle_{L^2} \\
&=  \langle P \circ (\eps^2 \Delta  - F''(\tilde{u}^\xi)) v, v \rangle_{L^2} \\
&= \langle \CA^\xi v,v \rangle_{L^2}\;.
\end{align*}
Therefore, for all $v \perp_{H^{-1}} \psi_i^\xi$ we have 
\[
\langle \CL^\xi v,v \rangle_{H^{-1}} \leq -C \eps^2 \norm{v}_{L^2}^2,
\]
which is crucial for establishing stability.
\end{remark}

\section{Motion along the slow manifold : The dynamics of bubbles}
\label{sec:SDE}

Here we follow the approach to split  
the dynamics into the motion along the manifold and othogonal to it.

\subsection{The new coordinate system}

We will use the standard projection onto the manifold. A minor technical difficulty 
is that the eigenfunctions $\psi^\xi_1$ and $\psi^\xi_2$ of the linearization 
do not span the tangent space at a given point $\tilde{u}^\xi$ on the slow manifold.
But as the difference to the true tangent space, which is spanned by the partial derivatives  $\partial_{\xi_1 }\tilde{u}^\xi$ and $\partial_{\xi_2 }\tilde{u}^\xi$,
is exponentially small, we can use them as an approximate tangent space to project onto the manifold.

The following proposition concerns the existence of a small tubular neighborhood of $\tilde{\CM}_\rho^\eps$ where the projection is well-defined,
see \cite{BubbleCH}.

\begin{proposition} \label{thm:Fermi}
Let $\tilde{u}^\xi$, $\tilde{\CM}_\rho^\eps$, $\Omega_\rho$ be as in Theorem \ref{thm:slowmf}; then, for $\eta > 1$, the
condition
\begin{equation}
\inf_{\xi \in \Omega_{\rho + 2\delta}} \Vert u - \tilde{u}^\xi \Vert 
< \eps^\eta,
\end{equation}
implies the existence of a unique pair $\xi \in \Omega_{\rho + \delta}$ , $v \in H_0^{-1}$ such that
\begin{align} \label{eq:Fermi}
u &= \tilde{u}^\xi + v \nonumber  \\
\langle v , \psi_i^\xi \rangle &= 0, \,\, i =1,2,
\end{align}
where $\psi_1^\xi, \psi_2^\xi$ form a basis of the two-dimensional subspace 
corresponding to the two smallest eigenvalues of the linearized operator $\CL^\xi$ 
and are given by theorem \ref{LinCH} (iii).
Moreover, the map $u \to (\xi, v)$ defined by \eqref{eq:Fermi} is a
smooth map together with its inverse.

\end{proposition}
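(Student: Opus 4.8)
The plan is to realize the map $u \mapsto (\xi,v)$ as an application of the implicit function theorem, exactly as one does for a classical Fermi (tubular neighborhood) coordinate system, with the extra care that the approximate tangent vectors $\psi_i^\xi$ are only \emph{close} to, not equal to, the true tangent vectors $\tilde{u}_i^\xi$. First I would rewrite the two conditions in \eqref{eq:Fermi} as a single nonlinear equation: given $u$, define
\[
G(u,\xi) = \big( \langle u - \tilde{u}^\xi, \psi_1^\xi \rangle,\ \langle u - \tilde{u}^\xi, \psi_2^\xi \rangle \big) \in \R^2,
\]
and note that $(\xi,v)$ with $v = u - \tilde{u}^\xi$ solves \eqref{eq:Fermi} if and only if $G(u,\xi)=0$. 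The point $\xi$ should then be obtained as a zero of $G(u,\cdot)$ for each fixed $u$ in the tube; smoothness of $v$ in $u$ is automatic once $\xi$ is smooth in $u$, since $v = u - \tilde{u}^\xi$ and $\xi \mapsto \tilde{u}^\xi$ is $C^1$ (indeed $C^4$ in space, smooth in $\xi$) by Theorem~\ref{thm:slowmf}.

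Next I would set up the implicit function theorem. On the manifold itself, i.e.\ for $u = \tilde{u}^{\xi_0}$, we have $G(\tilde{u}^{\xi_0},\xi_0)=0$, so we have an exact solution to perturb from. The Jacobian in $\xi$ is
\[
\partial_{\xi_k} G_i(u,\xi) = -\langle \tilde{u}_k^\xi, \psi_i^\xi \rangle + \langle u - \tilde{u}^\xi, \psi_{i,k}^\xi \rangle .
\]
Evaluated at $u=\tilde{u}^\xi$ the second term vanishes, and the first term is exactly the matrix $-B(\xi)$ with $B_{ik}(\xi) = \langle \tilde{u}_k^\xi, \psi_i^\xi\rangle$ that was already shown to be invertible in the discussion following Theorem~\ref{LinCH}, with inverse bounded uniformly in $\eps$ (it differs from $(\langle \tilde{u}_i^\xi,\tilde{u}_j^\xi\rangle)^{1/2}$, which has a nonsingular limit, only by $\CO(\exp)$ terms). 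Hence $\partial_\xi G(\tilde{u}^\xi,\xi)$ is boundedly invertible, and by continuity it remains invertible with uniformly bounded inverse as long as $\|u-\tilde{u}^\xi\|$ is small enough to make $|\langle u-\tilde{u}^\xi,\psi_{i,k}^\xi\rangle|$ small; here I use $\|\psi_{i,k}^\xi\| = \CO(\eps^{-1})$ from Theorem~\ref{LinCH}(iii), so a bound of size $\eps^\eta$ with $\eta>1$ on $\|u-\tilde{u}^\xi\|$ suffices to dominate it. The implicit function theorem then gives, locally around each $(\tilde{u}^{\xi_0},\xi_0)$, a unique smooth solution branch $\xi = \xi(u)$ with smooth inverse, and the stated inequality $\inf_{\xi\in\Omega_{\rho+2\delta}}\|u-\tilde{u}^\xi\|<\eps^\eta$ guarantees $u$ lies in such a neighborhood with the resulting $\xi$ landing in the slightly larger set $\Omega_{\rho+\delta}$.

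The main obstacle is \emph{global} uniqueness of the pair $(\xi,v)$ in the tube, as opposed to the local existence/uniqueness handed to us by the implicit function theorem. To upgrade, I would argue by contradiction: suppose $\xi \neq \xi'$ both satisfy \eqref{eq:Fermi} for the same $u$ with $\|u-\tilde{u}^\xi\|, \|u-\tilde{u}^{\xi'}\| < \eps^\eta$. Then $\|\tilde{u}^\xi - \tilde{u}^{\xi'}\| < 2\eps^\eta$, and since along the manifold $\|\tilde{u}^\xi - \tilde{u}^{\xi'}\| \geq c|\xi-\xi'|$ for some $c>0$ (from the uniform lower bound on the tangent-vector Gram matrix, the manifold is a uniform bi-Lipschitz embedding away from $\partial\Omega$), the two centers are $\CO(\eps^\eta)$-close; a Taylor expansion of the orthogonality conditions then reduces, via the invertibility of $B(\xi)$, to $|\xi - \xi'| \leq \text{const}\cdot\eps^{\eta-1}|\xi-\xi'|$, forcing $\xi = \xi'$ for $\eps$ small. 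A second, minor point is that $\tilde{u}^\xi$ is only defined for $\xi \in \Omega_{\rho+\delta}$, so I must check the solution branch does not run out of this domain; this is where the quantitative gap between $\Omega_{\rho+2\delta}$ (where the infimum is taken) and $\Omega_{\rho+\delta}$ (where $\xi$ is allowed to live) is used, together with the Lipschitz bound above to control how far $\xi$ can move from the near-minimizer. Since this is essentially Lemma/Proposition-level material already carried out in \cite{BubbleCH}, I would at this stage cite that reference for the routine parts and only reproduce the IFT setup and the invertibility bookkeeping in detail.
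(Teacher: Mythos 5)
Your implicit-function-theorem argument is correct and is essentially the standard proof: the paper itself gives no proof of Proposition \ref{thm:Fermi} but simply cites \cite{BubbleCH}, where the coordinate system is constructed by exactly this kind of IFT argument based on the invertibility of the matrix $\bigl(\langle \tilde{u}_k^\xi,\psi_i^\xi\rangle\bigr)$ and the bound $\Vert\psi_{i,k}^\xi\Vert=\CO(\eps^{-1})$, which makes the perturbation term of size $\CO(\eps^{\eta-1})$ harmless for $\eta>1$. Your additional bookkeeping (global uniqueness via the uniform lower bound on the Gram matrix of the tangent vectors, and the use of the gap between $\Omega_{\rho+2\delta}$ and $\Omega_{\rho+\delta}$ to keep $\xi$ in the admissible set) fills in precisely the routine points the paper delegates to the reference.
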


Let $u(t)$ be a solution of \eqref{eq}. We will call the coordinates $v$ and $\xi$ defined in proposition \ref{thm:Fermi} the Fermi coordinates of $u(t)$.

\subsection{The exact stochastic equation for the droplet}

In the remainder of this section we adopt the approach of \cite{AnDBKa:12} 
and assume that the center $\xi$ of the bubble 
$\tilde{u}^\xi$ defines a multidimensional diffusion process which is given by
\begin{equation}
\label{e:ansatz}
d\xi_k = f_k (\xi) \, dt + \langle \sigma_k(\xi) , dW \rangle, 
\end{equation}
for some given vector field $f : \R^2 \to \R^2$ and some variance 
$\sigma : \R^2 \to \CH^2$.
We proceed with deriving explicit formulas for $f$ and $\sigma$, 
which still depend on the distance $v$ to the manifold.

\vspace{0.5 cm}

We use the It\^o formula, in order to differentiate (\ref{eq:Fermi}) with respect to $t$, and get
\begin{equation}
du=dv+\sum_j \tilde{u}_j^{\xi}d\xi_j +\tfrac12\sum_{i,j} \tilde{u}_{ij}^{\xi} d\xi_j  d\xi_i.
\end{equation}

Taking the inner product in the Hilbert space $H^{-1}$ with $\psi^\xi_k$ yields for any $k$
\begin{equation}
 \label{e:e1}
\langle \psi^\xi_k, du\rangle 
=\langle \psi^\xi_k, dv\rangle 
+\sum_j \langle \psi^\xi_k, \tilde{u}_j^{\xi}\rangle d\xi_j 
+\tfrac12\sum_{i,j} \langle \psi^\xi_k, \tilde{u}_{ij}^{\xi} \rangle  d\xi_j  d\xi_i.
\end{equation}
On the other hand taking the scalar-product of (\ref{eq}) with $\psi^\xi_k$  we derive 
\begin{equation}
  \label{e:e2}
\langle \psi^\xi_k, du\rangle 
=\langle \psi^\xi_k, \CL(v+\tilde{u}^\xi)\rangle dt
+ \langle \psi^\xi_k,  dW \rangle.
\end{equation}
Now (\ref{e:e1}) and (\ref{e:e2}) together imply 
\begin{eqnarray}
 \label{e:1st}
\sum_j \langle \psi^\xi_k, \tilde{u}_j^{\xi}\rangle d\xi_j 
&=&- \langle \psi^\xi_k, dv\rangle 
- \tfrac12\sum_{i,j} \langle \psi^\xi_k, \tilde{u}_{ij}^{\xi} \rangle  \langle \CQ \sigma^\xi_j,\sigma^\xi_i \rangle dt\\
&&+\langle \psi^\xi_k, \CL(v+\tilde{u}^\xi)\rangle dt 
+ \langle \psi^\xi_k, dW \rangle, \nonumber
\end{eqnarray}
where we also used that $\langle w, dW \rangle\langle g, dW \rangle
=\langle \CQ  w, g \rangle dt$.

In order to eliminate $dv$, we apply the It\^o formula to the orthogonality
condition $\langle \psi_k^\xi, v \rangle=0$ and arrive at 
\begin{eqnarray*}
 \langle dv , \psi^\xi_k \rangle &=&  -\langle v , d\psi^\xi_k \rangle
  -\langle dv , d\psi^\xi_k \rangle\\
&=& -\sum_j \langle v , \psi_{jk}^{\xi} \rangle d\xi_j
  -\tfrac12\sum_{i,j} \langle v , \psi_{ijk}^{\xi} \rangle d\xi_i d\xi_j
  -\sum_j \langle dv, \psi_{jk}^{\xi} \rangle  d\xi_j \\
&=& -\sum_j \langle v , \psi_{jk}^{\xi} \rangle d\xi_j
  -\tfrac12  \sum_{i,j} \langle v , \psi_{ijk}^{\xi} \rangle \langle \CQ \sigma_i^{\xi} , \sigma_j^{\xi} \rangle dt
  -\sum_j \langle dv, \psi_{jk}^{\xi} \rangle  d\xi_j \;.
\end{eqnarray*}
Now we use that $dv = du - d{\tilde{u}}^{\xi}$ and the fact that $dt dt = 0$ and $dW dt = 0$ and get
\begin{eqnarray*}
\lefteqn{ -\sum_j \langle dv, \psi_{jk}^{\xi} \rangle  d\xi_j}\\
&=& -\sum_j \langle du, \psi_{jk}^{\xi} \rangle  d\xi_j
 +\sum_j \langle d\tilde{u}^{\xi}, \psi_{jk}^{\xi} \rangle  d\xi_j \\
&=& -\sum_j \langle \CL(u) , \psi_{jk}^{\xi} \rangle dt d\xi_j
 - \sum_j \langle \psi_{jk}^{\xi} , dW \rangle d\xi_j
 +\sum_{i,j} \langle \psi_{jk}^{\xi} , \tilde{u}_i^{\xi} \rangle d\xi_i d\xi_j \\
&=& - \sum_j \langle \CQ \psi_{jk}^{\xi} , \sigma_j^{\xi} \rangle dt
 +  \sum_{i,j} \langle \psi_{jk}^{\xi} , \tilde{u}_i^{\xi} \rangle \langle \CQ\sigma_i^{\xi} , \sigma_j^{\xi} \rangle dt \;.
\end{eqnarray*}

This yields together with (\ref{e:1st})
\begin{eqnarray}
 \label{e:final}
\lefteqn{\sum_j \left[ \langle \psi^\xi_k, \tilde{u}_j^{\xi}\rangle - \langle v,  \psi_{jk}^{\xi}\rangle \right] d\xi_j}\nonumber \\
& = & 
\sum_{i,j} \left[ \tfrac12  \langle v, \psi_{ijk}^{\xi} \rangle - \langle \psi^\xi_{jk} , \tilde{u}^\xi_i \rangle - \tfrac12  \langle \psi^\xi_k, \tilde{u}_{ij}^{\xi} \rangle \right] \langle \CQ\sigma^\xi_i,\sigma^\xi_j\rangle dt
+  \sum_j \langle \CQ \psi^\xi_{jk},  \sigma_j^\xi\rangle dt \nonumber\\
&& +\langle \psi^\xi_k, \CL(v+\tilde{u}^\xi)\rangle dt 
+  \langle \psi^\xi_k, dW \rangle
\end{eqnarray}

Define the matrix $(A_{kj}(\xi))_{k,j}=A(v,\xi)\in\R^{2\times 2}$ by
\begin{equation}
 \label{def:A}
A_{kj}(\xi)= Z^0_{kj} + Z^1_{kj}(v)=\langle \psi^\xi_k, \tilde{u}_j^{\xi}\rangle - \langle v,  \psi_{k,j}^{\xi}\rangle.
\end{equation}

By theorem \ref{LinCH} (iii) we have $\Vert \psi^\xi_{i,j} \Vert = \CO( \eps^ {-1})$.
Therefore,as long as
\begin{equation*}
\inf_{\xi \in \Omega_{\rho + 2\delta}} \Vert u - \tilde{u}^\xi \Vert
= \inf_{\xi \in \Omega_{\rho + 2\delta}} \Vert v^\xi \Vert  < \eps^\eta \,\,\, \text{ for some } \eta > 1 
\end{equation*}
we have
\begin{equation}
\vert \langle \psi_{j,k}, v \rangle \vert \leq \Vert v \Vert \Vert \psi_{j,k} \Vert < \eps^ {\eta - 1}\;.
\end{equation}

In the comment to the proof of \ref{LinCH} we have seen that the matrix 
$\langle \psi^\xi_k, \tilde{u}_j^{\xi}\rangle$ 
is nonsingular and approaches a constant 
as $\eps \to 0$.
As a consequence we observe that the matrix $A(\xi)$ is invertible in a tube $\Gamma$ 
around $\tilde{\CM}_\rho^\eps$. This proof is straightforward. The  details are similar to Lemma \ref{lem:matrix} 
and the tube $\Gamma$ has radius $\eps^\eta$ for any fixed $\eta>1$.

We denote the entries of the inverse matrix by $A^{-1}_{kj}(\xi)$.
\vspace*{0.3cm}

From (\ref{e:final})
we derive
\begin{equation*}
\sum_j{A_{kj}(\xi)} \, \sigma_j^\xi = \psi^\xi_k
\end{equation*}
and 
\begin{eqnarray*}
 \sum_j{A_{kj}(\xi)}f_j(\xi)&=&
 \sum_{i,j} \left[ \tfrac12\langle v, \psi_{i,jk}^{\xi} \rangle - \langle \psi^\xi_{j,k} , \tilde{u}^\xi_i \rangle -\tfrac12 \langle \psi^\xi_k, \tilde{u}_{ij}^{\xi} \rangle \right] \langle \CQ\sigma^\xi_i,\sigma^\xi_j\rangle \nonumber\\
&& +  \sum_j \langle \CQ \psi^\xi_{j,k},  \sigma_j^\xi\rangle +\langle \psi^\xi_k, \CL(v+\tilde{u}^\xi)\rangle.
\end{eqnarray*}
Using the invertibility of $A(\xi)$ we finally get formulas for $f$ and $\sigma$:
\begin{equation}
\label{def:sigma}
\sigma_r(\xi) = \sum_i A_{ri}^{-1}(\xi) \psi^\xi_i
\end{equation}
and
\begin{align}
\label{def:f}
f_r(\xi) = &\sum_i A_{ri}^{-1}(\xi) \langle \CL(v+\tilde{u}^\xi), \psi^\xi_i\rangle \nonumber \\
&+ \sum_{i,j,k} A_{ri}^{-1}(\xi) \left[ \tfrac12\langle v, \psi_{i,jk}^{\xi} \rangle - \langle \psi^\xi_{j,k} , \tilde{u}^\xi_i \rangle -\tfrac12 \langle \psi^\xi_k, \tilde{u}_{ij}^{\xi} \rangle \right] \langle \CQ\sigma^\xi_i,\sigma^\xi_j \rangle \nonumber \\
&+ \sum_i A_{ri}^{-1}(\xi) \sum_j \langle \CQ \psi^\xi_{i,j},  \sigma_j^\xi\rangle.
\end{align}

\subsection{Verification of the SDE}

In the derivation, we made the assumption that $\xi$ is a semimartingale with respect to the Wiener process $W$.
We now prove that this assumption is indeed true. 
At least we find one splitting $u = \tilde{u}^\xi + v$, where $\xi$ is a semimartingale given by our derived SDE for $\xi$.

\begin{lemma}
Consider the pair of functions $(\xi,v)$ as solutions of the system given by
(\ref{eq:dv}) and the ansatz (\ref{e:ansatz}), where $\sigma$ and $f$ are given by (\ref{def:sigma}) and (\ref{def:f}). 
Suppose that initially $\langle \psi^{\xi(0)}_k,v(0) \rangle = 0$ 
for $k = 1,2$.

Then $u = \tilde{u}^\xi + v$ solves (\ref{eq}) with $\langle \psi^\xi_k,v \rangle = 0$
for $k = 1,2$.
\end{lemma}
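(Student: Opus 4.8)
The plan is to run the construction in reverse: starting from a solution $(\xi,v)$ of the coupled system consisting of the ansatz SDE \eqref{e:ansatz} (with $f$, $\sigma$ given by \eqref{def:f}, \eqref{def:sigma}) and the equation \eqref{eq:dv} for $v$, I want to check first that the orthogonality constraint $\langle \psi_k^\xi, v\rangle = 0$ is propagated in time, and second that $u := \tilde u^\xi + v$ then satisfies \eqref{eq}. The key point is that the formulas \eqref{def:f}, \eqref{def:sigma} were obtained precisely by solving the linear system $A(\xi)\sigma^\xi = \psi^\xi$, $A(\xi)f(\xi) = (\text{the drift terms})$ that arises from demanding both properties, so the verification is essentially about justifying that every It\^o-calculus manipulation done in the derivation is reversible once $\xi$ is known a priori to be the semimartingale \eqref{e:ansatz}.

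First I would set $g_k(t) := \langle \psi_k^{\xi(t)}, v(t)\rangle$ and compute $dg_k$ by It\^o's formula, using $d\psi_k^\xi = \sum_j \psi_{k,j}^\xi\,d\xi_j + \tfrac12\sum_{i,j}\psi_{k,ij}^\xi\,d\xi_i\,d\xi_j$ and the given equation for $dv$, together with $d\xi_i\,d\xi_j = \langle\CQ\sigma_i^\xi,\sigma_j^\xi\rangle\,dt$, $d\xi_i\,dv$, etc. Collecting the $dt$-terms and the $dW$-terms separately, the $dW$-coefficient will vanish exactly because of the relation $\sum_j A_{kj}(\xi)\sigma_j^\xi = \psi_k^\xi$ encoded in \eqref{def:sigma} (the Brownian part of $\langle\psi_k^\xi,du\rangle$ must match $\langle\psi_k^\xi,dW\rangle$), and the $dt$-coefficient will vanish because $f$ was chosen via \eqref{def:f} to solve the corresponding deterministic linear system. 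Hence $dg_k = 0$, and since $g_k(0)=0$ by hypothesis, $g_k\equiv 0$, i.e. $\langle\psi_k^\xi, v\rangle = 0$ for all $t$ and $k=1,2$.

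Next, granted the constraint, I would verify that $u = \tilde u^\xi + v$ solves \eqref{eq}. Applying It\^o's formula to $u = \tilde u^\xi + v$ gives $du = dv + \sum_j \tilde u_j^\xi\,d\xi_j + \tfrac12\sum_{i,j}\tilde u_{ij}^\xi\,d\xi_i\,d\xi_j$, which is \eqref{e:e1} read the other way. Substituting the ansatz \eqref{e:ansatz} for $d\xi_j$ and the equation \eqref{eq:dv} for $dv$, one has to check that the resulting expression equals $\CL(u)\,dt + dW = \CL(\tilde u^\xi + v)\,dt + dW$. Since $\langle\psi_k^\xi,v\rangle\equiv 0$ we may re-derive \eqref{e:final}: that identity is equivalent to the pair of scalar equations $A(\xi)f(\xi) = (\cdots)$ and $A(\xi)\sigma^\xi = \psi^\xi$, both of which hold by \eqref{def:f}, \eqref{def:sigma}. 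Because $\{\psi_1^\xi,\psi_2^\xi\}$ together with its $H^{-1}$-orthogonal complement span $H^{-1}_0$, testing against $\psi_1^\xi$, $\psi_2^\xi$ controls the ``tangential'' part, while the ``normal'' part of $du - \CL(u)\,dt - dW$ is governed directly by the definition of \eqref{eq:dv}; matching both parts shows $du = \CL(u)\,dt + dW$, which is \eqref{eq}.

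The main obstacle I expect is bookkeeping rather than conceptual: one must make sure the system $(\xi,v)$ actually has a (local) solution with the stated regularity so that all the It\^o expansions are legitimate — the coefficients $f,\sigma$ involve $A^{-1}(\xi)$, which is only defined and invertible in the tube $\Gamma$ of radius $\eps^\eta$ around $\tilde{\CM}_\rho^\eps$, so the argument is necessarily local in time (up to the exit time from $\Gamma$), and this caveat should be stated. A second delicate point is the precise cross-variation terms $\langle dv, \psi_{k,j}^\xi\rangle\,d\xi_j$ and $\langle dv, d\psi_k^\xi\rangle$ appearing in $dg_k$: one has to substitute $dv = du - d\tilde u^\xi$ and then $du = \CL(u)\,dt + dW$, exactly mirroring the computation done before \eqref{e:final}, and verify that nothing is dropped. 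Once those substitutions are tracked carefully, the cancellations are forced by the very definitions \eqref{def:f} and \eqref{def:sigma}, so no genuinely new estimate is needed beyond the invertibility of $A(\xi)$ already discussed.
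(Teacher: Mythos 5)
Your plan is essentially the paper's proof: Itô-expand, and let the cancellations forced by \eqref{def:A}, \eqref{def:sigma} (for the martingale part) and \eqref{def:f} (for the drift) kill $d\langle\psi_k^\xi,v\rangle$, while the structure of \eqref{eq:dv} gives the PDE. One point needs repair, though: you run the two halves in the wrong order and thereby introduce a circularity as written. In your first step (propagation of $g_k=\langle\psi_k^\xi,v\rangle$) you propose to handle the covariation terms by substituting $dv=du-d\tilde u^\xi$ and then $du=\CL(u)\,dt+dW$ --- but that identity is exactly your second step, not yet available. The fix is immediate and is what the paper does: verify first that $u=\tilde u^\xi+v$ solves \eqref{eq}. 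This part needs neither the orthogonality constraint nor the explicit formulas for $f$ and $\sigma$ (contrary to your ``granted the constraint'' framing and your tangential/normal decomposition, which is unnecessary): adding $d\tilde u^\xi=\sum_j\tilde u_j^\xi\,d\xi_j+\tfrac12\sum_{i,j}\tilde u_{ij}^\xi\,d\xi_i\,d\xi_j$ to \eqref{eq:dv} and using $d\xi_i\,d\xi_j=\langle\CQ\sigma_i^\xi,\sigma_j^\xi\rangle\,dt$ together with $\sum_j c_j^\xi u_j^\xi=\CL(\tilde u^\xi)$ makes the $\tilde u_j^\xi\,d\xi_j$ and Itô-correction terms cancel outright, leaving $du=\CL(u)\,dt+dW$. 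With that in hand (or, equivalently, by reading the martingale part of $dv$ directly off \eqref{eq:dv}, which gives the same covariations), your computation of $dg_k$ goes through exactly as you describe, and $g_k(0)=0$ yields the constraint for all times. Your remark about working locally up to the exit time from the tube where $A(\xi)$ is invertible is appropriate and consistent with the paper.
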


\begin{proof}
We first prove that $u = \tilde{u}^\xi + v$ solves (\ref{eq}).
\begin{eqnarray*}
 du
 &=& \tilde{u}^\xi + dv \\
 &=& \sum_j \tilde{u}^\xi_j \, d\xi_j + \tfrac12\sum_{i,j} \tilde{u}_{ij}^\xi \, d\xi_i \, d\xi_j +dv \\
 &=& \sum_j \tilde{u}^\xi_j \, d\xi_j + \tfrac12\sum_{i,j} \tilde{u}_{ij}^\xi \, d\xi_i \, d\xi_j +
 \CL(v+\tilde{u}^\xi) \, dt +  \, dW \\
 && -\sum_j  \tilde{u}_j^{\xi} \, d\xi_j -\tfrac12 \sum_{i,j}  \tilde{u}_{ij}^{\xi}  \langle  Q\sigma^\xi_j,\sigma^\xi_i\rangle \, dt \\
 &=& \CL(v+\tilde{u}^\xi)\,dt + \, dW \\
 &=& \CL(u)\,dt + \, dW
\end{eqnarray*}
The orthogonality condition follows from $d\langle  v,\psi_k^\xi \rangle = 0$ since $v(0) \perp T_{\tilde{u}^{\xi(0)}} \CM$. 
We have
 \begin{eqnarray*}
d\langle  v,\psi_k^\xi  \rangle &=& \langle dv , \psi_k^\xi \rangle + \langle v, d\psi_k^\xi \rangle + \langle  dv,d\psi_k^\xi  \rangle \\
&=& \langle dv , \psi_k^\xi \rangle + \langle v, d\psi_k^\xi \rangle + \langle du, d\psi_k^\xi \rangle - \langle du^\xi, d\psi_k^\xi \rangle \\
&=& \langle \CL(u), \psi_k^\xi \rangle \, dt + \langle \psi_k^\xi,  \, dW \rangle - 
\sum_j \langle \tilde{u}^\xi_j,\psi_k^\xi \rangle \, d\xi_j + \sum_j \langle \psi^\xi_{k,j},v \rangle \,
d\xi_j \\
&& - \tfrac12 \sum_{i,j} \langle \tilde{u}_{ij}^\xi , \psi_k^\xi \rangle \langle Q\sigma_i^\xi,\sigma_j^\xi \rangle \, dt + \tfrac12 \sum_{i,j} \langle \psi^\xi_{k,ij},v\rangle \langle Q\sigma_i^\xi,\sigma_j^\xi \rangle  \, dt \\
&& + \sum_j \langle \psi^\xi_{k,j}, Q\sigma_j^\xi \rangle \, dt - \sum_{i,j} \langle \psi^\xi_{k,j}, \tilde{u}_i^\xi \rangle \langle Q\sigma_i^\xi , \sigma_j^\xi \rangle \, dt
 \end{eqnarray*} 
At first we look at the $dW$-terms:
\begin{eqnarray*}
\psi_k^\xi  - \sum_j \langle \psi_k^\xi, \tilde{u}^\xi_j \rangle \sigma_j^\xi + \sum_j \langle \psi^\xi_{k,j} , v \rangle \sigma_j^\xi
  &= &\psi_k^\xi - \sum_j \left[\langle \psi_k^\xi, \tilde{u}^\xi_j \rangle - \langle v,\psi^\xi_{k,j}\rangle \right] \sigma_j^\xi \\
  &\overset{(\ref{def:A})}{=} &\psi_k^\xi - \sum_j a_{kj}\sigma_j^\xi \overset{(\ref{def:sigma})}{=} 0.
\end{eqnarray*}
Next we consider the drift term:
\begin{align*}
 &\langle \psi_k^\xi, \CL(u) \rangle - \tfrac12 \sum_{i,j} \langle \tilde{u}_{ij}^\xi, \psi_k^\xi \rangle \langle Q\sigma_i^\xi, \sigma_j^\xi \rangle + \tfrac12 \sum_{i,j} \langle \psi^\xi_{k,ij},v \rangle \langle Q\sigma_i^\xi, \sigma_j^\xi \rangle \\
&- \sum_{i,j} \langle \psi^\xi_{k,j},\tilde{u}_i^\xi \rangle \langle Q\sigma_i^\xi, \sigma_j^\xi \rangle + \sum_j \langle \psi^\xi_{k,j} ,Q\sigma_j^\xi \rangle - \sum_j \langle \psi_k^\xi, \tilde{u}_j^\xi \rangle f_j(\xi) + \sum_j \langle \psi^\xi_{k,j},Q\sigma_j^\xi \rangle \\
&= \sum_{i,j} \left[ \tfrac12\langle v, \psi^\xi_{k,ij} \rangle - \langle \psi^\xi_{k,j} , \tilde{u}^\xi_i \rangle -\tfrac12 \langle \psi_k^\xi, \tilde{u}_{ij}^{\xi} \rangle \right] \langle Q\sigma^\xi_i,\sigma^\xi_j\rangle 
 \\& +  \sum_j \langle Q \psi^\xi_{k,j},  \sigma_j^\xi\rangle +\langle \psi_k^\xi, \CL(v+\tilde{u}^\xi)\rangle
 - \sum_j \left[ \langle \psi_k^\xi, \tilde{u}_j^\xi \rangle - \langle \psi^\xi_{k,j} , v \rangle \right] f_j(\xi) 
  \\& \overset{(\ref{def:f})}{=} 0
\end{align*}
This completes the proof that $\xi$ is indeed a semimartingale.
\end{proof}


\subsection{Approximate stochastic ODE for the droplet's motion}

In this section we want to analyze the exact equation for the droplet's motion and its approximation
in terms of $\eps$. We start with splitting the ansatz (\ref{e:ansatz}) into its deterministic part and extra stochastic terms given by a process $\CA_s$
\begin{align}
d\xi_r = \sum_i A_{ri}^{-1}(\xi) \langle \CL(v+\tilde{u}^\xi), \psi^\xi_i\rangle \, dt + d\CA^{(r)}_t,
\end{align}
where due to the definitions (\ref{def:sigma}) and (\ref{def:f}) the stochastic processes $\CA_t^{(r)}$ are given by
\begin{align}
\label{eq:dAt}
d\CA^{(r)}_t := &\sum_{i,j,k} A_{ri}^{-1}(\xi) \left[ \tfrac12\langle v, \psi_{i,jk}^{\xi} \rangle - \langle \psi^\xi_{j,k} , \tilde{u}^\xi_i \rangle -\tfrac12 \langle \psi^\xi_k, \tilde{u}_{ij}^{\xi} \rangle \right] \langle \CQ\sigma^\xi_i,\sigma^\xi_j \rangle \, dt \nonumber \\
&+ \sum_i A_{ri}^{-1}(\xi) \sum_j \langle \CQ \psi^\xi_{i,j},  \sigma_j^\xi\rangle \, dt
+ \sum_i A_{ri}^{-1}(\xi) \langle \psi^\xi_i, dW \rangle.
\end{align}
In this section let us first show that the $\xi_i$ are driven by a noise term of the type $\langle \tilde{u}_i^\xi,dW \rangle$, 
which means that we project the Wiener process to the slow manifold.
We also give bounds on the drift $f(\xi)$ and the diffusion $\sigma(\xi)$. 

%
In view of theorem \ref{H-Sj} we have
\begin{equation*}
\frac{\tilde{u}_i^\xi}{\Vert \tilde{u}_i^\xi  \Vert} =
\sum_k \left\langle \frac{\tilde{u}_i^\xi}{\Vert \tilde{u}_i^\xi  \Vert} , \psi^\xi_k \right\rangle 
\psi^\xi_k + \CO(\exp),
\end{equation*}
where $\psi^\xi_k$ denotes the eigenfunctions corresponding to the small eigenvalues of $\CL^\xi$. (see theorem \ref{LinCH}). 
Using $\Vert \tilde{u}^\xi_i \Vert \leq C \rho$ we get by multiplying
\begin{equation*}
\tilde{u}_i^\xi =
\sum_k \left\langle \tilde{u}_i^\xi , \psi^\xi_k \right\rangle 
\psi^\xi_k + \CO(\exp) = \sum_k b_{ik}  \psi_k^\xi + \CO(\exp).
\end{equation*}
By rotating the eigenfunctions $\psi_k^\xi$ with an orthonormal matrix $Q$ we can introduce a new coordinate system $\bar{\psi}_k^\xi$ of eigenfunctions in such a way that $\tilde{u}^\xi_1 \parallel \bar{\psi}_1^\xi$
and the corresponding matrix defined by $\bar{b}_{ij} = \langle \tilde{u}^\xi_i, \bar{\psi}_j^\xi \rangle$
is an almost diagonal matrix and the same holds true for its inverse. \\
Hereby, $Q$ will be uniquely defined by rotating the rows of $B$ such that
\begin{align}
\label{MatForm}
\bar{B} = Q B = \begin{pmatrix}
\bar{b}_{11} & 0 \\
\bar{b}_{21} & \bar{b}_{22}
\end{pmatrix}.
\end{align}
With respect to the new coordinate system we then have
\begin{align*}
\tilde{u}_i^\xi = \sum_k \langle \tilde{u}_i^\xi, \bar{\psi}^\xi_k \rangle \bar{\psi}^\xi_k + \CO(\exp)
= \sum_k \bar{b}_{ik} \bar{\psi}^\xi_k + \CO(\exp).
\end{align*}
\begin{lemma}
\label{lem:matrix}
Consider the matrix $\bar{A}(\xi) \in \R^{2 \times 2}$ given by
\begin{equation*}
\bar{A}_{kj}(\xi) = \bar{Z}^0_{kj} + \bar{Z}^1_{kj}(v) :=\langle \bar{\psi}^\xi_k, \tilde{u}_j^{\xi}\rangle -
\langle v, \bar{\psi}_{k,j}^{\xi}\rangle
\end{equation*}
Then, as long as $\Vert v \Vert \leq C \eps^{1+\kappa}$ for some $\kappa > 0$ and $0 < \eps < \eps_0$,
$\bar{A}(\xi)$ is invertible and its inverse $\bar{A}^{-1}(\xi)$ can be estimated by
\begin{equation*}
\bar{A}_{kj}^{-1}(\xi) = \|\tilde{u}^\xi_k\|^{-1} \delta_{kj} + \CO(1).
\end{equation*}
\end{lemma}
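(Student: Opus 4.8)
The plan is to analyze the matrix $\bar{A}(\xi)$ by splitting it into a dominant diagonal part plus a controllably small remainder, then invoke a standard perturbation argument for matrix inversion. First I would examine the leading term $\bar{Z}^0_{kj}(\xi) = \langle \bar{\psi}^\xi_k, \tilde{u}^\xi_j \rangle = \bar{b}_{jk}$. By construction, after the rotation by $Q$ described just before the lemma, the matrix $\bar{B} = (\bar{b}_{ij})$ has the lower-triangular form \eqref{MatForm} with $\bar{b}_{12} = 0$, and moreover $\bar{B}$ approaches (a triangularization of) a fixed nonsingular limit as $\eps \to 0$, since $B \cdot B^T$ converges to the nonsingular Gram matrix $(\langle \tilde{u}^\xi_i, \tilde{u}^\xi_j \rangle)$ analyzed in the comment following Theorem \ref{LinCH}. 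The transpose $\bar{Z}^0$ is then \emph{upper} triangular, with diagonal entries $\bar{b}_{kk} = \langle \tilde{u}^\xi_k, \bar\psi^\xi_k\rangle$, and — because $\bar\psi^\xi_k$ is a unit vector in the direction of $\tilde u^\xi_k$ up to $\CO(\exp)$ and up to the off-diagonal mixing encoded in $\bar B$ — one gets $\bar{b}_{kk} = \|\tilde u^\xi_k\| + (\text{lower order})$. The off-diagonal entry of $\bar Z^0$ is $\bar b_{21}$, which is $\CO(1)$ relative to the diagonal scale, reflecting the non-orthogonality of the two tangent directions; this is harmless.

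Next I would bound the perturbation $\bar{Z}^1_{kj}(v) = -\langle v, \bar\psi^\xi_{k,j}\rangle$. Since $\bar\psi^\xi_{k,j}$ is obtained from the $\psi^\xi_{k,j}$ by the (smooth, $\xi$-dependent) rotation $Q$, the estimate $\|\psi^\xi_{i,j}\| = \CO(\eps^{-1})$ from Theorem \ref{LinCH}(iii) carries over to give $\|\bar\psi^\xi_{k,j}\| = \CO(\eps^{-1})$ (one also uses that $Q$ and its $\xi$-derivatives are bounded, which follows from smoothness of $B(\xi)$ and nonsingularity of its limit). Hence by Cauchy--Schwarz,
\[
|\bar{Z}^1_{kj}(v)| \le \|v\|\,\|\bar\psi^\xi_{k,j}\| \le C \eps^{1+\kappa}\cdot C\eps^{-1} = C\eps^\kappa,
\]
which tends to $0$ with $\eps$. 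So $\bar A(\xi) = \bar Z^0(\xi) + \CO(\eps^\kappa)$, where $\bar Z^0$ is a fixed-scale invertible (triangular) matrix whose inverse is itself triangular with diagonal $\bar b_{kk}^{-1} = \|\tilde u^\xi_k\|^{-1} + (\text{lower order})$.

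Finally I would conclude via the Neumann series: for $\eps$ small enough that $\|(\bar Z^0)^{-1}\|\cdot\|\bar Z^1\| < 1/2$ (say), $\bar A(\xi)$ is invertible with
\[
\bar A^{-1}(\xi) = (\bar Z^0)^{-1} - (\bar Z^0)^{-1}\bar Z^1(\bar Z^0)^{-1} + \dots = (\bar Z^0)^{-1} + \CO(\eps^\kappa \cdot \rho),
\]
and reading off entries gives $\bar A^{-1}_{kj}(\xi) = \|\tilde u^\xi_k\|^{-1}\delta_{kj} + \CO(1)$, where the $\CO(1)$ absorbs both the off-diagonal contribution $\bar b_{21}$ of $(\bar Z^0)^{-1}$ and the $\eps^\kappa$-small correction (recall $\|\tilde u^\xi_k\| = \CO(\rho)$, so the claimed remainder scale $\CO(1)$ is the right one relative to the diagonal scale $\rho^{-1}$). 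I expect the main obstacle to be bookkeeping the precise asymptotics of the limiting Gram matrix and the rotation $Q$ — that is, pinning down that $\bar b_{kk} = \|\tilde u^\xi_k\|(1+o(1))$ and that the off-diagonal stays genuinely lower-order — rather than the perturbation step, which is routine once those scales are fixed; one would likely cite the explicit computation of $(\langle\tilde u^\xi_i,\tilde u^\xi_j\rangle)$ (the matrix $M^{\tilde u,\xi}$ referenced as \eqref{e:Muxi}) to make this precise.
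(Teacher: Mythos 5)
Your proposal follows essentially the same route as the paper: identify $\bar Z^0$ with the (transposed) triangular matrix $\bar B$ from \eqref{MatForm}, use the Gram identity $\bar Z^0 (\bar Z^0)^T = \left( \langle \tilde{u}^\xi_i, \tilde{u}^\xi_j \rangle \right) + \CO(\exp)$ together with \eqref{ineq:Inv} to see that $\bar Z^0$ is $C_0 \rho \, I$ up to lower order, bound $\bar Z^1$ by $C\eps^\kappa$ via Cauchy--Schwarz and $\Vert \psi^\xi_{k,j} \Vert = \CO(\eps^{-1})$, and conclude by a Neumann series -- which is exactly the paper's decomposition $\bar A(\xi) = C_0\rho(I-E)$. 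The one point you should make explicit (and which you correctly flag as the remaining bookkeeping) is that the triangular structure plus \eqref{ineq:Inv} forces $\bar b_{21}\bar b_{11} = \CO(\rho^{3}) + \CO(\eps\rho^{-1}) + \CO(\exp)$, hence $\bar b_{21} = \CO(\rho^{2})$ rather than merely comparable to the diagonal scale; this is what makes the off-diagonal entries of the inverse $\CO(1)$ instead of $\CO(\rho^{-1})$, i.e.\ the paper's step $\bar A_{ij} = C_0\rho\,\delta_{ij} + \CO(\rho^{2})$.
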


Note that the same statement holds without the bar also for the matrix $A(\xi)$.

\begin{proof}
By \cite{CHBoundary} we have
\begin{equation}
\label{ineq:Inv}
\langle \tilde{u}_i^\xi, \tilde{u}_j^\xi \rangle = C_0^2 \rho^2 \, \delta_{ij} + \CO(\rho^{3}) 
+ \CO (\eps \rho^{-1}) + \CO(\exp)
\end{equation}
and therefore $\langle \tilde{u}_i^\xi, \tilde{u}_j^\xi \rangle$ defines for small $\rho$ an
almost diagonal, invertible matrix of order $\CO(1)$. 
Moreover, $ C_0^2 \rho^2 =  \|\tilde{u}^\xi_k\|^2$.

In the comment to theorem \ref{LinCH} we proved
the link
\begin{equation*}
\langle \tilde{u}_i^\xi, \tilde{u}_j^\xi \rangle = \left( Z^0 \cdot (Z^0)^T \right)_{ij} + \CO(\exp),
\end{equation*}
where we only needed that the basis $\psi^\xi_i$ is orthonormal. Since the orthonormal transformation
$Q$ does not change this property, we similarly obtain
\begin{equation}
\label{eq:MatId}
\langle \tilde{u}_i^\xi, \tilde{u}_j^\xi \rangle = \left( \bar{Z}^0 \cdot (\bar{Z}^0)^T \right)_{ij}
+ \CO(\exp),
\end{equation}
such that invertibility of $\bar{Z}^0$ can be derived from the invertibility of 
$\left( \langle \tilde{u}_i^\xi, \tilde{u}_j^\xi \rangle \right)_{i,j}.$
On the other hand we have
\begin{align*}
\langle v, \bar{\psi}_{i,j}^\xi \rangle \leq C \Vert v \Vert \Vert \psi_{i,j}^\xi \Vert 
\leq C \eps^\kappa.
\end{align*}
From this, we see directly that $\bar{A}(\xi)$ is invertible. 
\vspace{0.3cm} \\
Using the form (\ref{MatForm}) of the matrix $\bar{Z}^0$ and relations (\ref{ineq:Inv}) and (\ref{eq:MatId}) we see that
\begin{align*}
\bar{A}_{ij} = C_0 \rho \, \delta_{ij} + \CO(\rho^2),
\end{align*}
where we neglected higher order terms.
Next, we consider the decomposition
\begin{equation*}
\bar{A}(\xi) = C_0 \rho ( I - E),
\end{equation*}
where $I$ denotes the identity matrix and $E$ is a small perturbation thereof of order $\CO(\rho)$. Then, one has by Taylor expansion
\begin{align*}
\bar{A}^{-1} &= C_0^{-1} \rho^{-1} (I - E)^{-1} = C_0^{-1} \rho^{-1} \sum_k E^k \\
&= C_0^{-1}  \rho^{-1} ( I + E + \CO(\rho^2) ) = C_0^{-1} \rho^{-1} I + \CO(1).
\end{align*}
With this the lemma is proved.
\end{proof}
\begin{lemma}
\label{lemma:var}
Under the assumptions of Lemma \ref{lem:matrix} we have
\begin{equation}
\sigma_r(\xi) = \bar{A}_{rr}^{-1}(\xi) \, \bar{\psi}_r^\xi + \CO(1) 
= C \|\tilde{u}_r^\xi\|^{-1} \tilde{u}_r^\xi + \CO(1).
\end{equation}
\end{lemma}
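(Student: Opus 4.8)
\textbf{Proof proposal for Lemma \ref{lemma:var}.}

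The plan is to start from the exact formula \eqref{def:sigma}, namely $\sigma_r(\xi) = \sum_i \bar{A}_{ri}^{-1}(\xi)\, \bar{\psi}_i^\xi$, and insert the estimate for $\bar{A}^{-1}$ obtained in Lemma \ref{lem:matrix}. First I would write $\bar{A}_{ri}^{-1}(\xi) = \|\tilde{u}_r^\xi\|^{-1}\delta_{ri} + \CO(1)$, so that the sum over $i$ collapses in leading order to the single term $\|\tilde{u}_r^\xi\|^{-1}\bar{\psi}_r^\xi$, while the off-diagonal and error contributions are of the form $\CO(1)\cdot\bar{\psi}_i^\xi$. Here one must use that the eigenfunctions $\bar{\psi}_i^\xi$ are normalized in $H^{-1}$ (they are just an orthonormal rotation of the $\psi_i^\xi$ from Theorem \ref{LinCH}(iii)), so $\|\bar{\psi}_i^\xi\| = 1$ and each such error term really is $\CO(1)$ in the $H^{-1}$-norm. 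This already gives the first equality $\sigma_r(\xi) = \bar{A}_{rr}^{-1}(\xi)\,\bar{\psi}_r^\xi + \CO(1)$.

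For the second equality I would replace $\bar{\psi}_r^\xi$ by the tangent vector it approximates. By construction of the rotated basis (equation \eqref{MatForm}), $\tilde{u}_r^\xi = \sum_k \bar{b}_{rk}\bar{\psi}_k^\xi + \CO(\exp)$ with $\bar{b} = \bar{B}$ lower triangular and $\bar{b}_{rr} = C_0\rho + \CO(\rho^2) = \|\tilde{u}_r^\xi\| + \CO(\rho^2)$. Inverting this triangular relation expresses $\bar{\psi}_r^\xi$ as $\|\tilde{u}_r^\xi\|^{-1}\tilde{u}_r^\xi$ plus contributions from the other $\tilde{u}_j^\xi$ with coefficients that are $\CO(1)$ (from the off-diagonal entry $\bar{b}_{21}$ normalized by the $\CO(\rho)$ diagonal entries) plus $\CO(\exp)$. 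Since $\|\tilde{u}_j^\xi\|\leq C\rho = \CO(1)$, all of these extra contributions are $\CO(1)$ in $H^{-1}$. Substituting into $\bar{A}_{rr}^{-1}(\xi)\bar{\psi}_r^\xi = (\|\tilde{u}_r^\xi\|^{-1} + \CO(1))\bar{\psi}_r^\xi$ and collecting, the leading term is $\|\tilde{u}_r^\xi\|^{-2}\tilde{u}_r^\xi$, and absorbing the constant $\|\tilde{u}_r^\xi\|^{-1} = (C_0\rho)^{-1} + \CO(1)$ into a generic constant $C$ (which is what the statement's $C\|\tilde{u}_r^\xi\|^{-1}\tilde{u}_r^\xi$ notation signals) yields the claim, all errors being $\CO(1)$.

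The main obstacle, such as it is, is bookkeeping the orders carefully: one has to track that the diagonal entries of $\bar{A}^{-1}$ are of size $\rho^{-1}$ while the corrections are only $\CO(1)$, and that multiplying a size-$\rho^{-1}$ coefficient against a size-$\rho$ off-diagonal term of $\bar{B}^{-1}$ still produces an $\CO(1)$, not a blow-up. This is exactly the point where the almost-diagonal structure \eqref{MatForm} and the estimate \eqref{ineq:Inv} are essential; without the rotation $Q$ the cross terms would only be controlled by the generic invertibility bound and one would lose the clean identification of $\sigma_r$ with a single tangent direction. No genuinely new estimate is needed beyond Lemmas \ref{lem:matrix} and the spectral bounds already cited, so the proof is short.
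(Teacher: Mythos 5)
Your proposal is correct and follows the same route as the paper: the paper's own proof is just the one-line observation that the claim is "an immediate consequence" of the definition \eqref{def:sigma} together with Lemma \ref{lem:matrix} and the almost-diagonal structure of $\bar{A}$, which is exactly the computation you spell out. Your additional bookkeeping (orthonormality of the rotated basis, inversion of the triangular relation \eqref{MatForm}, and the $\rho^{-1}\cdot\rho$ cancellation in the cross terms) is precisely the detail the paper leaves implicit, including the absorption of $\|\tilde{u}_r^\xi\|^{-1}$ into the generic constant $C$ in the second equality.
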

\begin{proof}
Immediate consequence of the definition
\begin{align*}
\sigma_r(\xi) = \sum_i \bar{A}_{ri}^{-1}(\xi) \bar{\psi}^\xi_i,
\end{align*}
where we changed the underlying coordinate system,
and the previous lemma.
Moreover, we know that $\bar{A}$ is for small $\rho$ approximately a diagonal matrix, 
so we can replace $\bar{\psi}_r^\xi$  by $\tilde{u}_r^\xi$.
\end{proof}

Next, we estimate the magnitude of the drift term $f$ in terms of $\eps$.

\begin{lemma}
\label{lemma:drift}
Under the assumptions of Lemma \ref{lem:matrix} we have
\[
\vert f(\xi , v ) \vert \leq C \eps^{-1} \eta_1.
\]
\end{lemma}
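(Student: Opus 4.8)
The plan is to bound each of the three groups of terms in the explicit formula \eqref{def:f} for $f_r(\xi)$ separately, using the estimates on $\bar A^{-1}$ from Lemma~\ref{lem:matrix}, the bounds on the eigenfunctions and their derivatives from Theorem~\ref{LinCH}(iii) and Theorem~\ref{thm:slowmf}, and the approximation $\sigma_j^\xi = C\|\tilde u_j^\xi\|^{-1}\tilde u_j^\xi + \CO(1)$ from Lemma~\ref{lemma:var}. Throughout I work in the rotated coordinate system, so that $\bar A^{-1}_{ri} = \|\tilde u_r^\xi\|^{-1}\delta_{ri} + \CO(1) = \CO(\rho^{-1})$, and I keep in mind that $\|\tilde u_j^\xi\| \le C\rho$ with $\rho$ a fixed constant, so all the $\rho$-powers are harmless constants; the only genuinely small/large parameters are $\eps$ and the noise strength $\eta_1$.

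\textbf{Step 1: the main (linear) term.} First I would treat $\sum_i \bar A_{ri}^{-1}(\xi)\langle \CL(v+\tilde u^\xi),\psi_i^\xi\rangle$. By Theorem~\ref{thm:slowmf}(iv), $\CL(\tilde u^\xi) = c_1^\xi u_1^\xi + c_2^\xi u_2^\xi$ with $|c^\xi| = \CO(\exp)$, so this part contributes only $\CO(\exp)$. For the remaining $\langle \CL(v+\tilde u^\xi) - \CL(\tilde u^\xi),\psi_i^\xi\rangle$, I expand $\CL$ around $\tilde u^\xi$: the linear-in-$v$ part is $\langle \CL^\xi v,\psi_i^\xi\rangle$, which vanishes up to $\CO(\exp)$ because $\psi_i^\xi$ is an eigenfunction of the self-adjoint $\CL^\xi$ with eigenvalue $\CO(\exp)$ and $\langle v,\psi_i^\xi\rangle = 0$ by the Fermi-coordinate constraint \eqref{eq:Fermi}; what is left is the nonlinear remainder $\Delta(F'(\tilde u^\xi + v) - F'(\tilde u^\xi) - F''(\tilde u^\xi)v)$, which is quadratic in $v$ and hence, using $\|v\| \le C\eps^{1+\kappa}$ and writing the $H^{-1}$-pairing as an $L^2$-pairing against $(-\Delta)^{-1}\psi_i^\xi$ after moving the $\Delta$, is controlled by a (negative) power of $\eps$ times $\|v\|^2$. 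One should check this stays below $C\eps^{-1}\eta_1$, which it does for $\kappa$ large enough; this is the step where one must be a little careful about how many derivatives land where, but it is routine.

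\textbf{Step 2: the quadratic-variation (Itô-correction) terms.} The bracket $\bigl[\tfrac12\langle v,\psi_{i,jk}^\xi\rangle - \langle \psi_{j,k}^\xi,\tilde u_i^\xi\rangle - \tfrac12\langle \psi_k^\xi,\tilde u_{ij}^\xi\rangle\bigr]$ is $\CO(\eps^{-1})$: each term is a product of functions whose $H^{-1}$-norms are $\CO(\eps^{-k})$ with at most one spatial derivative in the sense of Theorem~\ref{LinCH}(iii) and Theorem~\ref{thm:slowmf}(iii), and the $\langle v,\psi_{i,jk}^\xi\rangle$ term is even smaller given $\|v\|\le C\eps^{1+\kappa}$. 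The key quantity is $\langle \CQ\sigma_i^\xi,\sigma_j^\xi\rangle$: using Lemma~\ref{lemma:var}, $\sigma_i^\xi = \CO(\|\tilde u_i^\xi\|^{-1}\|\tilde u_i^\xi\|_{H^{-1}}) + \CO(1) = \CO(1)$ in $H^{-1}$, so $|\langle \CQ\sigma_i^\xi,\sigma_j^\xi\rangle| \le \eta_1\|\sigma_i^\xi\|\,\|\sigma_j^\xi\| \le C\eta_1$. Multiplying by $\bar A^{-1}=\CO(1)$ and the $\CO(\eps^{-1})$ bracket gives $\CO(\eps^{-1}\eta_1)$. The same reasoning handles $\sum_i\bar A_{ri}^{-1}\sum_j\langle \CQ\psi_{i,j}^\xi,\sigma_j^\xi\rangle$: here $\|\psi_{i,j}^\xi\| = \CO(\eps^{-1})$ by Theorem~\ref{LinCH}(iii), so $|\langle \CQ\psi_{i,j}^\xi,\sigma_j^\xi\rangle| \le \eta_1\|\psi_{i,j}^\xi\|\,\|\sigma_j^\xi\| \le C\eps^{-1}\eta_1$, again $\CO(\eps^{-1}\eta_1)$ after multiplying by the bounded $\bar A^{-1}$.

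\textbf{Conclusion and main obstacle.} Collecting the three bounds, $|f(\xi,v)| \le C\eps^{-1}\eta_1 + \CO(\exp) \le C\eps^{-1}\eta_1$, which is the claim (the $\CO(\exp)$ from Step~1 is negligible, and one may absorb it once $\eta_1$ is not itself exponentially small, or simply note that the inequality is meant up to such terms). I expect the only real work to be in Step~1: verifying that the nonlinear-in-$v$ part of $\langle\CL(v+\tilde u^\xi),\psi_i^\xi\rangle$ does not produce a worse power of $\eps$ than $\eps^{-1}$ — this requires using the Fermi constraint to kill the linear term, controlling $F'''$ on the relevant range, and tracking the $\Delta$ and $(-\Delta)^{-1/2}$ factors carefully; everything in Step~2 is a direct application of Cauchy–Schwarz in $H^{-1}$ together with $\|\CQ\|_{L(H^{-1})} = \eta_1$ and the norm bounds already recorded.
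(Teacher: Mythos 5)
Your Step 2 reproduces the paper's proof: the paper estimates exactly the two It\^o-correction $dt$-terms of $d\CA_t^{(r)}$ by Cauchy--Schwarz in $H^{-1}$, using $\norm{\CQ}_{L(H^{-1})}=\eta_1$, the bounds $\Vert \tilde u_i^\xi\Vert=\CO(1)$, $\Vert \tilde u_{ij}^\xi\Vert=\CO(\eps^{-1/2})$, $\Vert\psi_{i,j}^\xi\Vert=\CO(\eps^{-1})$, $\Vert\psi_{i,jk}^\xi\Vert=\CO(\eps^{-3/2})$ from Lemma \ref{lem:estH-1}, together with $\Vert\sigma_j^\xi\Vert=\CO(1)$ from Lemma \ref{lemma:var} and the bound on $\bar A^{-1}$ from Lemma \ref{lem:matrix}. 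That part is correct and identical in approach.

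The problem is your Step 1. The paper does not attempt to bound $\sum_i A_{ri}^{-1}\langle\CL(v+\tilde u^\xi),\psi_i^\xi\rangle$ inside this lemma: its proof opens with ``we need to estimate all $dt$-terms in the definition \eqref{eq:dAt}'', i.e.\ only the It\^o corrections; the $\CL$-drift is kept as a separate, explicitly displayed term in the decomposition of $d\xi_r$ and is only ever used later multiplied against exponentially small factors such as $\langle\tilde u_j^\xi,v\rangle=\CO(\exp)\Vert v\Vert$. Your attempt to absorb it into the bound $C\eps^{-1}\eta_1$ cannot succeed under the stated hypotheses: the nonlinear contribution $\langle\CN(\tilde u^\xi,v),\psi_i^\xi\rangle$ is completely independent of the noise, so no choice of $\kappa$ in $\Vert v\Vert\leq C\eps^{1+\kappa}$ can force it below $C\eps^{-1}\eta_1$ (let $\eta_1\to 0$ with $v$ fixed and the inequality fails). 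Moreover, estimating $\langle\CN(\tilde u^\xi,v),\psi_i^\xi\rangle=(3\tilde u^\xi v^2+v^3,\psi_i^\xi)_{L^2}$ requires $L^p$ (or $H^1$) control of $v$, as in Lemma \ref{est: Nonlinearity} and Section 4.3, which the purely $H^{-1}$ smallness assumed in Lemma \ref{lem:matrix} does not supply. (Your observation that the linear piece $\langle\CL^\xi v,\psi_i^\xi\rangle$ drops out by self-adjointness, the eigenvalue equation and the Fermi orthogonality is correct --- in fact it vanishes exactly, not merely up to $\CO(\exp)$.) So the consistent reading of the lemma is that it bounds the It\^o-correction part of the drift only; your Step 2 proves exactly that, and your Step 1, as sketched, is not a step that can be ``checked carefully'' into working --- it should be removed, with the $\CL$-term handled separately as the paper does.
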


\begin{proof}
We need to estimate all $dt$-terms in the definition \eqref{eq:dAt}. Using Lemma \ref{lemma:var}  for estimating the variance $\sigma$ we derive
\begin{align*}
\vert \langle \CQ \psi_{i,j}^\xi, \sigma_j \rangle \vert \leq C \rho^{-1} \eps^{-1} \eta_1
\end{align*}
and 
\begin{align*}
\left\vert \left[ \tfrac12\langle v, \psi_{i,jk}^{\xi} \rangle - \langle \psi^\xi_{j,k} , \tilde{u}^\xi_i \rangle -\tfrac12 \langle \psi^\xi_k, \tilde{u}_{ij}^{\xi} \rangle \right] \langle \CQ\sigma^\xi_i,\sigma^\xi_j \rangle \right\vert \leq C \rho^{-2} \eps^{-1} \eta_1,
\end{align*}
where we used the estimates
 $\Vert \tilde{u}_i^\xi \Vert = \CO(1), \Vert \tilde{u}_{ij}^\xi \Vert = \CO(\eps^{-1/2})$, $\Vert \psi_{i,j}^\xi \Vert = \CO(\eps^{-1}),
\Vert \psi_{i,jk}^\xi \Vert = \CO(\eps^{-3/2})$, which will be derived in section 5, cf. Lemma \ref{lem:estH-1}.
Combining this with the estimate of  $\bar{A}_{ri}^{-1}(\xi)$ from Lemma \ref{lem:matrix} shows that the estimate holds true.
\end{proof}

\begin{remark}{(It\^o-Stratonovich-correction)}
\label{rem:ItoStra}
Let us take a closer look at \eqref{eq:dAt}. After some calculation,
basically redoing the computation that led to \eqref{def:sigma} and
\eqref{def:f} in the Stratonovich sense and thereby leaving out It\^o
corrections, one can show that with Stratonovich differentials
\[ 
\sum_j A_{kj}(\xi,v) \circ d\xi_j = \langle \psi_k^\xi, \CL( v + \tilde{u}^\xi ) \rangle \, dt + \langle \psi_k^\xi, \circ \, dW \rangle.
\]
Thus, we can solve for $ \circ \, d\xi_j$ and obtain also for the It\^o differential 
\[
d\xi_k = \CO(\exp) \, dt + \sum_j A_{kj}^{-1}(\xi,v) 
\langle \psi_j^\xi, \circ \, dW \rangle.
\]
which is (up to some exponentially small error) the projection of the Wiener process $W$ onto the slow manifold $\tilde{\CM}_\rho^\eps$ of droplets.
\end{remark}


\section{Stochastic Stability}
\label{sec:stab}

For the stochastic stability we derive bounds for the distance from the slow manifold given by $v$.
First we give a result in $H^{-1}$ and then extend it to $L^2$.

\subsection{\boldmath$H^{-1}$-bounds}

Recall that we splitted the solution via Fermi coordinates
\begin{align*}
u(t) = \tilde{u}^{\xi(t)} + v(t)
\end{align*}
with the orthogonality condition $v(t) \perp  \psi^\xi_i(t)$ in ${H^{-1}(\Omega)}$ for $i=1,2$.
In the following we always assume that we are working on times such that $\xi(t)\in\Omega_{\rho+\delta}$ 
so that everything is well defined.

Writing (\ref{eq}) in the form $du = \CL(u) \, dt +  dW$ and expanding gives
\begin{align}
\label{eq:4.1}
du = \left[ \CL(\tilde{u}^\xi) + \CL^\xi v + \CN(\tilde{u}^\xi,v) \right] \, dt +  dW,
\end{align}

and on the other hand we have
\begin{align}
\label{eq:4.2}
du = d\tilde{u}^\xi + dv = \sum_j \tilde{u}^\xi_j d\xi_j + \sum_{i,j} \tilde{u}^\xi_{ij} \langle \CQ \sigma^\xi_i, \sigma^\xi_j \rangle \, dt + dv.
\end{align}

Here we used the definitions
\begin{eqnarray*}
\CL (w) &=& - \Delta \left( \eps^2 \Delta w - F'(w) \right), \\
\CL^\xi w &=& - \Delta \left( \eps^2 \Delta w - F''(\tilde{u}^\xi) w \right), \\ 
\CN(y,z) &=& - \Delta \left(- F'(y+z) + F'(y) + F''(y)z \right). 
\end{eqnarray*}

In the case $F(u) = \tfrac14 (u^2 -1 )^2$ we have
\begin{equation*}
\CN(\tilde{u}^\xi ,v ) = - \Delta (- 3 \tilde{u}^\xi v^2 - v^3). 
\end{equation*}

From Theorem \ref{thm:slowmf} (iv) we have for the residual
\begin{align}
\label{eq:4.3}
\CL(\tilde{u}^\xi) = \sum_j c^\xi_j u^\xi_j =\mathcal{O}(\exp).
\end{align}

Solving (\ref{eq:4.1}) and (\ref{eq:4.2}) for $dv$ and substituting (\ref{eq:4.3}), we obtain the equation
for the flow orthogonal to the slow manifold.
\begin{lemma}
Consider a solution $u(t) = \tilde{u}^{\xi(t)} + v(t)$ with $v(t) \perp_{H^{-1}} \psi_i(t)$ for $i=1,2$ and $\xi(t)$ being the diffusion process given by (\ref{def:sigma}) and (\ref{def:f}), then
\begin{align}
\label{eq:dv}
dv = &\Big( \sum_j c_j^\xi u_j^\xi + \CL^\xi v + \CN(\tilde{u}^\xi,v) \Big) dt
+  dW \nonumber \\
&- \sum_j  \tilde{u}_j^{\xi} d\xi_j 
-\tfrac12 \sum_{i,j}  \tilde{u}_{ij}^{\xi}  \langle \CQ\sigma^\xi_j,\sigma^\xi_i\rangle dt.
\end{align}
\end{lemma}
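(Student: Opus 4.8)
The plan is to derive \eqref{eq:dv} purely by algebraic manipulation of the two expressions \eqref{eq:4.1} and \eqref{eq:4.2} for $du$, using the already-derived SDE for $\xi$ and the It\^o rules for the quadratic variation of $W$. First I would equate \eqref{eq:4.1} and \eqref{eq:4.2}, substitute the residual identity \eqref{eq:4.3} (so that $\CL(\tilde u^\xi)$ is either written as $\sum_j c_j^\xi u_j^\xi$ or, equivalently, absorbed as an $\CO(\exp)$ term — the statement keeps it explicit), and solve for $dv$. This gives immediately
\[
dv = \Big(\sum_j c_j^\xi u_j^\xi + \CL^\xi v + \CN(\tilde u^\xi,v)\Big)\,dt + dW - \sum_j \tilde u_j^\xi\, d\xi_j - \tfrac12\sum_{i,j}\tilde u_{ij}^\xi\, d\xi_i\, d\xi_j,
\]
so the only thing left is to identify the last term.

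Next I would compute the quadratic covariation $d\xi_i\, d\xi_j$ from the ansatz \eqref{e:ansatz}: since $d\xi_k = f_k(\xi)\,dt + \langle\sigma_k(\xi),dW\rangle$ and $dt\,dt = 0$, $dt\,dW = 0$, only the martingale parts contribute, and by the rule $\langle w,dW\rangle\langle g,dW\rangle = \langle\CQ w,g\rangle\,dt$ already used in the derivation of \eqref{e:1st}, we get $d\xi_i\,d\xi_j = \langle\CQ\sigma_i^\xi,\sigma_j^\xi\rangle\,dt$. Substituting this into the display above turns the last term into $-\tfrac12\sum_{i,j}\tilde u_{ij}^\xi\langle\CQ\sigma_j^\xi,\sigma_i^\xi\rangle\,dt$, which (using symmetry of $\CQ$ and of $\tilde u_{ij}^\xi$ in $i,j$) is exactly the term appearing in \eqref{eq:dv}. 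This completes the identification and hence the lemma.

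Two small points need care rather than real work. One is simply making sure the expansion of $\CL(u)$ around the manifold state is correctly labelled: $\CL(u) = \CL(\tilde u^\xi) + \CL^\xi v + \CN(\tilde u^\xi,v)$ is the definition of $\CN$, so \eqref{eq:4.1} is an identity, not an approximation. The other is that the $\tilde u_j^\xi\,d\xi_j$ terms are kept \emph{unexpanded} in \eqref{eq:dv} — one does not substitute the formula \eqref{def:f}–\eqref{def:sigma} for $d\xi_j$ here; the lemma is stated in the "$\xi$-driven" form, which is what is convenient for the later stability estimates. So the statement to be proved is really the bookkeeping identity obtained by subtracting \eqref{eq:4.2} from \eqref{eq:4.1}, and there is essentially no obstacle: the content was already done in deriving the SDE for $\xi$, and this lemma just records the complementary equation for $v$.

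If anything is delicate it is only the consistent use of It\^o's formula for $\tilde u^\xi = \tilde u^{\xi(t)}$ as a function of the semimartingale $\xi(t)$, i.e. justifying the second-order term $\tfrac12\sum_{i,j}\tilde u_{ij}^\xi\,d\xi_i\,d\xi_j$ in \eqref{eq:4.2}; this is legitimate because $\xi\mapsto\tilde u^\xi$ is $C^1$ (indeed the needed $C^2$ regularity follows from Theorem \ref{thm:slowmf}, where $v^\xi$ is $C^1$ in $\xi$ with the stated derivative bounds, and $u^\xi$ is smooth in $\xi$ by \eqref{defuxi} and Proposition \ref{thm:stat}), and $\xi$ is a semimartingale by the verification Lemma in Section \ref{sec:SDE}. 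Once that is in place the proof is just the two-line algebra above.
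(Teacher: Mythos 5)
Your proposal is correct and follows exactly the paper's route: the paper obtains \eqref{eq:dv} precisely by equating \eqref{eq:4.1} with the It\^o expansion \eqref{eq:4.2} of $d\tilde u^{\xi}+dv$, substituting the residual identity \eqref{eq:4.3}, and replacing $d\xi_i\,d\xi_j$ by $\langle\CQ\sigma_i^\xi,\sigma_j^\xi\rangle\,dt$. Your remark about the factor $\tfrac12$ in the second-order It\^o term is apt, since it is the version appearing in the lemma itself.
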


Let us now turn to the estimate of $\| v \|^2_{H^{-1}}$. We first notice that It\^o calculus gives
$$
d\| v \|^2_{H^{-1}} = 2 \, \langle v , dv \rangle +  \langle dv,dv
\rangle.
$$
Since $d\xi = b(\xi) \, dt + \langle \sigma, dW \rangle$ and
$$
\langle dW, dW \rangle = \text{trace}(\CQ) \, dt = \eta_0 \, dt,
$$
again by It\^o calculus we derive
\begin{align*}
\langle dv, dv \rangle  &=
   \Big\langle \sum_j \tilde{u}_j^\xi \cdot d\xi_j, \sum_j \tilde{u}_j^\xi \cdot d\xi_j \Big\rangle
   - 2 \Big\langle  dW, \sum_j \tilde{u}_j^\xi \cdot d\xi_j \Big\rangle
   + \Big\langle  dW,  dW \Big\rangle \\
 &= \eta_0 \, \mathrm{dt}
    + \sum_{i,j} \langle \tilde{u}^\xi_i, \tilde{u}^\xi_j \rangle \langle \CQ \sigma_i^\xi,\sigma_j^\xi \rangle \, dt
    - 2 \sum_j \langle \tilde{u}^\xi_j , \CQ \sigma_j \rangle \, dt.
\end{align*}
Using the notations $\| \partial_\xi \tilde{u}^\xi \| = \max \| \tilde{u}^\xi_i \|$ and 
$\| \sigma \| = \max \| \sigma^\xi_i \|$ we have
\begin{equation}
\label{est:dvdv}
\langle dv, dv \rangle 
\leq 
\left( \eta_0 + \| \partial_\xi \tilde{u}^\xi \|^2\  \| \sigma \|^2 
 \| \CQ \| 
 + 2 \, \| \partial_\xi \tilde{u}^\xi \|  \| \sigma \|  \| \CQ \| \right) dt
 = 
\CO ( \eta_0 ) \, dt,
\end{equation}
where we used that $ \| \partial_\xi \tilde{u}^\xi \| = \CO(1)$,
$\| \sigma \| = \CO(1)$ by Lemma \ref{lemma:var} and $\norm{ \CQ }_{L(H^{-1})}= \eta_1 \leq \eta_0$.

Next, we investigate the more involved term
\begin{align}
\label{eq:vdv}
\langle v, dv \rangle = 
&\Big[ \sum_j c_j^\xi \, \langle u^\xi_j, v \rangle 
+ \langle \CL^\xi v, v \rangle
+ \langle \CN(\tilde{u}^\xi,v), v \rangle \Big] \, dt \nonumber \\
&- \sum_j \langle \tilde{u}^\xi_j, v \rangle \, d\xi_j
 - \frac12 \sum_{i,j} \langle \tilde{u}^\xi_{ij}, v \rangle  \langle \CQ \sigma_i, \sigma_j \rangle \, dt
 + \langle v, dW \rangle.
\end{align}
We start with deriving a bound for the nonlinear term $ \langle \CN(\tilde{u}^\xi ,v ), v \rangle $ 
by using spectral information for the linearized Cahn-Hilliard operator $\CL^\xi $ in $H^{-1}(\Omega)$. 
Here, it is useful that the spectral theory of the Cahn-Hilliard equation in $H^{-1}$ coincides with
the Allen-Cahn operator in $L^2$ (Remark \ref{rem:MCAC}).
\begin{lemma}
\label{est: Nonlinearity}
For $u=u^\xi+v$ with $\norm{v}_{H^{-1}(\Omega)} < c_0 \eps^{4}$ for some fixed sufficiently small $c_0>0$ we have
\begin{equation*}
\langle \CL^\xi v , v \rangle_{H^{-1}(\Omega)} + \langle \CN(\tilde{u}^\xi,v),v \rangle_{H^{-1}(\Omega)}
\leq - C \eps \norm{v}_{H^{-1}(\Omega)}^2.
\end{equation*}
\end{lemma}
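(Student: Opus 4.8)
The plan is to split the left-hand side into a ``quadratic'' piece controlled by the spectral estimate of Remark \ref{rem:MCAC} and a ``nonlinear'' remainder that must be absorbed. First I would use that $v \perp_{H^{-1}} \psi_i^\xi$, so Remark \ref{rem:MCAC} applies directly and gives
\[
\langle \CL^\xi v, v\rangle_{H^{-1}} = \langle \CA^\xi v, v\rangle_{L^2} \leq -C\eps^2 \norm{v}_{L^2}^2 .
\]
This is the only place the orthogonality is used, and it converts the problem into an $L^2$ coercivity estimate. The price is that I now need to bound $\norm{v}_{L^2}$ from below in terms of $\norm{v}_{H^{-1}}$, or more precisely to play the negative $L^2$ term against the nonlinear term. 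For the cubic nonlinearity $F(u)=\tfrac14(u^2-1)^2$ we have $\CN(\tilde u^\xi,v) = -\Delta(-3\tilde u^\xi v^2 - v^3)$, so
\[
\langle \CN(\tilde u^\xi,v),v\rangle_{H^{-1}} = \langle -\Delta(-3\tilde u^\xi v^2 - v^3), v\rangle_{H^{-1}} = \langle -3\tilde u^\xi v^2 - v^3, v\rangle_{L^2} = -3\int_\Omega \tilde u^\xi v^3 - \int_\Omega v^4 ,
\]
using $\langle -\Delta w, v\rangle_{H^{-1}} = (w,v)_{L^2}$. The quartic term $-\int v^4 \le 0$ is favorable and can be discarded. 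The dangerous term is $-3\int \tilde u^\xi v^3$, which I bound by $3\norm{\tilde u^\xi}_\infty \int |v|^3 \le C\int |v|^3$ since $\norm{\tilde u^\xi}_\infty = \CO(1)$.

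Next I would estimate $\int_\Omega |v|^3$. This is where the smallness hypothesis $\norm{v}_{H^{-1}} < c_0\eps^4$ enters. I interpolate: $\int |v|^3 \le \norm{v}_\infty \norm{v}_{L^2}^2$, and then use a smoothing/interpolation inequality relating $\norm{v}_\infty$ (and $\norm{v}_{L^2}$) to $\norm{v}_{H^{-1}}$ together with higher Sobolev norms of $v$. Since $v = u - \tilde u^\xi$ with $u$ spatially regular and $\tilde u^\xi \in C^4(\bar\Omega)$ with derivative bounds in powers of $\eps^{-1}$ (Theorem \ref{thm:slowmf}(iii)), one has an a priori bound $\norm{v}_{H^s} \le C\eps^{-m}$ for suitable $s,m$ — such estimates are collected in section \ref{sec:est}. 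Then a Gagliardo–Nirenberg type inequality of the schematic form $\norm{v}_\infty \le C\norm{v}_{H^{-1}}^\theta \norm{v}_{H^s}^{1-\theta}$ and $\norm{v}_{L^2}^2 \le C\norm{v}_{H^{-1}}^{2\tau}\norm{v}_{H^s}^{2(1-\tau)}$ yields
\[
\int_\Omega |v|^3 \le C\,\eps^{-m'}\,\norm{v}_{H^{-1}}^{\gamma}
\]
for some exponent $\gamma > 2$. Because $\norm{v}_{H^{-1}} < c_0\eps^4$, the extra $\norm{v}_{H^{-1}}^{\gamma-2}$ factor beats the $\eps^{-m'}$ prefactor once $c_0$ is small and the numerology of the exponents works out, giving $C\int |v|^3 \le \tfrac{C}{2}\eps^2\norm{v}_{L^2}^2$ — I would need to check that $4(\gamma-2) \ge m' + 2$, which fixes how the exponent $4$ in the hypothesis is chosen.

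Finally, combining the two bounds,
\[
\langle \CL^\xi v,v\rangle_{H^{-1}} + \langle \CN(\tilde u^\xi,v),v\rangle_{H^{-1}}
\le -C\eps^2\norm{v}_{L^2}^2 + \tfrac{C}{2}\eps^2\norm{v}_{L^2}^2
\le -\tfrac{C}{2}\eps^2\norm{v}_{L^2}^2 ,
\]
and then I downgrade $-\tfrac{C}{2}\eps^2\norm{v}_{L^2}^2 \le -C'\eps\norm{v}_{H^{-1}}^2$ using $\norm{v}_{H^{-1}} \le C\norm{v}_{L^2}$ (Poincaré, or $\norm{(-\Delta)^{-1/2}\cdot}_{L^2} \le C\norm{\cdot}_{L^2}$ on $L^2_0$), at the cost of one power of $\eps$ absorbed into the constant versus $\eps^2$ — actually one should be careful here: $\norm{v}_{H^{-1}}^2 \le C\norm{v}_{L^2}^2$ gives $-\eps^2\norm{v}_{L^2}^2 \le -C\eps^2\norm{v}_{H^{-1}}^2$, which is even stronger than the claimed $-C\eps\norm{v}_{H^{-1}}^2$, so the stated estimate follows a fortiori. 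The main obstacle is the interpolation step: getting the exponent $\gamma$ strictly above $2$ with a controlled $\eps$-power on the prefactor, which is exactly what forces the specific threshold $\eps^4$ in the hypothesis and relies on the a priori higher-regularity bounds for $v$ from section \ref{sec:est}.
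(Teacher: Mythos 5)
There are two genuine gaps here. First, the absorption of the cubic term $-3\int_\Omega \tilde u^\xi v^3$ cannot be carried out the way you propose, because the a priori bound $\norm{v}_{H^s}\le C\eps^{-m}$ that your Gagliardo--Nirenberg step needs does not exist: section \ref{sec:est} only bounds derivatives of $\tilde u^\xi$ and $\psi^\xi_i$, not of the stochastic perturbation $v$, and no quantitative higher-order regularity of $v$ is assumed in the lemma. The paper closes this loop without any external regularity by making the linear operator pay for the nonlinearity: writing $\langle \CL^\xi v,v\rangle_{H^{-1}}$ as a convex combination $\sum_i\gamma_i\langle \CL^\xi v,v\rangle_{H^{-1}}$ and using, besides the two spectral bounds, the integration-by-parts identity $\langle \CL^\xi v,v\rangle_{H^{-1}}\le -\eps^2\norm{\nabla v}_{L^2}^2+C\norm{v}_{L^2}^2$ with weight $\gamma_3\approx\eps^2$, one extracts a dissipative term $-c\eps^4\norm{v}_{H^1}^2$. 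The nonlinearity is then bounded by $C\norm{v}_{L^3}^3\le C\norm{v}_{H^{1/3}}^3\le C\norm{v}_{H^{-1}}\norm{v}_{H^1}^2\le Cc_0\eps^4\norm{v}_{H^1}^2$ (Sobolev embedding $H^{1/3}\hookrightarrow L^3$ in $d=2$ plus interpolation of $H^{1/3}$ between $H^{-1}$ and $H^1$), so the uncontrolled $\norm{v}_{H^1}^2$ appears on both sides and cancels for $c_0$ small. This is exactly why the threshold is $\eps^4$; no bound on $\norm{v}_{H^1}$ is ever needed.

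Second, your concluding step is wrong in the direction of the inequality: from $-C\eps^2\norm{v}_{L^2}^2\le -C\eps^2\norm{v}_{H^{-1}}^2$ you cannot conclude $\le -C\eps\norm{v}_{H^{-1}}^2$, since $\eps^2<\eps$ and hence $-C\eps^2 x> -C\eps x$ for $x>0$; the bound you obtain is strictly \emph{weaker} than the claim, not stronger. The factor $\eps$ (rather than $\eps^2$) matters downstream, since the decay rate $a=\CO(\eps)$ enters the definition of $q$ in \eqref{e:defq} and hence the admissible noise strength in Theorem \ref{thm:H-1stab}. To get it you must use the weight $\gamma_1$ with the $H^{-1}$ spectral gap of the linearized Cahn--Hilliard operator itself, $\lambda_3^\xi\ge C'\eps$ from Theorem \ref{LinCH}(ii), applied to $v\perp_{H^{-1}}\psi_1^\xi,\psi_2^\xi$, which yields $\langle\CL^\xi v,v\rangle_{H^{-1}}\le -C\eps\norm{v}_{H^{-1}}^2$ directly; your argument only ever invokes the Allen--Cahn gap $\mu_3\ge C\eps^2$ via Remark \ref{rem:MCAC}. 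Your explicit computation $\langle\CN(\tilde u^\xi,v),v\rangle_{H^{-1}}=-3\int_\Omega\tilde u^\xi v^3-\int_\Omega v^4$ and the observation that the quartic term has a good sign are correct and slightly sharper than the paper's crude bound $\le C\norm{v}_{L^3}^3$, but they do not rescue the two issues above.
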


\begin{proof}
Let $\gamma_1, \gamma_2, \gamma_3 \geq 0$ with $\sum_i \gamma_i = 1$. First, we notice that we have
\begin{equation*}
\langle \CL^\xi v,v \rangle_{H^{-1}} = \langle \eps^2 \Delta v + F''(\tilde{u}^\xi) v,v \rangle_{L^2} 
\leq - \eps^2 \| \nabla v \|_{L^2}^2 + C \| v \|_{L^2}^2,
\end{equation*}
where we performed integration by parts. Together with the spectral information of Theorems \ref{LinCH}
and \ref{LinAC} for the linearized Cahn-Hilliard operator in $H^{-1}$ and the linearized non-local Allen-Cahn
operator in $L^2$ we derive
\begin{align}
\label{ineq:Split}
\langle \CL^\xi v,v \rangle_{H^{-1}} &= \sum_i \gamma_i \langle \CL^\xi v,v \rangle_{H^{-1}} \nonumber \\
&\leq - C \gamma_1 \eps \| v \|_{H^{-1}}^2 - C \gamma_2 \eps^2 \| v \|_{L^2}^2 
- \gamma_3 \eps^2 \| \nabla v \|_{L^2}^2 + C \gamma_3 \| v \|_{L^2}^2 \nonumber \\
&\leq - c \eps \| v \|_{H^{-1}}^2 - c \eps^2 \| v \|_{L^2}^2 - c \eps^4 \| v \|_{H^1}^2,
\end{align}
where we fixed $\gamma_3 \approx \eps^2$ and absorbed the positive $L^2$-term into its negative
counterpart. \\
 As long as $\| v \|_{H^{-1}} \leq c_0 \eps^4$ we have
\begin{align*}
\langle \CN(\tilde{u}^\xi,v),v \rangle_{H^{-1}} 
&\leq C \| v \|_{L^3}^3 \leq C \| v \|_{H^{1/3}}^3 \\
&\leq C \| v \|_{H^1}^2 \| v \|_{H^{-1}} 
\leq C  c_0 \eps^4 \| v \|_{H^1}^2.
\end{align*}
Here, we used $H^{1/3}(\Omega) \hookrightarrow L^3(\Omega)$ by Sobolev embedding and interpolation of
$H^{1/3}$ between $H^{-1}$ and $H^1$. 
Combined with (\ref{ineq:Split}) we get by choosing $c_0$ sufficiently small compared to the other constants
\begin{equation*}
\langle \CL^\xi v, v \rangle_{H^{-1}} + \langle \CN(\tilde{u}^\xi,v),v \rangle_{H^{-1}}
\leq - c \eps \| v \|_{H^{-1}}^2 - c \eps^2 \| v \|_{L^2}^2 - C\eps^4 \| \nabla v \|_{L^2}^2\;.
\end{equation*}
\end{proof}
We need to control the terms of (\ref{eq:vdv}) containing inner products with first derivatives of 
$u^\xi$ and $\tilde{u}^\xi$, respectively. As $\tilde{u}^\xi_i$ can be seen as approximation of the
eigenfunctions $\psi^\xi_i$ together with the orthogonality condition (\ref{eq:Fermi}), we may
assume that up to some exponentially small error $v \perp \frac{\partial u^\xi}{\partial \xi_i}$.

\begin{lemma}
\label{ApproxOrtho}
Let $v$ be as in Proposition \ref{thm:Fermi}. Then we have
\begin{align*}
\Big\langle \frac{\partial u^\xi}{\partial \xi_i}, v \Big\rangle_{H^{-1}} = \CO(\exp )\Vert v \Vert_{H^{-1}}, \quad i = 1,2,
\end{align*}
and the same holds true for $u^\xi$ replaced by $\tilde{u}^\xi$.
\end{lemma}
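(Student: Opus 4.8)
The plan is to propagate the \emph{exact} $H^{-1}$-orthogonality $\langle v,\psi_i^\xi\rangle=0$ supplied by Proposition~\ref{thm:Fermi} into an \emph{approximate} orthogonality against $\tilde u_i^\xi=\partial_{\xi_i}\tilde u^\xi$, and then to pass from $\tilde u^\xi$ to $u^\xi$ using that the difference $v^\xi=\tilde u^\xi-u^\xi$ and all its $\xi$-derivatives are $\CO(\exp)$.

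First I would invoke the expansion already established in the comment to Theorem~\ref{LinCH}: since $\bar d(E,F)=\CO(\exp)$ for $E=\mathrm{span}\{\tilde u_1^\xi/\|\tilde u_1^\xi\|,\tilde u_2^\xi/\|\tilde u_2^\xi\|\}$ and $F=\mathrm{span}\{\psi_1^\xi,\psi_2^\xi\}$, and $\|\tilde u_j^\xi\|\le C\rho=\CO(1)$, one has
\[
\tilde u_j^\xi=\sum_{k=1}^2\langle\tilde u_j^\xi,\psi_k^\xi\rangle_{H^{-1}}\,\psi_k^\xi+R_j,\qquad \|R_j\|_{H^{-1}}=\CO(\exp).
\]
Taking the $H^{-1}$ inner product with $v$ and using $\langle v,\psi_k^\xi\rangle_{H^{-1}}=0$ for $k=1,2$, every leading term vanishes and Cauchy--Schwarz gives
\[
\bigl|\langle\tilde u_j^\xi,v\rangle_{H^{-1}}\bigr|=\bigl|\langle R_j,v\rangle_{H^{-1}}\bigr|\le\|R_j\|_{H^{-1}}\,\|v\|_{H^{-1}}=\CO(\exp)\,\|v\|_{H^{-1}},
\]
which is the assertion with $\tilde u^\xi$ in place of $u^\xi$.

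For the statement with $u^\xi$ I would write $\partial_{\xi_j}u^\xi=\tilde u_j^\xi-\partial_{\xi_j}v^\xi$. Since $\xi$ stays in $\Omega_{\rho+\delta}$, the boundary distance $d^\xi$ appearing in Theorem~\ref{thm:slowmf} is bounded away from $0$, so Theorem~\ref{thm:slowmf}(i),(iii) give $\|\partial_{\xi_j}v^\xi\|_\infty=\CO(\exp)$, and hence, on the bounded domain $\Omega$,
\[
\|\partial_{\xi_j}v^\xi\|_{H^{-1}}\le C\|\partial_{\xi_j}v^\xi\|_{L^2}\le C|\Omega|^{1/2}\|\partial_{\xi_j}v^\xi\|_\infty=\CO(\exp).
\]
Combining this with the previous paragraph and Cauchy--Schwarz,
\[
\langle\partial_{\xi_j}u^\xi,v\rangle_{H^{-1}}=\langle\tilde u_j^\xi,v\rangle_{H^{-1}}-\langle\partial_{\xi_j}v^\xi,v\rangle_{H^{-1}}=\CO(\exp)\,\|v\|_{H^{-1}}.
\]

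The argument is little more than bookkeeping, and the only things to watch are that all the conversion constants --- the $H^{-1}\hookleftarrow L^2\hookleftarrow L^\infty$ embedding constants on $\Omega$ and the factors $\|\tilde u_j^\xi\|=\CO(\rho)$ --- are $\CO(1)$ (at worst polynomially bounded in $\eps^{-1}$), so they cannot spoil the exponential smallness, and that the standing assumption $\xi(t)\in\Omega_{\rho+\delta}$ is precisely what makes the estimates of Theorem~\ref{thm:slowmf} genuinely of order $\CO(\exp)$ uniformly in the relevant range of $\xi$. No serious obstacle is expected.
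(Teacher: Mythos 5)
Your argument is correct and is essentially the paper's own proof: both expand $\tilde u_j^\xi$ in the basis $\psi_1^\xi,\psi_2^\xi$ up to an $\CO(\exp)$ remainder (via the Theorem~\ref{H-Sj} distance estimate), kill the leading terms with the orthogonality $\langle v,\psi_k^\xi\rangle=0$, and then pass from $\tilde u^\xi$ to $u^\xi$ using $\|\tilde u_j^\xi-u_j^\xi\|=\CO(\exp)$. Your write-up merely spells out the $L^\infty\to H^{-1}$ bookkeeping for $\partial_{\xi_j}v^\xi$ that the paper leaves implicit.
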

\begin{proof}
From \ref{LinCH} and \ref{H-Sj} we see that the distance of $U^\xi = \textrm{span}\{ \psi^\xi_1,\psi^\xi_2 \}$ and 
$\textrm{span}\{ \tilde{u}^\xi_1, \tilde{u}^\xi_2  \}$ is of order $\CO(\exp)$. Therefore, for some $\alpha_j \in \R$
\begin{align*}
\langle \tilde{u}^\xi_j , v \rangle = \sum_j \alpha_j \langle \psi^\xi_j, v \rangle + \langle \CO(\exp), v \rangle = \CO (\exp) \Vert v \Vert_{H^{-1}}.
\end{align*}
With $\Vert \tilde{u}^\xi_j - u^\xi_j \Vert = \CO(\exp)$ the lemma is derived.
\end{proof}

Finally, we can continue with estimating $\langle v, dv \rangle$. By Lemmata \ref{est: Nonlinearity} and
\ref{ApproxOrtho} together with the estimate for the second derivatives of $\tilde{u}^\xi$ we derive
\begin{align}
\label{est:vdv}
\begin{split}
\langle v,dv \rangle
\leq & [-C\eps \Vert v \Vert_{H^{-1}}^2 + \CO(\exp) \Vert v \Vert_{H^{-1}}
+ \CO( \eps^{-1/2} \eta_1) \Vert v \Vert_{H^{-1}}]dt
\\& + \langle v + \CO(\exp), dW \rangle.
\end{split}
\end{align}
Here, we also used that the drift term of $d \xi$ is of order $\CO(\eps^{-1})$ 
which we proved in Lemma \ref{lemma:drift}. Thereby with lemma \ref{ApproxOrtho}, 
the term $\sum_j \langle \tilde{u}_j^\xi , v \rangle d\xi_j$ 
remains exponentially small. \\
We summarize the $H^{-1}$ estimate in the following theorem:

\begin{theorem}
\label{thm:DiffIneq}
As long as $\Vert v \Vert_{H^{-1}} \leq c_0 \eps^{4}$ with $c_0>0$ from Lemma \ref{est: Nonlinearity} it holds that
\begin{align}
d \Vert v \Vert_{H^{-1}}^2 \leq \left[ C_\eps - C \eps \Vert v \Vert_{H^{-1}}^2 \right] \, dt 
+ 2 \langle v + \CO(\exp),  dW \rangle_{H^{-1}},
\end{align}

where
\begin{align*}
C_\eps =  C \eta_0 + \CO( \exp ).
\end{align*}
\end{theorem}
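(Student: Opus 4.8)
The plan is to read off the claimed It\^o differential inequality directly from the two bounds already in hand: \eqref{est:dvdv} for the quadratic variation and \eqref{est:vdv} for $\langle v,dv\rangle$. By It\^o's formula applied to the scalar process $t\mapsto\|v(t)\|_{H^{-1}}^2$ we have $d\|v\|_{H^{-1}}^2 = 2\langle v,dv\rangle + \langle dv,dv\rangle$, so the statement is really just the bookkeeping step that packages the analytic work of the preceding lemmas into one scalar estimate; nothing new needs to be proved, only assembled. Throughout we stay in the regime $\|v\|_{H^{-1}}\le c_0\eps^4$ and $\xi(t)\in\Omega_{\rho+\delta}$, which is exactly the hypothesis.

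First I would insert \eqref{est:dvdv}, which gives $\langle dv,dv\rangle \le \CO(\eta_0)\,dt$ (using $\|\partial_\xi\tilde u^\xi\|=\CO(1)$, $\|\sigma\|=\CO(1)$ from Lemma \ref{lemma:var}, and $\|\CQ\|_{L(H^{-1})}=\eta_1\le\eta_0$). Then I would insert \eqref{est:vdv}, which already absorbs Lemma \ref{est: Nonlinearity} (the negative definiteness $\langle\CL^\xi v,v\rangle+\langle\CN(\tilde u^\xi,v),v\rangle\le -C\eps\|v\|_{H^{-1}}^2$, itself resting on the Allen-Cahn spectral gap via Remark \ref{rem:MCAC}), Lemma \ref{ApproxOrtho} (the terms $\sum_j\langle\tilde u^\xi_j,v\rangle\,d\xi_j$ and $\sum_j c^\xi_j\langle u^\xi_j,v\rangle$ are $\CO(\exp)\|v\|_{H^{-1}}$, also using the drift bound $|f(\xi,v)|\le C\eps^{-1}\eta_1$ from Lemma \ref{lemma:drift}), and the second-derivative estimate $\|\tilde u^\xi_{ij}\|=\CO(\eps^{-1/2})$ controlling $\frac12\sum_{i,j}\langle\tilde u^\xi_{ij},v\rangle\langle\CQ\sigma_i,\sigma_j\rangle$. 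Multiplying \eqref{est:vdv} by $2$ yields
\[
2\langle v,dv\rangle \le \big[-C\eps\|v\|_{H^{-1}}^2 + \CO(\exp)\|v\|_{H^{-1}} + \CO(\eps^{-1/2}\eta_1)\|v\|_{H^{-1}}\big]\,dt + 2\langle v+\CO(\exp),dW\rangle_{H^{-1}}.
\]

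The only remaining step is to dispose of the two linear-in-$\|v\|$ cross terms. The simplest route is to use the a priori bound $\|v\|_{H^{-1}}\le c_0\eps^4$ of the hypothesis to turn each into a pure constant: $\CO(\exp)\|v\|_{H^{-1}}\le\CO(\exp)$, and $\CO(\eps^{-1/2}\eta_1)\|v\|_{H^{-1}}\le C\eps^{7/2}\eta_1\le C\eta_0$ (using $\eta_1\le\eta_0$ and $\eps<\eps_0$); if one prefers to keep the $\eta_1$-dependence explicit one may instead split off $\tfrac{C\eps}{2}\|v\|_{H^{-1}}^2$ by Young's inequality and absorb it into the $-C\eps\|v\|_{H^{-1}}^2$ term at the cost of halving the constant. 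Adding the contribution $\CO(\eta_0)\,dt$ from the quadratic variation and collecting all the constant drift terms into $C_\eps = C\eta_0 + \CO(\exp)$ gives the asserted inequality, with the diffusion term $2\langle v+\CO(\exp),dW\rangle_{H^{-1}}$ carried through verbatim. I do not anticipate a genuine obstacle here — the hard estimates (spectral gap, nonlinearity, near-orthogonality, drift magnitude) have all been discharged earlier; the only minor point worth a sentence is that the martingale part has coefficient $v+\CO(\exp)$ rather than $v$, which is immaterial for the inequality and is precisely why the exponentially small correction appears inside the stochastic integral in the statement.
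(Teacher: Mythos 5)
Your proposal is correct and follows essentially the same route as the paper: apply It\^o's formula $d\|v\|_{H^{-1}}^2 = 2\langle v,dv\rangle + \langle dv,dv\rangle$, insert the bounds \eqref{est:dvdv} and \eqref{est:vdv}, and then use $\|v\|_{H^{-1}}\le c_0\eps^4$ together with $\eta_1\le\eta_0$ to absorb the term $C\eps^{-1/2}\eta_1\|v\|_{H^{-1}}\le c_0\eps^{7/2}\eta_0$ into $C_\eps$. The paper's proof is exactly this assembly step, so no further comment is needed.
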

\begin{proof}
By (\ref{est:dvdv}) and (\ref{est:vdv}) we have
\begin{align*}
d \| v \|_{H^{-1}}^2 \leq &\Big[ - C \eps \| v \|_{H^{-1}}^2 + C \eps^{-1/2}\eta_1 
\| v \|_{H^{-1}} + C \eta_0 + \CO(\exp) \Big] \, dt \\
&+ 2 \langle v + \CO(\exp), dW \rangle.
\end{align*}
As $\eta_1\leq\eta_0$ we obtain
\begin{align*}
\eps^{-1/2} \eta_1 \Vert v \Vert_{H^{-1}} 
\leq c_0 \eps^{7/2} \eta_0  
\end{align*}
and thereby the claim.
\end{proof}
%


\subsection{Long-time stability in \boldmath$H^{-1}$}

We follow a method used in \cite{DropletAC} for the stochastic Allen-Cahn equation to show the long-time
stability with respect to the $H^{-1}$ norm. \\
Define the stopping time $\tau^\star$ as the exit time from a neighborhood of the slow manifold before
time $T$
\[
\tau^\star := \inf \left\{ t \in [0,T] : \| v(t) \|_{-1} > B \right\} 
\]
with the convention that $\tau^\star = T$, if $\| v(t) \|_{-1} \leq B$ for all $t \in [0,T]$.
Note that we neglect the case that $\xi(t)\not\in\Omega_{\delta+\rho}$ at some point.
We only need to cut with another stopping time to take care of this.

We showed in Theorem \ref{thm:DiffIneq} that $v$ satisfies a differential inequality of the form
\begin{equation}
\label{DiffIneq}
  \mathrm{d} \| v(t) \|^2_{-1} 
  \leq 
  \left[ C_\eps - a \| v(t) \|^2_{-1} \right] \mathrm{d}t 
  + 2 \left( v, \mathrm{d}W \right)
\end{equation}
for all $t \leq \tau^\star$, provided that $B \leq c_0 \eps^{4}$. 

From \cite{DropletAC} using optimal stopping of martingales, we obtain from \eqref{DiffIneq} 
\begin{equation}
\label{Ito1}
\E \| v(\tau^\star) \|^{2p}_{-1} 
\leq 
\| v(0) \|^{2p}_{-1} + C \Big[ C_\eps + \| \CQ \| \Big] \E \int_0^{\tau^\star}\|v\|^{2p-2}_{-1} \mathrm{d}s
\end{equation}
and
\begin{equation}
\label{Ito2}
a \E \int_0^{\tau^\star}\|v\|^{2p}_{-1} \mathrm{d}s 
\leq 
\frac{1}{p} \| v(0) \|^{2p}_{-1} + C \Big[ C_\eps + \| \CQ \| \Big] \E \int_0^{\tau^\star}\|v\|^{2p-2}_{-1} \mathrm{d}s.
\end{equation}
We define now $q$ and assume the following
\begin{equation}
\label{e:defq}
q := \frac{C_\eps + \| \CQ \|}{a} \ll 1 
\quad\quad \text{and} \quad\quad 
\| v(0) \|^2 \leq q \ll B^2.
\end{equation}
Via an induction argument we derive
\[
\tfrac{1}{p} \, \E \| v(\tau^\star) \|^{2p} 
\leq
Cq^p + Caq^pT
\]
as $C_\eps \leq aq$. Chebychev's inequality finally yields
\begin{align}
\label{Cheb}
\prob \left( \tau^\star < T \right) &= \prob \left( \| v(\tau^\star) \| \geq B \right) 
\leq B^{-2p} \cdot \E \| v(\tau^\star) \|^{2p} \nonumber \\
&\leq CB^{-2p} \Big[ q^p + aq^p T \Big] = C \left( \tfrac{q}{B^2} \right)^p + C a \left( \tfrac{q}{B^2} \right)^p T.
\end{align}
With this, we can prove the following theorem:
\begin{theorem}
\label{thm:H-1stab}
For a solution $u=u^\xi+v$ with $\xi\in\Omega_{\rho+\delta}$ and $v\perp \psi^\xi_j$ 
consider the exit time 
\[
\tau^\star = \inf \left\{ t \in [0,T_\eps] \, : \, \| v(t) \|_{-1} > c_0 \eps^4 \right\},
\]
with $T_\eps = \eps^{-N}$ for any fixed large $N > 0$ and $c_0>0$ from Lemma \ref{est: Nonlinearity}. 
Fix with $\nu<c_0$
\[
\| v(0) \|_{-1} \leq \nu \eps^{4}.
\]
Also, assume that the noise strength satisfies
\[
\eta_0 \leq C \eps^{9 + \tilde{k}},
\]
for some $\tilde{k} > 0$ very small. 
Then the probability $ \prob \left( \tau^\star < T_\eps \right)$ is smaller than any power  of $\eps$, 
as $\eps$ tends to $0$. 
\end{theorem}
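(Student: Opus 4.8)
The plan is to run the optimal-stopping machinery built in the previous subsection around \eqref{Ito1}, \eqref{Ito2} and the Chebyshev bound \eqref{Cheb}, and then merely track the powers of $\eps$ hidden in the constants. In the notation there I take the relaxation rate $a = C\eps$ and the inhomogeneity $C_\eps = C\eta_0 + \CO(\exp)$ from Theorem~\ref{thm:DiffIneq}, the tube radius $B = c_0\eps^4$ from the definition of $\tau^\star$, and the horizon $T = T_\eps = \eps^{-N}$.

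First I would check the smallness conditions \eqref{e:defq}. Since $\norm{\CQ}_{L(H^{-1})} = \eta_1 \le \eta_0$ and the noise hypothesis is $\eta_0 \le C\eps^{9+\tilde k}$,
\[
q \;=\; \frac{C_\eps + \norm{\CQ}}{a} \;\le\; \frac{C\eta_0 + \CO(\exp)}{C\eps} \;\le\; C\eps^{8+\tilde k},
\]
so $q \ll 1$, the bound $C_\eps \le aq$ holds automatically, and, most importantly,
\[
\frac{q}{B^2} \;\le\; \frac{C\eps^{8+\tilde k}}{c_0^2\,\eps^8} \;=\; C\,\eps^{\tilde k} \;\longrightarrow\; 0 \qquad (\eps\to 0),
\]
which in particular gives $q \ll B^2$. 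Hence everything in the previous subsection applies, \emph{provided} the initial datum already lies at the equilibrium scale.

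That last proviso is the one genuine issue: the theorem only gives $\norm{v(0)}_{-1}^2 \le \nu^2\eps^8 < B^2$, which is much larger than $q$, whereas \eqref{e:defq} demands $\norm{v(0)}^2 \le q$. To bridge the gap I would insert a short relaxation phase. As long as $\norm{v(t)}_{-1}^2 \ge q$ and $t \le \tau^\star$, the drift in \eqref{DiffIneq} obeys $C_\eps - a\norm{v}_{-1}^2 \le a\big(q - \norm{v}_{-1}^2\big) \le 0$, so $\norm{v(t)}_{-1}^2$ is, up to its small quadratic variation $\le 4\eta_1\norm{v}_{-1}^2\,\mathrm{d}t \le 4\eta_1 B^2\,\mathrm{d}t$, a supermartingale. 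A Doob/Bernstein estimate then shows that on a time interval of length $\eps^{-1}|\log\eps|^2 \ll T_\eps$ either the tube is left --- an event of probability smaller than any power of $\eps$, precisely because $\eta_0$ is this small --- or $\norm{v}_{-1}^2$ has been brought down to the scale $q\,|\log\eps|^2$; restarting the clock there puts us into the regime of \eqref{e:defq} up to a harmless polylogarithmic loss.

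With the initial datum at scale $q\cdot\mathrm{polylog}(1/\eps)$, the induction behind \eqref{Cheb} gives, for every fixed $p \in \N$,
\[
\prob\big(\tau^\star < T_\eps\big) \;\le\; C_p\Big(\tfrac{q}{B^2}\Big)^{p}\big(1 + aT_\eps\big)\,\mathrm{polylog}(1/\eps) \;\le\; C_p\,\eps^{p\tilde k}\,\big(1 + C\eps^{1-N}\big)\,\mathrm{polylog}(1/\eps).
\]
Given any $M > 0$, choosing $p$ with $p\tilde k > M + N$ makes the right-hand side $\le \eps^{M}$ for all sufficiently small $\eps$, which is exactly the claim. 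The main obstacle in this program is the relaxation step: the moment estimates \eqref{Ito1}--\eqref{Ito2} are genuinely lossy when the starting distance exceeds $q$, so one has to use separately the negative sign of the drift above level $q$, together with the smallness of $\eta_0$, to return to that scale well before the polynomial time $T_\eps$ begins --- and it is this requirement, $\eta_0 \ll aB^2 \sim \eps^{9}$, that forces the exponent $9+\tilde k$ in the noise condition.
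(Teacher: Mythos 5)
Your core argument is exactly the paper's proof: the paper, too, simply computes $q=(C_\eps+\norm{\CQ})/a\leq C\eps^{-1}[\eta_0+\CO(\exp)]$, uses $a=\CO(\eps)$, $B=c_0\eps^4$ and $\eta_0\leq C\eps^{9+\tilde k}$ to get $q/B^2=\CO(\eps^{\tilde k})$, and then invokes \eqref{Cheb} with $p$ large enough to beat the factor $aT_\eps=\CO(\eps^{1-N})$. Where you go beyond the paper is in noticing that \eqref{Cheb} was derived under the standing assumption \eqref{e:defq}, which requires $\norm{v(0)}^2\leq q\sim\eps^{8+\tilde k}$, whereas the theorem only assumes $\norm{v(0)}_{-1}\leq\nu\eps^4$, i.e. $\norm{v(0)}^2$ of the same order as $B^2$; the paper silently ignores this mismatch (its moment bounds \eqref{Ito1}--\eqref{Ito2} would otherwise only give a probability bound of order $(\nu/c_0)^{2p}$, which does not vanish with $\eps$). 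Your proposed fix — exploit the negative drift in \eqref{DiffIneq} above level $q$ during an initial relaxation window of length $\CO(\eps^{-1}\log^2(1/\eps))$ and then restart the moment/Chebyshev machinery — is a sound way to close this, though two caveats: first, the level actually reachable is set by the martingale fluctuations, of order $\sqrt{\eta_1 B^2/a}\,\mathrm{polylog}\sim\eps^{8+\tilde k/2}\,\mathrm{polylog}$, not $q\log^2(1/\eps)$ as you claim; this is harmless, since rerunning the induction with $q$ replaced by $\max\{q,\norm{v(0)}^2\}$ only changes $\eps^{\tilde k}$ into $\eps^{\tilde k/2}$ in the final bound, but you should state it that way. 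Second, the Doob/Bernstein step and the restart at a random time (strong Markov/conditioning plus a union over the two phases) are only asserted, not carried out; alternatively, one can avoid the relaxation phase altogether by strengthening the hypothesis to $\norm{v(0)}^2\leq q$ as in \eqref{e:defq}, which is what the paper's literal argument uses. So: same route as the paper for the main estimate, plus a sensible (if only sketched) repair of a hypothesis gap that the paper does not address.
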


And thus for very large time scales with high probability the solution stays 
close to the slow manifold $\tilde{\CM}_\rho^\eps$.
Unless the droplet gets close to the boundary, i.e. $\xi(t)\not\in\Omega_{\delta+\rho}$.

\begin{remark}
 In Remark \ref{rem:ItoStra} we saw that for $\eta_0$ being polynomial in $\eps$ 
the position $\xi$ of the droplet is moving like a diffusion process driven by a 
Wiener process of strength $\sqrt{\eta_0}$ which is 
 multiplied by a diffusion coefficient of order $\CO(1)$. Thus due to scaling, we would expect 
 that the droplet hits the boundary of the domain after time scales of order larger that $1/\eta_0$. 
 
 Thus the stability result tells us that with overwhelming probability the solution moves along 
 the deterministic slow manifold until it hits the boundary of the domain.
 \end{remark}

\begin{proof}
The statement follows directly from (\ref{Cheb}) if 
$\frac{q}{B^2} = \CO (\eps^{\tilde{k}})$.

Indeed, using the definition of $C_\eps$, $a = \CO(\eps)$ and $B = \CO(\eps^{4})$, we have
\[
q := \frac{C_\eps + \| \CQ \|}{a} \leq C\frac1{\eps} \Big[\eta_0  + \CO(\exp) \Big],
\]
since $\eta_1 \leq \eta_0$.
And therefore we finally get
\[
q/B^2 \leq C \eps^{-9}\Big[ \eta_0 + \CO(\exp) \Big]= \CO( \eps^{\tilde{k}}).
\]
\end{proof}

We can 
also treat smaller neighboorhoods of the slow manifold,
by making the size  of the  noise even smaller.

We can take the radius $B=\eps^m$ and the noise  strength 
$\eta_0=\eps^{2m+1+\tilde\kappa}$. 
If $m>4$, then we can follow exactly the same proof, as all estimates 
needed just $B\leq c_0\eps^4$. We obtain:

\begin{theorem}
\label{thm:H-1stabnew}
For a solution $u=u^\xi+v$ with $\xi\in\Omega_{\rho+\delta}$ and $v\perp \psi^\xi_j$ 
consider the exit time 
\[
\tau^\star = \inf \left\{ t \in [0,T_\eps] \, : \, \| v(t) \|_{-1} > \eps^m \right\},
\]
with $T_\eps = \eps^{-N}$ for any fixed large $N > 0$ and $m>4$. 
Fix with $\nu<1$
\[
\| v(0) \|_{-1} \leq \nu \eps^{4}.
\]
Also, assume that the noise strength satisfies
\[
\eta_0 \leq C \eps^{2m+1+ \tilde{\kappa}},
\]
for any $\tilde{\kappa} > 0$ small. 
Then the probability $ \prob \left( \tau^\star < T_\eps \right)$ is smaller than any power  of $\eps$, 
as $\eps$ tends to $0$. 
\end{theorem}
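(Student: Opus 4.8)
The plan is to repeat verbatim the argument used for Theorem \ref{thm:H-1stab}, only checking that nothing in the chain of estimates relied on the specific exponent $4$ beyond the bound $B \le c_0 \eps^4$. First I would observe that the differential inequality \eqref{DiffIneq} from Theorem \ref{thm:DiffIneq}, together with Lemma \ref{est: Nonlinearity} and the drift bound from Lemma \ref{lemma:drift}, is valid as long as $\|v\|_{-1} \le c_0 \eps^4$; since we now take the exit radius $B = \eps^m$ with $m > 4$, we have $B = \eps^m \le \eps^4 \le c_0 \eps^4$ for $\eps$ small (absorbing constants), so all the estimates leading to \eqref{Ito1} and \eqref{Ito2} hold unchanged for all $t \le \tau^\star$. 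Consequently the optimal-stopping/martingale argument from \cite{DropletAC}, the induction on $p$, and Chebyshev's inequality give exactly \eqref{Cheb}:
\[
\prob(\tau^\star < T_\eps) \le C\left(\tfrac{q}{B^2}\right)^p + C a \left(\tfrac{q}{B^2}\right)^p T_\eps,
\]
with $q = (C_\eps + \|\CQ\|)/a$ and $C_\eps = C\eta_0 + \CO(\exp)$, $a = \CO(\eps)$.

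Next I would just evaluate $q/B^2$ with the new parameters. Since $a = \CO(\eps)$ and $\|\CQ\|_{L(H^{-1})} = \eta_1 \le \eta_0$, we get
\[
q = \frac{C_\eps + \|\CQ\|}{a} \le C\eps^{-1}\bigl[\eta_0 + \CO(\exp)\bigr],
\]
and with $B^2 = \eps^{2m}$ this yields
\[
q/B^2 \le C\eps^{-2m-1}\bigl[\eta_0 + \CO(\exp)\bigr].
\]
Now the hypothesis $\eta_0 \le C\eps^{2m+1+\tilde\kappa}$ forces $q/B^2 \le C\eps^{\tilde\kappa} + \CO(\exp) = \CO(\eps^{\tilde\kappa})$, in particular $q/B^2 \ll 1$, so that also the condition \eqref{e:defq} (namely $q \ll 1$ and $\|v(0)\|^2 \le q \ll B^2$) is met once we note $\|v(0)\|_{-1} \le \nu\eps^4$ is even smaller than $B$ for $m>4$ and $\eps$ small. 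Plugging $q/B^2 = \CO(\eps^{\tilde\kappa})$ and $T_\eps = \eps^{-N}$ into \eqref{Cheb} gives
\[
\prob(\tau^\star < T_\eps) \le C\eps^{p\tilde\kappa} + C\eps \cdot \eps^{p\tilde\kappa} \cdot \eps^{-N} \le C\eps^{p\tilde\kappa - N},
\]
and since $p$ is a free parameter in \eqref{Ito1}--\eqref{Ito2} we may take $p$ as large as we like, making $p\tilde\kappa - N$ arbitrarily large; hence $\prob(\tau^\star < T_\eps)$ is smaller than any fixed power of $\eps$ as $\eps \to 0$.

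There is really only one point that needs genuine checking rather than bookkeeping, and that is the claim that every estimate feeding into \eqref{DiffIneq}, \eqref{Ito1}, \eqref{Ito2} and the induction used only $B \le c_0\eps^4$ and never the exact value $\eps^4$. I would go through Lemma \ref{est: Nonlinearity} (whose hypothesis is literally $\|v\|_{-1} < c_0\eps^4$), the bound \eqref{est:dvdv} on $\langle dv,dv\rangle$ (which gave $\CO(\eta_0)\,dt$ independently of $B$), and the bound \eqref{est:vdv} on $\langle v,dv\rangle$ (which used only the drift estimate $\CO(\eps^{-1})$ and Lemma \ref{ApproxOrtho}, again independent of $B$); since a smaller $B$ only makes the a-priori constraint easier to satisfy, the whole machinery transfers. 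The remaining steps are then purely a matter of substituting the new exponents, so I expect no real obstacle — the mild subtlety is simply to phrase the conclusion so that the freedom in $p$ is invoked after $N$ is fixed.
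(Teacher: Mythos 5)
Your overall route is exactly the paper's: the paper itself disposes of this theorem by remarking that one can ``follow exactly the same proof, as all estimates needed just $B\leq c_0\eps^4$,'' and your computation $q/B^2\leq C\eps^{-2m-1}\bigl[\eta_0+\CO(\exp)\bigr]=\CO(\eps^{\tilde\kappa})$ together with the Chebyshev bound \eqref{Cheb} and the freedom in $p$ is the intended argument. However, there is one concrete false step, and it occurs at precisely the point you yourself identify as needing ``genuine checking,'' namely the verification of \eqref{e:defq}. You assert that ``$\|v(0)\|_{-1}\leq\nu\eps^4$ is even smaller than $B$ for $m>4$ and $\eps$ small.'' This is backwards: for $m>4$ and $0<\eps<1$ one has $\eps^4>\eps^m=B$, so the hypothesis as written allows $\|v(0)\|_{-1}$ to exceed the exit radius, in which case $\tau^\star=0$ almost surely and $\prob(\tau^\star<T_\eps)=1$. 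The initial condition must be read (and was surely intended) as $\|v(0)\|_{-1}\leq\nu\eps^m$, in analogy with Theorem \ref{thm:H-1stab} where the initial datum is a fixed fraction of the exit radius; a blind proof should have flagged this rather than claimed the inequality in the wrong direction.

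Even after that correction, the second half of \eqref{e:defq}, namely $\|v(0)\|^2\leq q$, is not automatic and you do not actually check it: with $q=\CO(\eps^{-1}\eta_0)=\CO(\eps^{2m+\tilde\kappa})$ and $\|v(0)\|^2\leq\nu^2\eps^{2m}=\nu^2 B^2$, one has $\|v(0)\|^2\gg q$ in general. The condition $\|v(0)\|^{2p}\leq q^p$ is what lets the induction based on \eqref{Ito1}--\eqref{Ito2} collapse to $\E\|v(\tau^\star)\|^{2p}\leq Cpq^p(1+aT)$; without it the Chebyshev step produces an additional term of size $(\|v(0)\|^2/B^2)^p\leq\nu^{2p}$, which is a constant in $\eps$ and destroys the conclusion that the probability is smaller than any power of $\eps$. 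So to close the argument as written you must either strengthen the hypothesis to $\|v(0)\|^2\leq q$ (i.e.\ $\|v(0)\|_{-1}\lesssim\eps^{m+\tilde\kappa/2}$), or carry the term $\|v(0)\|^{2p}$ explicitly through the induction and impose $\|v(0)\|^2/B^2=\CO(\eps^{\kappa'})$ for some $\kappa'>0$. The remainder of your proof --- the observation that all estimates entering \eqref{DiffIneq} only used $B\leq c_0\eps^4$, the evaluation of $q/B^2$, and the choice of large $p$ after $N$ is fixed --- is correct and matches the paper.
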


\subsection{Estimates in \boldmath$L^2$-norm}

We want to extend the stability result to the $L^2$-norm. As there are no bounds of the
linearized Cahn-Hilliard operator in $L^2$, we will rely on the results of the previous section.

Recall \eqref{eq:dv},
\begin{align*}
dv = &\Big( \sum_j c_j^\xi u_j^\xi + \CL^\xi v + \CN(\tilde{u}^\xi,v) \Big) dt + dW \\
&- \sum_j  \tilde{u}_j^{\xi} d\xi_j 
-\tfrac12 \sum_{i,j}  \tilde{u}_{ij}^{\xi}  \langle \CQ\sigma^\xi_j,\sigma^\xi_i\rangle dt,
\end{align*}
where 
\begin{align*}
\CL^\xi v + \CN(\tilde{u}^\xi,v) = - \eps^2 \Delta^2 v
 + \Delta \left[ f(\tilde{u}^\xi+v) - f(\tilde{u}^\xi) \right].
\end{align*}

As our object of interest is the $L^2$-norm of $v$ we consider the relation
\begin{align}
d \Vert v \Vert_{L^2}^2 = 2 \left( v, dv \right)_{L^2} + \left( dv,dv \right)_{L^2}.
\end{align}
Recall that we denote the $L^2$ inner product by 
$\left(\cdot,\cdot \right)$ and the $H^{-1}$ inner product by
$\langle \cdot,\cdot \rangle$.

By series expansion of $W$ we obtain
\begin{align*}
\left( \tilde{u}_j^{\xi}, dW \right) \langle \sigma_i, dW \rangle &= 
\sum_{k=0}^\infty \alpha_k \left( \tilde{u}_j^{\xi}, e_k \right) \, d\beta_k
\sum_{l=0}^\infty \alpha_l \langle \sigma_i, e_l \rangle \, d\beta_l  \\
&= \sum_{k=0}^\infty \alpha_k \left( \tilde{u}_j^{\xi}, e_k \right) \alpha_k
\langle \sigma_i, e_l \rangle \, dt 
\leq \eta_0^{1/2} \eta_2^{1/2} \Vert \tilde{u}_j^{\xi} \Vert_{L^2}
\Vert \sigma_i \Vert_{H^{-1}} dt \\
 &= \CO( \eps^{-1} \eta_0 + \eta_2) \, dt,
\end{align*}
where we used the $H^{-1}$ estimate of $\sigma$ from the previous section and
$\Vert \tilde{u}_j^{\xi} \Vert_{L^2} = \CO( \eps^{-1/2})$, as the derivative $\tilde{u}^\xi_j$ is $\mathcal{O}(\eps^{-1})$ on a set of order $\mathcal{O}(\eps)$.

Thus, for the It\^o correction term we have
\begin{align*}
\left( dv, dv \right) &= \sum_{i,j} \left( \tilde{u}_i^{\xi},\tilde{u}_j^{\xi} \right)
\langle \CQ \sigma_i, \sigma_j \rangle \, dt - 2 \sum_i \left( \tilde{u}_i^{\xi}, dW
\right) \langle \sigma_i, dW \rangle + \left(dW, dW \right) \\
&\leq C \Vert \tilde{u}_i^{\xi} \Vert^2_{L^2} \Vert \sigma_i \Vert_{H^{-1}}^2 \eta_1
+ \CO( \eps^{-1} \eta_0 + \eta_2) \, dt + \trace(-\Delta \CQ) \, dt \\
&= \CO( \eps^{-1} \eta_0 + \eta_2) \, dt
\end{align*}
as $\eta_1 \leq \eta_0$.
\\

Next, we study the mixed term $\left( v, dv \right)$. By \eqref{eq:dv} we have
\begin{align*}
\left( v, dv \right) = 
&\left[ \sum_i (c_i-b_i) \left( \tilde{u}_i^{\xi},v \right) \right] \, dt
+ \left[ \left(v,dW \right) - \sum_i \left(\tilde{u}_i^{\xi},v \right)
\langle \sigma_i, dW \rangle \right] \\
&\left[ \left( - \eps^2 \Delta^2 v
 + \Delta ( f(\tilde{u}^\xi+v) - f(\tilde{u}^\xi) \right),v ) \right] \,dt
- \frac12 \sum_{i,j} \left( \tilde{u}_{ij}^{\xi},v \right) \langle \CQ \sigma_i, 
\sigma_j \rangle \, dt \\
&=: T_1 + T_2 + T_3 + T_4.
\end{align*}
For the martingale term we see that 
\begin{align*}
T_2 &= \langle \CO(\eps^{-1/2} \Vert v \Vert_{L^2}), dW \rangle_{H^{-1}} 
+ \langle (-\Delta)^{1/2}v, d(-\Delta)^{1/2} W \rangle_{H^{-1}} \\
&= \langle \CO(\eps^{-1/2} \Vert v \Vert_{L^2}), dW \rangle_{H^{-1}}
+ \langle \CO( \Vert v \Vert_{L^2}), d (-\Delta)^{1/2} W \rangle_{H^{-1}},
\end{align*}
where the $\CO$-terms are all bounded in $H^{-1}$.

For $T_4$ we have
\begin{align*}
T_4 = \CO \left( \eps^{-3/2} \eta_1 \Vert v \Vert_{L^2} \right).
\end{align*}
$c$ is by definition exponentially small and we established in section 3.3 that the drift term $b$ is of order $\CO(\eps^{-1} \eta_1)$. Thus, we have 
\begin{align*}
T_1 = \CO( \eps^{-1} \eta_1 \Vert \partial_\xi \tilde{u}^\xi \Vert_{L^2} \Vert v \Vert_{L^2} ) = \CO( \eps^{-3/2} \eta_1 \Vert v \Vert_{L^2} ).
\end{align*}
It remains to estimate the term $T_3$ involving the nonlinearity. Integration by parts 
immediately yields 
\[ 
\left( -\eps^2 \Delta^2 v,v \right) = - \eps^2 \Vert \Delta v \Vert^2_{L^2},
\]
which is a good term for the estimate. 
We continue with the other terms in $T_3$
\begin{align*}
\left( \Delta \left[ f(\tilde{u}^\xi+v) - f(\tilde{u}^\xi) \right],v \right)
&=\left(v,\Delta \left[ v^3 + 3 \tilde{u}^\xi v^2 + 3 (\tilde{u}^\xi)^2 v \right] \right) \\
&\leq C \left[ \Vert v \Vert_{L^6}^3 + \Vert v \Vert_{L^4}^2 \right] \Vert v \Vert_{H^2}
+ C \Vert v \Vert_{L^2} \Vert v \Vert_{H^2}.
\end{align*}
For the higher order powers we obtain 
by Sobolev embedding and interpolation inequalities 
\begin{align*}
C \left[ \Vert v \Vert_{L^6}^3 + \Vert v \Vert_{L^4}^2 \right] \Vert v \Vert_{H^2}
&\leq C \left[ \Vert v \Vert_{H^{1/2}}^2 + \Vert v \Vert_{H^{2/3}}^3 \right] 
\Vert v \Vert_{H^2} \\
&\leq C \left[ \Vert v \Vert_{L^2}^{3/2} \Vert v \Vert_{H^2}^{1/2}
+ \Vert v \Vert_{L^2}^2 \Vert v \Vert_{H^2} \right] \Vert v \Vert_{H^2} \\
& \leq C \left[ \Vert v \Vert_{H^{-1}}^{2\gamma} \Vert v \Vert_{L^2}^{3/2 - 3 \gamma}
\Vert v \Vert_{H^2}^{1/2 + \gamma} + \Vert v \Vert_{L^2}^2 \Vert v \Vert_{H^2} \right]
\Vert v \Vert_{H^2}.
\end{align*}
By choosing $\gamma = 1/2$ we finally derived
\begin{align*}
\left( \Delta \left[ f(\tilde{u}^\xi+v) - f(\tilde{u}^\xi) \right],v \right) 
&\leq C \left[ \Vert v \Vert_{H^{-1}} + \Vert v \Vert_{L^2}^2 \right] \Vert v \Vert_{H^2}^2 
+ C \Vert v \Vert_{L^2} \Vert v \Vert_{H^2} 
.
\end{align*}
The crucial term is the quadratic term in $v$, here we have to use the bound in $H^{-1}$. 
By interpolation and Young inequality
\begin{align*}
 C\Vert v \Vert_{L^2} \Vert v \Vert_{H^2} 
 &\leq C  \Vert v \Vert_{H^{-1}}^{2/3} \Vert v \Vert_{H^2}^{4/3} 
 =  C \eps^{-4/3} \Vert v \Vert_{H^{-1}}^{2/3} \eps^{4/3} \Vert v \Vert_{H^2}^{4/3} \\
 &\leq   C \eps^{-4} \Vert v \Vert_{H^{-1}}^2    +  \frac12\eps^{2} \Vert v \Vert_{H^2}^{2}
\end{align*}
Combining all estimates we have
\[
T_3 \leq -   \left[ \frac12\eps^2 
- C  \Vert v \Vert_{H^{-1}} - C\Vert v \Vert_{L^2}^2 \right]  \Vert \Delta v \Vert^2_{L^2}
+ C \eps^{-4} \Vert v \Vert_{H^{-1}}^2.
\]
Recall that in the preceding section we established  an optimal radius with respect to the $H^{-1}$--norm of order $\CO(\eps^4)$.
We will add a condition on the $L^2$ -- radius such that in the last estimate of the
nonlinearity the leading order of the $H^2$ -- terms is $\CO(\eps^2)$.
\begin{definition}
For $k>0$ and $m>4$ and some given large time $T_\eps$ we define the stopping time
\begin{align}
\label{eq:L2stopping}
\tau_\eps = \inf \left\{ t \in [0,T_\eps ] : \Vert v(t) \Vert_{H^{-1}} > \eps^m 
\text{ or }
\Vert v(t) \Vert_{L^2} > \eps^{k+1} \right\}.
\end{align}
Obviously, we set $\tau^\eps = T_\eps$ if none of the above conditions are fulfilled.
Again, we assume that the solution is well-defined up to $T_\eps$.
\end{definition}

Later, as we establish stability, we will need to refine the parameter $k$ defining
the $L^2$ -- radius. For now, up to the stopping time $\tau_\eps$, we have shown that
for small $\eps$
\[ 
T_3 \leq - c \eps^2 \Vert v \Vert_{H^2}^2 + C\eps^{2m-4}.
\]
Next, we use that by Poincare $\Vert v \Vert_{L^2} \leq \Vert \Delta v \Vert_{L^2}$ and $\eta_1 \leq \eta_0$
to finally get the following estimate for $d \Vert v \Vert_{L^2}^2$.
\begin{lemma}
\label{lemma:L2est}
If $k \geq 0$ and $t \leq \tau_\eps$, with $\tau_\eps$ given by \eqref{eq:L2stopping},
then for some $c>0$ the following relation holds true
\begin{align}
\label{eq:L2est}
d \Vert v \Vert_{L^2}^2 + c\eps^2 \Vert v \Vert_{L^2}^2 \, dt 
= K_\eps \, dt + \langle Z_\eps, dW \rangle_{H^{-1}} + \langle \Psi_\eps, d(-\Delta)^{1/2}W \rangle_{H^{-1}},
\end{align}
where
\begin{align*}
K_\eps = \CO( \eps^{2m-4} + \eps^{k-1/2} \eta_0+ \eps^{-1} \eta_0 + \eta_2)
\end{align*}
and
\begin{align}
\label{def:L2est}
\Vert Z_\eps \Vert_{H^{-1}}^2 = \CO(\eps^{-1} \Vert v \Vert_{L^2}^2), \quad
\Vert \Psi_\eps \Vert_{H^{-1}}^2 = \CO( \Vert v \Vert_{L^2}^2).
\end{align}
\end{lemma}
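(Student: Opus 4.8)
The plan is to assemble the estimates already obtained in the computation above the statement, the whole calculation being carried out for $t\le\tau_\eps$, where the two bounds $\norm{v(t)}_{H^{-1}}\le\eps^m$ and $\norm{v(t)}_{L^2}\le\eps^{k+1}$ from the definition of $\tau_\eps$ in \eqref{eq:L2stopping} are at our disposal. First I would apply It\^o's formula, $d\norm{v}_{L^2}^2=2(v,dv)_{L^2}+(dv,dv)_{L^2}$, and substitute \eqref{eq:dv}. For the quadratic-variation term $(dv,dv)_{L^2}$ I use $\norm{\tilde u_i^\xi}_{L^2}=\CO(\eps^{-1/2})$ (the derivative is of size $\eps^{-1}$ on a strip of width $\eps$), the bound $\norm{\sigma_i}_{H^{-1}}=\CO(1)$ from Lemma~\ref{lemma:var}, the Cauchy--Schwarz plus AM--GM estimate $(\tilde u_j^\xi,dW)\langle\sigma_i,dW\rangle=\CO(\eps^{-1/2}\eta_0^{1/2}\eta_2^{1/2})\,dt=\CO(\eps^{-1}\eta_0+\eta_2)\,dt$, and $\trace(-\Delta\CQ)=\eta_2$; this yields $(dv,dv)_{L^2}=\CO(\eps^{-1}\eta_0+\eta_2)\,dt$, a contribution to $K_\eps$.

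Next I split the mixed term $(v,dv)_{L^2}$ into the four pieces $T_1,\dots,T_4$ as above. The martingale piece $T_2$ already has the shape of the two stochastic integrals in \eqref{eq:L2est}; reading off the integrands and using the $H^{-1}$-bound of $\sigma$ together with $\norm{(-\Delta)^{1/2}v}_{H^{-1}}=\norm{v}_{L^2}$ gives $\norm{Z_\eps}_{H^{-1}}^2=\CO(\eps^{-1}\norm{v}_{L^2}^2)$ and $\norm{\Psi_\eps}_{H^{-1}}^2=\CO(\norm{v}_{L^2}^2)$, i.e. \eqref{def:L2est}. For $T_1$ and $T_4$ I insert $c^\xi=\CO(\exp)$, the drift bound $b=\CO(\eps^{-1}\eta_1)$ from Lemma~\ref{lemma:drift}, and the $L^2$-sizes of the first and second $\xi$-derivatives of $\tilde u^\xi$, which leaves a term $\CO(\eps^{-3/2}\eta_1\norm{v}_{L^2})$; applying $\norm{v}_{L^2}\le\eps^{k+1}$ and $\eta_1\le\eta_0$ turns this into the $\CO(\eps^{k-1/2}\eta_0)$ piece of $K_\eps$.

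The heart of the proof is $T_3=(-\eps^2\Delta^2v,v)_{L^2}+(\Delta[f(\tilde u^\xi+v)-f(\tilde u^\xi)],v)_{L^2}$. Integration by parts produces the good term $-\eps^2\norm{\Delta v}_{L^2}^2$, and the cubic and quadratic contributions of $\Delta[f(\tilde u^\xi+v)-f(\tilde u^\xi)]$ are controlled by Sobolev embedding and interpolation among $H^{-1}$, $L^2$ and $H^2$; the delicate term is $C\norm{v}_{L^2}\norm{v}_{H^2}$, which I bound via $\norm{v}_{L^2}\le\norm{v}_{H^{-1}}^{1/3}\norm{v}_{H^2}^{2/3}$ and a weighted Young inequality as $C\eps^{-4}\norm{v}_{H^{-1}}^2+\tfrac12\eps^2\norm{v}_{H^2}^2$. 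Using $\norm{v}_{H^{-1}}\le\eps^m$ and $\norm{v}_{L^2}\le\eps^{k+1}$ to absorb all surviving $\norm{v}_{H^2}^2$-terms into the negative $-c\eps^2\norm{v}_{H^2}^2$, one reaches $T_3\le-c\eps^2\norm{v}_{H^2}^2+C\eps^{2m-4}$, and Poincar\'e's inequality $\norm{v}_{L^2}\le\norm{\Delta v}_{L^2}\le\norm{v}_{H^2}$ lets me keep $-c\eps^2\norm{v}_{L^2}^2$ on the left. Adding $2(T_1+T_2+T_3+T_4)+(dv,dv)_{L^2}$ and relabelling $c$ then gives \eqref{eq:L2est} with $K_\eps=\CO(\eps^{2m-4}+\eps^{k-1/2}\eta_0+\eps^{-1}\eta_0+\eta_2)$. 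I expect the main obstacle to be precisely this last balancing in $T_3$: without the $H^{-1}$-smallness $\norm{v}_{H^{-1}}\le\eps^m$ established in the previous subsection, the quadratic-in-$v$ term is not dominated by the $\eps^2$-weighted $H^2$-norm, so the $L^2$ estimate genuinely inherits the $H^{-1}$ stopping time.
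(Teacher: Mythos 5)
Your proposal follows essentially the same route as the paper: It\^o's formula for $\Vert v\Vert_{L^2}^2$, the same bound $\CO(\eps^{-1}\eta_0+\eta_2)$ for the quadratic variation, the same split into $T_1,\dots,T_4$ with the drift and second-derivative estimates feeding the $\eps^{k-1/2}\eta_0$ term, and the same treatment of $T_3$ via integration by parts, Sobolev embedding, the interpolation $\Vert v\Vert_{L^2}\leq \Vert v\Vert_{H^{-1}}^{1/3}\Vert v\Vert_{H^2}^{2/3}$ with Young's inequality, and absorption of the $H^2$-terms using the stopping-time bounds. Your closing observation that the quadratic term forces the use of the $H^{-1}$-smallness is exactly the point the paper emphasizes, so the argument is correct and matches the paper's.
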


As in the $H^{-1}$ case we will derive higher moments in the subsequent section and
show stability.

\subsection{Long-time stability in \boldmath$L^2$}

Under the assumptions of Lemma \ref{lemma:L2est} we estimate for any $p>1$ the $p$-th
moment of $\Vert v \Vert_{L^2}^2$. Here we follow again the method used in \cite{DropletAC}
closely and therefore spare the reader some of the details of the derivation. By It\^o
calculus we obtain
\begin{align*}
d\Vert v \Vert_{L^2}^{2p} = p \Vert v \Vert_{L^2}^{2p-2} \, d\Vert v \Vert_{L^2}^2
+p(p-1) \Vert v \Vert_{L^2}^{2p-4} \left[ d\Vert v \Vert_{L^2}^2 \right]^2.
\end{align*}
We briefly comment on estimating the It\^o correction. Using \eqref{eq:L2est} yields
\begin{align}
\label{eq:ItoL2}
\left[ d\Vert v \Vert_{L^2}^2 \right]^2 = \langle  Z_\eps, \CQ Z_\eps \rangle  \, dt
+ \langle \Psi_\eps, - \Delta \CQ \Psi_\eps \rangle \, dt
+ 2 \langle  Z_\eps, dW \rangle \langle  \Psi_\eps, d(-\Delta)^{1/2} W \rangle
\end{align}
and by series expansion we see that
\begin{align*}
\langle Z_\eps, dW \rangle \langle \Psi_\eps, d(-\Delta)^{1/2} W \rangle
&=\sum \alpha_k^2 \langle Z_\eps,e_k \rangle \langle \Psi_\eps,(-\Delta)^{1/2}e_k \rangle
\,dt \\
&\leq \sum \alpha_k^2 \Vert e_k \Vert_{H^{-1}} \Vert e_k \Vert_{L^2} \Vert Z_\eps \Vert
\Vert \Psi_\eps \Vert \, dt \\
&\leq \Vert Z_\eps \Vert \Vert \Psi_\eps \Vert \sqrt{\eta_0 \eta_2} \\
&\leq \Vert Z_\eps \Vert^2 \eta_0 + \Vert \Psi_\eps \Vert^2 \eta_2.
\end{align*}
Therefore, by Cauchy-Schwarz, we derived
\begin{align}
\label{eq:L2Itocorr}
\left[ d\Vert v \Vert_{L^2}^2 \right]^2 \leq  
C \left[ \Vert Z_\eps \Vert_{H^{-1}}^2 \eta_0 + \Vert \Psi_\eps \Vert_{H^{-1}}^2 \eta_2 \right] \, dt.
\end{align}
Plugging \eqref{eq:L2est} and \eqref{eq:L2Itocorr} into relation \eqref{eq:ItoL2}
combined with the definitions \eqref{def:L2est} we derive the following lemma
by integrating.
\begin{lemma}
\label{lemma:induct}
Under the assumptions of Lemma \ref{lemma:L2est}, for any $p>1$ the following estimate
holds true
\begin{align*}
\E \Vert v(\tau_\eps) \Vert^{2p} + cp\eps^2 A_p 
\leq \Vert v(0) \Vert_{L^2}^{2p} + C \left[ K_\eps + \eps^{-1} \eta_0 + \eta_2 \right]
 A_{p-1},
\end{align*}
where $A_p$ is defined as 
\begin{align*}
A_p = \E \int_0^{\tau_\eps} \Vert v(s) \Vert_{L^2}^{2p} \, \mathrm{d}s.
\end{align*}
\end{lemma}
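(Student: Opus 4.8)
The plan is to apply It\^o's formula to the map $y\mapsto y^{p}$ evaluated at $y=\Vert v(t)\Vert_{L^2}^2$, starting from the scalar equation \eqref{eq:L2est} of Lemma \ref{lemma:L2est}. This produces
\[
d\Vert v\Vert_{L^2}^{2p}
= p\Vert v\Vert_{L^2}^{2p-2}\,d\Vert v\Vert_{L^2}^2
+ p(p-1)\Vert v\Vert_{L^2}^{2p-4}\bigl[d\Vert v\Vert_{L^2}^2\bigr]^2 ,
\]
and into the first term I would substitute \eqref{eq:L2est}, isolating the dissipative part $-c\eps^2\Vert v\Vert_{L^2}^2\,dt$, the drift remainder $K_\eps\,dt$, and the two martingale pieces $\langle Z_\eps,dW\rangle$ and $\langle\Psi_\eps,d(-\Delta)^{1/2}W\rangle$. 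Since the exponent $2p-4$ can be negative when $1<p<2$, I would first run the computation with $(\Vert v\Vert_{L^2}^2+\delta)^{p}$ and let $\delta\to0$ at the end; the pointwise bound on $\bigl[d\Vert v\Vert_{L^2}^2\bigr]^2$ obtained below makes that limit harmless.

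For the It\^o correction I would use exactly \eqref{eq:L2Itocorr} together with \eqref{def:L2est}, which give
\[
\bigl[d\Vert v\Vert_{L^2}^2\bigr]^2
\le C\bigl[\Vert Z_\eps\Vert_{H^{-1}}^2\eta_0 + \Vert\Psi_\eps\Vert_{H^{-1}}^2\eta_2\bigr]\,dt
\le C\bigl(\eps^{-1}\eta_0 + \eta_2\bigr)\Vert v\Vert_{L^2}^2\,dt ,
\]
so that $p(p-1)\Vert v\Vert_{L^2}^{2p-4}\bigl[d\Vert v\Vert_{L^2}^2\bigr]^2\le Cp(p-1)(\eps^{-1}\eta_0+\eta_2)\Vert v\Vert_{L^2}^{2p-2}\,dt$. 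Collecting the drift terms and absorbing the $p$-dependent constants into $C$ yields
\[
d\Vert v\Vert_{L^2}^{2p} + cp\eps^2\Vert v\Vert_{L^2}^{2p}\,dt
\le C\bigl[K_\eps + \eps^{-1}\eta_0 + \eta_2\bigr]\Vert v\Vert_{L^2}^{2p-2}\,dt
+ p\Vert v\Vert_{L^2}^{2p-2}\bigl(\langle Z_\eps,dW\rangle + \langle\Psi_\eps,d(-\Delta)^{1/2}W\rangle\bigr).
\]

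Next I would integrate from $0$ to $\tau_\eps$ and take expectations, following the optional-stopping argument of \cite{DropletAC}. On $[0,\tau_\eps]$ one has $\Vert v\Vert_{H^{-1}}\le\eps^{m}$ and $\Vert v\Vert_{L^2}\le\eps^{k+1}$ by the definition \eqref{eq:L2stopping} of $\tau_\eps$, so that $\Vert Z_\eps\Vert_{H^{-1}}$, $\Vert\Psi_\eps\Vert_{H^{-1}}$, and hence the full integrands $\Vert v\Vert_{L^2}^{2p-2}Z_\eps$ and $\Vert v\Vert_{L^2}^{2p-2}\Psi_\eps$, are bounded; together with the standing assumption that the solution exists up to $T_\eps$, the stopped stochastic integrals are genuine mean-zero martingales. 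Using moreover that $K_\eps$ is a deterministic quantity and that $\E\int_0^{\tau_\eps}\Vert v\Vert_{L^2}^{2p-2}\,ds=A_{p-1}$, this gives
\[
\E\Vert v(\tau_\eps)\Vert_{L^2}^{2p} + cp\eps^2 A_p
\le \Vert v(0)\Vert_{L^2}^{2p} + C\bigl[K_\eps + \eps^{-1}\eta_0 + \eta_2\bigr]A_{p-1} ,
\]
which is the assertion.

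The main obstacle is the rigorous vanishing of the stochastic integral at the random time $\tau_\eps$: one must check that each integrand, including the ones driven by the two independent noise terms $dW$ and $d(-\Delta)^{1/2}W$ (with finite traces $\eta_0$ and $\eta_2$), lies in $L^2(\Omega\times[0,T_\eps])$ after stopping, which is precisely where the definition of $\tau_\eps$ as an exit time from small $H^{-1}$- and $L^2$-balls is used. A secondary, purely bookkeeping point is keeping track of the $\eps$-powers hidden inside $K_\eps$; for the present lemma it is cleanest to carry $K_\eps$ as a symbol and optimize the parameter $k$ only later.
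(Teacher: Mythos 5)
Your proposal is correct and follows essentially the same route as the paper: It\^o's formula applied to $y\mapsto y^p$ at $y=\Vert v\Vert_{L^2}^2$, the bound \eqref{eq:L2Itocorr} with \eqref{def:L2est} for the quadratic variation term, and integration up to $\tau_\eps$ with optional stopping to remove the martingale contributions. The extra care you take with the regularization for $1<p<2$ and with verifying square-integrability of the stopped stochastic integrals is sound but not part of the paper's (terser) argument.
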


For the sake of simplicity we define 
\begin{equation}
\label{e:Defaeps}
 a_\eps = C\eps^{-2} \left[K_\eps  + \eps^{-1} \eta_0 + \eta_2 \right]
\end{equation}
and assume that the noise strength is small enough such that $a_\eps < 1$.
Note that by the definition of $K_\eps$ we thus also need $C\eps^{2m-6}<1$, 
which is true by  assumption.

Applying Lemma \ref{lemma:induct} inductively we obtain
\begin{align*}
A_p &\leq C\eps^{-2} \Vert v(0) \Vert_{L^2}^{2p} + C a_\eps A_{p-1} \\
&\leq C\eps^{-2} \Vert v(0) \Vert_{L^2}^{2p} + Ca_\eps \eps^{-2} 
\Vert v(0) \Vert_{L^2}^{2p-2} +a_\eps^2 A_{p-2} \\
&\leq \ldots \leq C\eps^{-2} \sum_{i=2}^p a_\eps^{p-i} \Vert v(0) \Vert^{2i}_{L^2}
+ C a_\eps^{p-1} A_1.
\end{align*}
Note that by \eqref{eq:L2est} we have for $t \leq \tau_\eps$
\begin{align*}
\E \int_0^t \Vert v(s) \Vert^2_{L^2} \, ds \leq C \eps^{-2} K_\eps T_\eps 
+ \eps^{-2} \Vert v(0) \Vert^2_{L^2}
\leq a_\eps T_\eps + \eps^{-2} \Vert v(0) \Vert^2_{L^2}.
\end{align*}
Hence, we derive
\begin{align}
\label{eq:Ap}
A_p \leq C\eps^{-2} \sum_{i=1}^p a_\eps^{p-i} \Vert v(0) \Vert_{L^2}^{2i}
+ C a_\eps^p T_\eps
\leq C \left[ \eps^{-2} + T_\eps \right] a_\eps^p
+ C \eps^{-2} \Vert v(0) \Vert^{2p}_{L^2}
\end{align}
for $C$ a constant depending on $p$.
\begin{lemma}
\label{lemma:L2exp}
Let $k\geq 2$ and $\tau_\eps$ as defined in \eqref{eq:L2stopping}.

If
\[
\Vert v(0) \Vert_{L^2}^2 \leq a_\eps <1
\]
then for any $p>1$ it holds true that
\begin{align*}
\E \Vert v(\tau_\eps) \Vert_{L^2}^{2p} \leq C\eps^2 \left[ \eps^{-2} + T_\eps \right] 
a_\eps^p.
\end{align*}
\end{lemma}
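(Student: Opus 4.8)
The plan is to combine the single-step moment inequality of Lemma~\ref{lemma:induct} with the already-derived closed form \eqref{eq:Ap} for $A_{p-1}$, and then use the smallness hypothesis $\Vert v(0)\Vert_{L^2}^2\le a_\eps$ to collapse all initial-data contributions into a single multiple of $a_\eps^p$.

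First I would invoke Lemma~\ref{lemma:induct}, discard the nonnegative term $cp\eps^2A_p$ on its left-hand side, and rewrite the factor $K_\eps+\eps^{-1}\eta_0+\eta_2$ via the definition \eqref{e:Defaeps} of $a_\eps$ as $C^{-1}\eps^2a_\eps$; this gives
\[
\E\Vert v(\tau_\eps)\Vert_{L^2}^{2p}\le\Vert v(0)\Vert_{L^2}^{2p}+C\eps^2a_\eps\,A_{p-1}.
\]
Next I would substitute the bound \eqref{eq:Ap} with $p$ replaced by $p-1$, obtaining
\[
\E\Vert v(\tau_\eps)\Vert_{L^2}^{2p}\le\Vert v(0)\Vert_{L^2}^{2p}+C\eps^2\bigl[\eps^{-2}+T_\eps\bigr]a_\eps^{p}+Ca_\eps\Vert v(0)\Vert_{L^2}^{2p-2}.
\]
Finally, using $\Vert v(0)\Vert_{L^2}^2\le a_\eps$ one gets $\Vert v(0)\Vert_{L^2}^{2p}\le a_\eps^p$ and $a_\eps\Vert v(0)\Vert_{L^2}^{2p-2}\le a_\eps^p$; since $\eps^2[\eps^{-2}+T_\eps]=1+\eps^2T_\eps\ge1$, all three terms on the right are bounded by $C\eps^2[\eps^{-2}+T_\eps]a_\eps^p$, which is precisely the asserted estimate (with $C$ depending on $p$, exactly as in \eqref{eq:Ap}).

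I do not expect a genuine obstacle: the real work has already been spent on Lemma~\ref{lemma:induct} and on deriving \eqref{eq:Ap}, so this step is essentially bookkeeping. The one thing to keep in sight is that the derivation of \eqref{eq:Ap}, and the inequality $a_\eps^{p-i}\le 1$ used inside it, require $a_\eps<1$, which is the standing smallness assumption on the noise (and which in particular forces $C\eps^{2m-6}<1$, automatic since $m>4$). The hypothesis $k\ge2$ is not used in the lemma itself; it is inherited through the definition of $\tau_\eps$ and will be needed only afterwards, when an estimate of the present type is fed into a Chebyshev bound for $\prob(\tau_\eps<T_\eps)$.
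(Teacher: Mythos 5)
Your proposal is correct and follows essentially the same route as the paper: apply Lemma \ref{lemma:induct}, drop the nonnegative $cp\eps^2A_p$ term, replace $C[K_\eps+\eps^{-1}\eta_0+\eta_2]$ by $C\eps^2a_\eps$ via \eqref{e:Defaeps}, substitute \eqref{eq:Ap} for $A_{p-1}$, and absorb the initial-data terms using $\Vert v(0)\Vert_{L^2}^2\le a_\eps$. The bookkeeping is carried out correctly and matches the paper's proof step for step.
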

Note that in the previous Lemma, if $\Vert v(0) \Vert_{L^2}^2>C\eps^{k+1}$ then $\tau_\eps=0$.

\begin{proof}
By Lemma \ref{lemma:induct} and \eqref{eq:Ap} we have
\begin{align*}
\E \Vert v(\tau_\eps) \Vert_{L^2}^{2p} &\leq \Vert v(0) \Vert_{L^2}^{2p}
+ C \left[ K_\eps + \eps^{-1} \eta_0 + \eta_2 \right] A_{p-1} \\
&= \Vert v(0) \Vert_{L^2}^{2p} + C\eps^2 a_\eps A_{p-1} \\
&\leq \Vert v(0) \Vert_{L^2}^{2p} + C\eps^2 a_\eps \left[ \eps^{-2} + T_\eps \right] 
a_\eps^{p-1} + C\eps^2 a_\eps \eps^{-2} \Vert v(0) \Vert_{L^2}^{2p-2} \\
&\leq Ca_\eps^p + C\eps^2 T_\eps a_\eps^p.
\end{align*}
\end{proof}
With help of Lemma \ref{lemma:L2exp} we can finally prove stability in $L^2$.

\begin{theorem}
Consider for $m>4$ and $k\in(0,m-4)$ the exit time
\[
\tau_\eps = \inf \{ t \in [0,T_\eps] : \Vert v(t) \Vert_{H^{-1}} > c_0 \eps^m \text{ or }
 \Vert v(t) \Vert_{L^2} > C \eps^{k+1} \},
\]
where $T_\eps = \eps^{-N}$ for fixed large $N>0$.
Let also for some $\nu\in(0,1)$
\[
\Vert v(0) \Vert_{H^{-1}} \leq\nu \eps^m \quad \text{and} \quad
\Vert v(0) \Vert_{L^2} \leq \nu \eps^{k+1}
\]
and also assume for the noise strength that for some small $\tilde\kappa>0$
\[
\eta_0 \leq C \eps^{2m+1+\tilde\kappa} \quad \text{and} \quad \eta_2 \leq C \eps^{2k+4+\tilde\kappa }.
\]
Then the probability $\prob ( \tau_\eps < T_\eps )$ is smaller than any power of $\eps$,
as $\eps$ tends to $0$.
\end{theorem}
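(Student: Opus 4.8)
The plan is to decompose the failure event $\{\tau_\eps < T_\eps\}$ according to which of the two barriers in the definition of $\tau_\eps$ is reached first, and to control the two pieces by the two stability mechanisms already assembled: the long-time $H^{-1}$ bound of Theorem \ref{thm:H-1stabnew} and the $L^2$ moment estimates culminating in Lemma \ref{lemma:L2exp}. By continuity of $t\mapsto v(t)$, on $\{\tau_\eps < T_\eps\}$ we have at time $\tau_\eps$ either $\|v(\tau_\eps)\|_{H^{-1}} = c_0\eps^m$ or $\|v(\tau_\eps)\|_{L^2} = C\eps^{k+1}$, whence
\[
\prob(\tau_\eps < T_\eps) \;\leq\; \prob\big(\|v(\tau_\eps)\|_{H^{-1}} \geq c_0\eps^m\big) \;+\; \prob\big(\|v(\tau_\eps)\|_{L^2} \geq C\eps^{k+1}\big).
\]
As in Theorem \ref{thm:H-1stab} one additionally intersects with a stopping time ensuring $\xi(t)\in\Omega_{\rho+\delta}$, so that the Fermi coordinates and the spectral estimates for $\CL^\xi$ remain valid throughout; this contributes nothing to the probability bound.

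For the first summand, the event $\{\|v(\tau_\eps)\|_{H^{-1}} \geq c_0\eps^m\}$ forces $\|v\|_{H^{-1}}$ to reach level $c_0\eps^m$ at some time $\leq T_\eps$; imposing the extra $L^2$-barrier only makes the exit time smaller, so this event is contained in the one analysed in Theorem \ref{thm:H-1stabnew}. Since the present hypothesis $\eta_0 \leq C\eps^{2m+1+\tilde\kappa}$ is exactly the one assumed there, this probability is smaller than any power of $\eps$.

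For the second summand I would run the $L^2$-estimates of the preceding subsections up to the joint stopping time $\tau_\eps$, noting that everything they require is available on $[0,\tau_\eps]$: the bound $\|v\|_{H^{-1}} \leq c_0\eps^m \leq c_0\eps^4$ is precisely what lets Lemma \ref{lemma:L2est} absorb the bad $\|v\|_{H^2}^2$-contributions of the cubic nonlinearity into the good term $-c\eps^2\|\Delta v\|_{L^2}^2$. Then Lemma \ref{lemma:induct}, estimate \eqref{eq:Ap} and Lemma \ref{lemma:L2exp} provide, for every $p>1$,
\[
\E\|v(\tau_\eps)\|_{L^2}^{2p} \;\leq\; C\eps^2\big[\eps^{-2}+T_\eps\big]\,a_\eps^{\,p},
\]
once the effective smallness $\|v(0)\|_{L^2}^2 \lesssim a_\eps$ of the initial datum is in force (the $L^2$-analogue of the condition $\|v(0)\|^2\leq q$ used in the $H^{-1}$ proof, compatible with $\|v(0)\|_{L^2}\leq\nu\eps^{k+1}$ for $\eps$ small). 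Chebyshev's inequality then gives
\[
\prob\big(\|v(\tau_\eps)\|_{L^2} \geq C\eps^{k+1}\big) \;\leq\; C\eps^2\big[\eps^{-2}+T_\eps\big]\Big(\frac{a_\eps}{C^2\eps^{2(k+1)}}\Big)^{p},
\]
and the remaining point is purely arithmetic: inserting $\eta_0\leq C\eps^{2m+1+\tilde\kappa}$ and $\eta_2\leq C\eps^{2k+4+\tilde\kappa}$ into the definition \eqref{e:Defaeps} of $a_\eps$ together with the expression for $K_\eps$ from Lemma \ref{lemma:L2est}, and using $k<m-4$ to discard the subdominant $\eps^{2m-6}$-type contributions, one finds $a_\eps = \CO(\eps^{2(k+1)+\tilde\kappa})$, so that $a_\eps/\eps^{2(k+1)} = \CO(\eps^{\tilde\kappa})$. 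With $T_\eps=\eps^{-N}$ the right-hand side is $\CO(\eps^{2-N+p\tilde\kappa})$; letting $p\to\infty$ makes it smaller than any prescribed power of $\eps$, which finishes the argument.

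I expect the main difficulty to be not any individual inequality but the consistency of the bookkeeping. The $L^2$ differential inequality is available only while $v$ is controlled in $H^{-1}$, whereas the $H^{-1}$ inequality itself requires $\|v\|_{H^{-1}}\leq c_0\eps^4$; hence the two radii $\eps^m$ and $\eps^{k+1}$, the two noise parameters $\eta_0$ and $\eta_2$, and the admissible range $k\in(0,m-4)$ must be chosen so that both induction chains close on the same interval $[0,\tau_\eps]$ — which is exactly what the hypotheses on $\eta_0$, $\eta_2$, $m$ and $k$ encode. Some additional care is needed in matching the constants between the $L^2$-barrier $C\eps^{k+1}$ and the bound $\nu\eps^{k+1}$ on $v(0)$ (as in the analogous point of the $H^{-1}$ proof), and in verifying that Lemma \ref{lemma:L2exp}, stated for $k\geq2$, is applicable over the relevant range of $k$.
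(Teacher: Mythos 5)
Your proposal follows essentially the same route as the paper: split $\{\tau_\eps<T_\eps\}$ according to which barrier is hit, dispose of the $H^{-1}$ event via Theorem \ref{thm:H-1stabnew}, and bound the $L^2$ event by Chebyshev applied to the moment estimate of Lemma \ref{lemma:L2exp}, with the arithmetic $a_\eps/\eps^{2(k+1)}=\CO(\eps^{\tilde\kappa})$ closing the argument. The bookkeeping caveats you flag (the $k\geq 2$ hypothesis of Lemma \ref{lemma:L2exp} versus $k\in(0,m-4)$, and matching $\|v(0)\|_{L^2}^2$ against $a_\eps$) are real but are likewise left implicit in the paper's own proof.
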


\begin{proof}
We have
\[
\prob \left( \tau_\eps < T_\eps \right) 
\leq \prob \left( \Vert v(\tau_\eps) \Vert_{L^2} > \eps^{k+1} \right) 
+ \prob \left( \Vert v(\tau_\eps) \Vert_{H^{-1}} >  \eps^m \right)
\]
Now, using the $H^{-1}$ result of Theorem \ref{thm:H-1stabnew} we have for any $l>1$
\[ 
\prob \left( \Vert v(\tau_\eps) \Vert_{H^{-1}} >  \eps^m \right) \leq C_l\eps^l\;.
\]
Moreover, by Lemma \ref{lemma:L2exp}
with Chebychev's inequality
\begin{align*}
\prob \left( \Vert v(\tau_\eps) \Vert_{L^2} > C\eps^{k+1} \right) 
&\leq C \eps^{-2p(k+1)} \E \Vert v(\tau_\eps) \Vert^{2p}_{L^2}   \\
&\leq C \eps^{-2p(k+1)} \eps^2 \left[\eps^{-2} + T_\eps \right] a_\eps^p 
= C \left( \eps^{-2(k+1)}  a_\eps  \right)^p \left[1 + \eps^2 T_\eps \right] 
\\& = C \left( \eps^{-2(k+2)}  K_\eps  \right)^p \left[1 + \eps^2 T_\eps \right] .
\end{align*}
Now by our assumptions the bracket is bounded by $\eps^{\tilde\kappa}$ and thus 
choosing $p$ large enough yields the result.
\end{proof}

\section{Estimates}
\label{sec:est}

In this final section we give all the estimates that were needed throughout this work.
Compared to the deterministic counterpart we need to bound higher order derivatives.
We start with estimating with respect to the $H^{-1}_0$ -- norm to conclude the first
part of section 4.

\begin{lemma}
\label{lem:estH-1}
For $i = 1, 2$ let $\psi^\xi_i$ be the orthonormal basis from Theorem \ref{LinCH}
and $\tilde{u}^\xi$ the bubble as constructed in Theorem \ref{thm:slowmf}. Further 
subindices will denote partial derivates with respect to $\xi$. The following estimates
hold true
\begin{align*}
&\Vert \tilde{u}^\xi_j \Vert_{H^{-1}} =\CO(1), \quad \quad  \qquad
\Vert \psi^\xi_{i,j} \Vert_{H^{-1}} = \CO(\eps^{-1}) \\
&\Vert \tilde{u}^\xi_{ij} \Vert_{H^{-1}} = \CO( \eps^{-1/2}), \qquad
\Vert \psi^\xi_{i,jk} \Vert_{H^{-1}} = \CO( \eps^{-3/2}).
\end{align*}
\end{lemma}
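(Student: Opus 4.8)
The plan is to treat the $\tilde u^\xi$-estimates and the $\psi^\xi$-estimates separately, reducing the former to the explicit radial profile and the latter to the representation \eqref{def:psi} together with the spectral construction.

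For $\tilde u^\xi$ I would first reduce to $u^\xi$: since $\tilde u^\xi=u^\xi+v^\xi$ and, by Theorem~\ref{thm:slowmf}(i),(iii), $v^\xi$ together with all its $\xi$-derivatives is $\CO(\exp)$ in $C^4(\bar\Omega)\hookrightarrow H^{-1}$, it suffices to bound $\xi$-derivatives of $u^\xi$. By \eqref{defuxi} and the bound \eqref{eq:mass} on $a^\xi$ and its derivatives, differentiating $u^\xi$ in $\xi$ agrees, up to $\CO(\exp)$, with differentiating in $x$, so $\tilde u^\xi_j=-\partial_{x_j}u^\xi+\CO(\exp)$, $\tilde u^\xi_{jk}=\partial_{x_j}\partial_{x_k}u^\xi+\CO(\exp)$, and similarly one order higher. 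Two elementary facts then finish this part. First, by the exponential localisation of the profile and its radial derivatives (Proposition~\ref{thm:stat}(v)), $\partial_x^\alpha u^\xi$ is of size $\CO(\eps^{-|\alpha|})$ on an $\CO(\eps)$-tube around the circle $\{|x-\xi|=\rho\}$ (of total length $\CO(1)$) and $\CO(\exp)$ off it, so $\|\partial_x^\alpha u^\xi\|_{L^2(\Omega)}=\CO(\eps^{1/2-|\alpha|})$ for $|\alpha|\ge1$, while $u^\xi-1$ is bounded and localised near the disk, hence $\|u^\xi-1\|_{L^2(\Omega)}=\CO(1)$. Second, a single $x$-derivative is harmless in $H^{-1}$: $\|\partial_{x_j}w\|_{H^{-1}_0(\Omega)}\le C\|w\|_{L^2(\Omega)}+\CO(\exp)$ whenever $w$ is, up to $\CO(\exp)$, constant near $\partial\Omega$ (integrate by parts, use that $\partial_{x_j}(-\Delta)^{-1}$ is $L^2$-bounded, the boundary term being $\CO(\exp)$, and discard the constant). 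Applying the second fact with $w=u^\xi-1$, then $w=\partial_{x_k}u^\xi$, then $w=\partial_{x_k}\partial_{x_\ell}u^\xi$ yields $\|\tilde u^\xi_j\|_{H^{-1}}=\CO(1)$, $\|\tilde u^\xi_{jk}\|_{H^{-1}}=\CO(\eps^{-1/2})$, and also the auxiliary bound $\|\tilde u^\xi_{jk\ell}\|_{H^{-1}}=\CO(\eps^{-3/2})$, which I will need below. (Note the naive inequality $\|\cdot\|_{H^{-1}}\le C\|\cdot\|_{L^2}$ is not enough here, so exploiting the free derivative is essential.)

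The bound $\|\psi^\xi_{i,j}\|_{H^{-1}}=\CO(\eps^{-1})$ is part of Theorem~\ref{LinCH}(iii). For the last estimate I would differentiate the representation
\[
\psi_i^\xi=\sum_{\ell=1}^2 a_{i\ell}^\xi\,\frac{\tilde u_\ell^\xi}{\|\tilde u_\ell^\xi\|}+\CO(\exp)
\]
twice in $\xi$; the remainder and its $\xi$-derivatives stay $\CO(\exp)$ by Theorem~\ref{thm:slowmf}(iii). Each resulting term is a product of a $\xi$-derivative of order $\le2$ of $a^\xi_{i\ell}$, one of the scalar $\|\tilde u_\ell^\xi\|^{-1}$, and one of $\tilde u_\ell^\xi$. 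The factors built from $\tilde u^\xi_\ell$ are bounded by the first paragraph, the largest being the one carrying $\tilde u^\xi_{jk\ell}$, of size $\CO(\eps^{-3/2})$. The normalisation scalars satisfy $\|\tilde u^\xi_\ell\|=C_0\rho+\CO(\rho^2)+\CO(\eps\rho^{-1})+\CO(\exp)$ by \eqref{ineq:Inv}, are bounded below, and have $\xi$-derivatives controlled via $\partial_{\xi_k}\|\tilde u^\xi_\ell\|^2=2\langle\tilde u^\xi_{k\ell},\tilde u^\xi_\ell\rangle$ and the first paragraph. Finally, by the Helffer--Sjöstrand argument in the comment to Theorem~\ref{LinCH}, $a^\xi$ coincides up to $\CO(\exp)$ with the orthonormalising matrix of $\{\tilde u^\xi_\ell/\|\tilde u^\xi_\ell\|\}$, whose entries and $\xi$-derivatives (to order $2$) are governed by the Gram matrix $(\langle\tilde u^\xi_i,\tilde u^\xi_j\rangle)$ through \eqref{ineq:Inv}, \eqref{eq:MatId} and hence, again, by the first paragraph. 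Collecting, every term is $\CO(\eps^{-3/2})$.

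The step I expect to require the most care is the control of the $\xi$-derivatives of $a^\xi$: since $\lambda^\xi_1$ and $\lambda^\xi_2$ are only $\CO(\exp)$-separated, the eigenvector frame inside $U^\xi$ is a priori sensitive to $\xi$, and one must verify that the spectral construction of \cite{SpecEst} is stable under two $\xi$-differentiations with no more than an $\eps^{-1/2}$ loss, i.e.\ that this frame does not ``rotate'' too fast. An alternative route that avoids $a^\xi$ is to differentiate the eigenvalue relation $\CL^\xi\psi^\xi_i=-\lambda^\xi_i\psi^\xi_i$ twice, project off $U^\xi$, and invert $\CL^\xi+\lambda^\xi_i$ there using the gap $\lambda^\xi_3\ge C'\eps$; but this forces one to exploit the factorisation $\CL^\xi=\Delta\big(-\eps^2\Delta+F''(\tilde u^\xi)\big)$ so that the outer Laplacian in the source terms $\partial_{\xi_j}\CL^\xi=\Delta\big(F'''(\tilde u^\xi)\tilde u^\xi_j\,\cdot\,\big)$ cancels against it — estimating the source directly in $H^{-1}$ and multiplying by $\eps^{-1}$ loses powers of $\eps$. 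Reducing the inversion in this way to the second-order operator $-\eps^2\Delta+F''(\tilde u^\xi)$, whose only dangerous directions are the $\CO(\exp)$-small eigenvalues already removed by the orthogonality to $U^\xi$, is the technical heart of that alternative.
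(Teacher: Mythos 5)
Your proposal is correct in outline and rests on the same two mechanisms as the paper's proof: the $L^2$ localisation bounds $\Vert\partial_x^\alpha u^\xi\Vert_{L^2}=\CO(\eps^{1/2-|\alpha|})$ and the observation that one spare $x$-derivative is free in $H^{-1}$ (the paper implements this via the characterisation $\Vert g\Vert^2=\inf_{g=\nabla\cdot f}\int|f|^2$ with $f=\partial_x u^\xi$, which is your integration-by-parts argument in different clothing), plus the representation \eqref{def:psi} for the $\psi$-estimates and the citation of Theorem \ref{LinCH}(iii) for $\Vert\psi^\xi_{i,j}\Vert_{H^{-1}}$. Two differences are worth noting. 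For $\Vert\tilde u^\xi_j\Vert_{H^{-1}}=\CO(1)$ the paper simply quotes the Gram identity \eqref{e:Muxi} from \cite{CHBoundary}, whereas you rederive it by applying the free-derivative trick to $w=u^\xi-\mathrm{const}$; your version is more self-contained and equally valid. The substantive divergence is in the last estimate: you differentiate \eqref{def:psi} twice in $\xi$ and estimate directly in $H^{-1}$, which forces you to control the auxiliary bound $\Vert\tilde u^\xi_{jk\ell}\Vert_{H^{-1}}=\CO(\eps^{-3/2})$ and, more seriously, second $\xi$-derivatives of the frame matrix $a^\xi$ -- precisely the step you flag as delicate and leave to one of two sketched strategies, neither of which is covered by the cited spectral results. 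The paper instead spends the free-derivative trick once more to reduce to $\Vert\psi^\xi_{i,jk}\Vert_{H^{-1}}\leq\Vert\psi^\xi_{i,j}\Vert_{L^2}$ (the extra $\xi$-derivative, when it falls on bubble factors, is an $x$-derivative up to $\CO(\exp)$), and then bounds $\Vert\psi^\xi_{i,j}\Vert_{L^2}=\CO(\eps^{-3/2})$ by differentiating \eqref{def:psi} only \emph{once}, so that only first derivatives of $a^\xi$ enter, whose boundedness is taken from the smoothness statement in Theorem \ref{LinCH}(iii). In short, the paper's ordering of the two tricks buys a strictly weaker requirement on the spectral frame; your route is workable and more explicit about what must be controlled, but as written it carries an unresolved extra burden (second-derivative control of $a^\xi$, or the resolvent argument through the gap $\lambda_3^\xi\geq C'\eps$ that you sketch) which the paper's argument avoids altogether.
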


\begin{proof}
In Section 3 of \cite{CHBoundary} it was proved that
\begin{equation}
\label{e:Muxi}
\left( \frac{\partial u^\xi}{\partial x_i}, \frac{\partial u^\xi}{\partial x_j} \right)
= C \rho^2 \delta_{ij} + \CO(\rho^3) + \CO(\eps\rho^{-1}) + \CO(\exp).
\end{equation}
Using the relation
\begin{align}
\label{eq:xder}
\tilde{u}^\xi_i = \frac{\partial u^\xi}{\partial x_i} + \CO(\exp),
\end{align}
$\Vert \tilde{u}^\xi_j \Vert =\CO(1)$ is established. Furthermore, the bound 
$\Vert \psi^\xi_{i,j} \Vert = \CO(\eps^{-1})$ is part of Theorem \ref{LinCH}.

By definition, for $g \in H^{-1}$ we can find $f_1, f_2 \in L^2$ such that
\[
g = \nabla \cdot f =
\frac{\partial f_1}{\partial x_1} + \frac{\partial f_2}{\partial x_2}
\]
and the norm on $H^{-1}$ is given by
\[
\Vert g \Vert^2 = \inf_{g = \nabla \cdot f} \int_\Omega \vert f \vert^2 \, dx.
\]
Therefore, with \eqref{eq:xder} and choosing $f_j = \frac{\partial u^\xi}{\partial x_i}$, 
we have
\[
 \Vert \tilde{u}_{ij}^\xi \Vert^2 \leq \Vert \partial_{x_i} u^\xi \Vert_{L^2}^2 + \CO(\exp) = \CO( \eps^{-1/2}),
\]
where the $L^2$ estimate will be established in Lemma 5.2. 
The same argument yields 
$\Vert \psi^\xi_{i,jk} \Vert \leq \Vert \psi^\xi_{i,j} \Vert_{L^2}.$

In light of Theorem \ref{LinCH} (iii) we compute
\begin{align*}
\left\Vert \partial_j \frac{\tilde{u}^\xi_k}{\Vert \tilde{u}^\xi_k \Vert} \right\Vert_{L^2} &=
\left\Vert - \frac{\tilde{u}^\xi_k \langle \tilde{u}^\xi_k, \tilde{u}^\xi_{kj} \rangle}{\Vert \tilde{u}^\xi_k \Vert^3} + \frac{\tilde{u}^\xi_{kj}}{\Vert \tilde{u}^\xi_k \Vert} \right\Vert_{L^2}\\
&\leq \frac{\Vert u^\xi_k \Vert_{L^2} \Vert u^\xi_{kj} \Vert}{\Vert u^\xi_k \Vert^2}
+ \frac{\Vert u^\xi_{kj} \Vert_{L^2}}{\Vert u^\xi_k \Vert} \\
&\leq \frac{\Vert u^\xi_k \Vert_{L^2} + \Vert u^\xi_{kj} \Vert_{L^2}}{\Vert u^\xi_k \Vert}
\end{align*}
With the already proven bound $\Vert u^\xi_k \Vert = \CO(1)$,
$\Vert u^\xi_k \Vert_{L^2} = \CO(\eps^{-1/2})$ and $\Vert u^\xi_{kj} \Vert_{L^2} = \CO(\eps^{-3/2})$
(cf. Lemma 5.2) we obtain
\[
\left\Vert \partial_j \frac{\tilde{u}^\xi_k}{\Vert \tilde{u}^\xi_k \Vert} \right\Vert_{L^2}
= \CO( \eps^{-3/2}).
\]
Finally, by the definition in Theorem \ref{LinCH} we derive
\begin{align}
\label{psiL2}
\Vert \psi_{i,j}^\xi \Vert_{L^2} &\leq \sum_k \vert \partial_j a^\xi_{ki} \vert \frac{\Vert \tilde{u}^\xi_k \Vert_{L^2}}{\Vert \tilde{u}^\xi_k \Vert} + \CO(\eps^{-3/2})
\nonumber \\
&\leq C \eps^{-1/2} \sum_k  \vert \partial_j a^\xi_{ki} \vert + \CO(\eps^{-3/2}) = \CO(\eps^{-3/2}),
\end{align}
where we used that the matrix $(a_{ki}^\xi)$ does depend smoothly on $\xi$ and is non-singular.
\end{proof}

We conclude with the estimates with respect to $L^2$ which were needed for section 4.3.

\begin{lemma}
Under the same assumptions as in Lemma \ref{lem:estH-1} the following estimates hold true
\begin{align*}
&\Vert \tilde{u}^\xi_j \Vert_{L^2} =\CO(\eps^{-1/2}), \quad \quad \,
\Vert \psi^\xi_{i} \Vert_{L^2} = \CO(\eps^{-1/2}) \\
&\Vert \tilde{u}^\xi_{ij} \Vert_{L^2} = \CO( \eps^{-3/2}), \quad \quad
\Vert \psi^\xi_{i,j} \Vert_{L^2} = \CO( \eps^{-3/2}).
\end{align*}
\end{lemma}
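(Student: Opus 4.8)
The plan is to reduce all four estimates to the scaling structure of the radial profile $U^\star$, exploiting that each $\xi$-derivative of the droplet state costs a factor $\eps^{-1}$ but is supported, up to $\CO(\exp)$, on a layer of area $\CO(\eps)$ around a circle of circumference $\CO(1)$. First I would replace $\tilde u^\xi$ by $u^\xi$: by Theorem \ref{thm:slowmf}(i),(iii) the perturbation $v^\xi$ and all of its $x$- and $\xi$-derivatives are $\CO(\exp)$, and on the bounded domain $\Omega$ the $L^2$-norm is dominated by the supremum norm, so it suffices to bound $\|\partial_{\xi_j}u^\xi\|_{L^2}$ and $\|\partial_{\xi_i}\partial_{\xi_j}u^\xi\|_{L^2}$. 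From $u^\xi(x)=U^\star(|x-\xi|/\eps,(\rho-a^\xi)/\eps)$ and $a^\xi,\partial_\xi a^\xi=\CO(\exp)$ (by \eqref{eq:mass}) one has $\tilde u^\xi_j=\partial_{x_j}u^\xi+\CO(\exp)$, which is \eqref{eq:xder}, and, iterating, $\tilde u^\xi_{ij}=\partial_{x_i}\partial_{x_j}u^\xi+\CO(\exp)$, the second argument of $U^\star$ depending on $\xi$ only through $a^\xi$ and hence contributing only $\CO(\exp)$ since $U^\star_\rho$, $U^\star_{r\rho}$ stay bounded. Here $\partial_{x_j}u^\xi=\eps^{-1}U^\star_r\,\widehat{n}_j$ and $\partial_{x_i}\partial_{x_j}u^\xi=\eps^{-2}U^\star_{rr}\,\widehat{n}_i\widehat{n}_j+\eps^{-1}U^\star_r\,\partial_{x_i}\widehat{n}_j$, with $\widehat{n}=(x-\xi)/|x-\xi|$ and $|\partial_{x_i}\widehat{n}_j|\le 2/|x-\xi|=\CO(1)$ on the support.

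The one analytic input is that $U^\star_r$ and $U^\star_{rr}$ decay like $e^{-c|r-\rho|}$ on both sides of the transition circle; for $r>\rho$ this is Proposition \ref{thm:stat}(v), and for $r<\rho$, and for the second derivative, it follows from the profile equation written as $U^\star_{rr}=\sigma+F'(U^\star)-r^{-1}U^\star_r$, whose right-hand side is controlled by $U^\star-\alpha(\rho)$ (resp.\ by the deviation of $U^\star$ from the interior root of $F'(\cdot)+\sigma=0$ near $-1$) and by $U^\star_r$, all exponentially small by the profile analysis of \cite{BubbleCH}. Hence $|\partial_{x_j}u^\xi|\le C\eps^{-1}e^{-(c/\eps)|\,|x-\xi|-\rho\,|}+\CO(\exp)$ and $|\partial_{x_i}\partial_{x_j}u^\xi|\le C\eps^{-2}e^{-(c/\eps)|\,|x-\xi|-\rho\,|}+\CO(\exp)$, and integrating in polar coordinates about $\xi$ — where $\int_\Omega e^{-(2c/\eps)|\,|x-\xi|-\rho\,|}\,\mathrm{d}x\le 2\pi\int_0^\infty e^{-(2c/\eps)|r-\rho|}\,r\,\mathrm{d}r=\CO(\eps)$ — gives $\|\tilde u^\xi_j\|_{L^2}^2=\CO(\eps^{-2}\cdot\eps)$ and $\|\tilde u^\xi_{ij}\|_{L^2}^2=\CO(\eps^{-4}\cdot\eps)$, the first two bounds. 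As in the proof of Lemma \ref{lem:estH-1} I would also record here that writing $\tilde u^\xi_{ij}=\partial_{x_i}(\partial_{x_j}u^\xi)+\CO(\exp)$ as an $x$-divergence and using the dual characterisation of $\|\cdot\|_{H^{-1}}$ gives $\|\tilde u^\xi_{ij}\|_{H^{-1}}\le\|\partial_{x_j}u^\xi\|_{L^2}+\CO(\exp)=\CO(\eps^{-1/2})$, which will be needed below.

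For the eigenfunctions I would plug these into the representation $\psi^\xi_i=\sum_j a^\xi_{ij}\,\tilde u^\xi_j/\|\tilde u^\xi_j\|_{H^{-1}}+\CO(\exp)$ of Theorem \ref{LinCH}(iii). Since $(a^\xi_{ij})$ is smooth and nonsingular and $\|\tilde u^\xi_j\|_{H^{-1}}$ is bounded above and below by \eqref{ineq:Inv}, the factors $a^\xi_{ij}$ and $1/\|\tilde u^\xi_j\|_{H^{-1}}$ are $\CO(1)$, so $\|\psi^\xi_i\|_{L^2}=\CO(\|\tilde u^\xi_j\|_{L^2})=\CO(\eps^{-1/2})$. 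Differentiating in $\xi_j$ and using the quotient rule produces the terms $(\partial_{\xi_j}a^\xi_{ik})\tilde u^\xi_k/\|\tilde u^\xi_k\|_{H^{-1}}$, $a^\xi_{ik}\tilde u^\xi_{kj}/\|\tilde u^\xi_k\|_{H^{-1}}$ and $a^\xi_{ik}\tilde u^\xi_k\langle\tilde u^\xi_k,\tilde u^\xi_{kj}\rangle_{H^{-1}}/\|\tilde u^\xi_k\|_{H^{-1}}^3$; using $\|\tilde u^\xi_{kj}\|_{L^2}=\CO(\eps^{-3/2})$, the bound $|\langle\tilde u^\xi_k,\tilde u^\xi_{kj}\rangle_{H^{-1}}|\le\|\tilde u^\xi_k\|_{H^{-1}}\|\tilde u^\xi_{kj}\|_{H^{-1}}=\CO(\eps^{-1/2})$ from the previous paragraph, and the $\CO(\eps^{-1})$-at-worst control of $\partial_{\xi_j}a^\xi_{ik}$ from Theorem \ref{LinCH}(iii), every term is $\CO(\eps^{-3/2})$, so $\|\psi^\xi_{i,j}\|_{L^2}=\CO(\eps^{-3/2})$; this is exactly \eqref{psiL2}.

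The main obstacle is the analytic input of the second paragraph: Proposition \ref{thm:stat} records the exponential decay only of $U^\star_r$ on the outer side, whereas one needs it for $U^\star_{rr}$ as well, and on the inner side $r<\rho$, together with mere boundedness of $U^\star_\rho$, $U^\star_{r\rho}$, uniformly over the admissible range of $\rho$. This is obtained by differentiating the (linearized) profile equation and closing a short Gronwall/ODE estimate, using that the inhomogeneities there are themselves exponentially localized around $r=\rho$. The $\rho$-derivatives never affect the final order, since they occur only multiplied by $\partial_\xi a^\xi=\CO(\exp)$; this is precisely what makes the bookkeeping that each $\xi$-derivative gains one factor $\eps^{-1}$ on a layer of area $\CO(\eps)$ go through and yield the half-power loss $\eps^{-1/2}\mapsto\eps^{-3/2}$.
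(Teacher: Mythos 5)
Your argument is essentially the paper's: reduce to $u^\xi$ using the exponential smallness of $v^\xi$ and of $a^\xi$ and its $\xi$-derivatives, gain one factor $\eps^{-1}$ per derivative from the chain rule in the radial variable, localize on an annulus of width $\CO(\eps)$ around $|x-\xi|=\rho$, and integrate in polar coordinates to pick up the factor $\eps$; the eigenfunction bounds are then obtained from the representation of Theorem \ref{LinCH}(iii) by the quotient rule, which is exactly the computation \eqref{psiL2} in the proof of Lemma \ref{lem:estH-1}. The one real difference is where the decay of the profile derivatives comes from: the paper substitutes the one-dimensional heteroclinic via Proposition \ref{thm:stat}(vi), writing $\eps^{-1}\partial_r U^\star=\eps^{-1}U'((r-\rho)/\eps)+\CO(1)$, and then only needs the classical two-sided exponential decay of $U'$ (second derivatives are declared "analogous"), whereas you work directly with $U^\star_r$, $U^\star_{rr}$ and therefore need their two-sided exponential decay and the boundedness of $U^\star_\rho$, $U^\star_{r\rho}$, which you correctly identify as extra analytic input not recorded in Proposition \ref{thm:stat} and propose to extract from the profile ODE; this is a legitimate, slightly more self-contained variant, at the cost of proving those ODE estimates. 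One small imprecision: in the second-derivative bound the factor $|\partial_{x_i}\widehat n_j|\le 2/|x-\xi|$ is \emph{not} $\CO(1)$ near the center $x=\xi$, and a sup-bound times an exponentially small constant does not suffice there since $\int_{|x-\xi|<\rho/2}|x-\xi|^{-2}\,\mathrm{d}x$ diverges logarithmically in two dimensions; you need in addition that $U^\star_r(s,\cdot)$ vanishes (linearly, by smoothness of the radial solution at the origin, with $U^\star_{rr}$ exponentially small in the inner region) so that $U^\star_r(s)/s$ stays exponentially small, or equivalently restrict the $2/|x-\xi|$ argument to $|x-\xi|\ge\rho/2$ and treat the inner disc by smoothness — a one-line fix, but as written the claimed pointwise bound is not justified at the center.
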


\begin{proof}
First, we observe that by Theorem \ref{thm:slowmf} it suffices to analyse the partial
derivatives of $u^\xi$ as the correction term $v^\xi$ and all its derivatives are 
exponentially small.

By Lemma \ref{thm:stat} and \ref{eq:mass} we have
\begin{align}
\label{eq:L2est2}
\frac{\partial u^\xi}{\partial \xi_i} &= \eps^{-1} \frac{\partial U^\star}{\partial r}
\frac{\partial r}{\partial \xi_i} + \eps^{-2} \frac{\partial U^\star}{\partial \rho}
\frac{\partial a^\xi}{\partial \xi_i} \nonumber \\
&= \left[ \eps^{-1} U^\prime \left( \frac{r-\rho}{\eps} \right) + \CO(1) \right] \frac{\partial r}{\partial \xi_i},
\end{align}
where we defined $r = \vert x - \xi \vert$. We use the radial geometry of the problem and the fact that $U^\prime$ localizes around the boundary of the bubble. For some small 
$\delta > 0$ we consider the ring 
$\Omega_\delta = 
\left\{ x : \left| \vert x - \xi \vert - \rho \right| \leq \delta \right\}.$

We compute
\begin{align*}
\eps^{-2} \int_{\Omega_\delta} U^\prime \left( \frac{r-\rho}{\eps} \right)^2 
\left( \frac{\partial r}{\partial \xi_i}  \right)^2 \, dx
&\leq \eps^{-2} \int_{\Omega_\delta} U^\prime \left( \frac{r-\rho}{\eps} \right)^2 \, dx
\\
&\leq C \eps^{-1} \int_{\vert \eta \vert \leq \delta / \eps} U^\prime(\eta)^2 
(\eps \eta + \rho) \, d\eta \\
&\leq C\rho \eps^{-1} \int_{-\infty}^\infty U^\prime(\eta)^2 \, d\eta 
\leq C \eps^{-1}.
\end{align*}
On the set $\Omega \setminus \Omega_\delta$ we utilize 
$\vert U^\prime(\eta) \vert \leq ce^{-c\vert \eta \vert}$ and derive
\begin{align*}
\eps^{-2} \int_{\Omega \setminus \Omega_\delta} U^\prime \left( \frac{r-\rho}{\eps} \right)^2 
\left( \frac{\partial r}{\partial \xi_i}  \right)^2 \, dx
\leq C\eps^{-2} e^{-c \delta / \eps}\, \vert \Omega \setminus \Omega_\delta \vert
= \CO( \exp).
\end{align*}
Combined with \eqref{eq:L2est2} this shows 
$\Vert \tilde{u}^\xi_j \Vert_{L^2} = \CO(\eps^{-1/2}).$ Estimating the second order
derivatives can be carried out analogously.

Definition \ref{def:psi}, Lemma \ref{lem:estH-1} and the $L^2$--estimate of 
$\tilde{u}^\xi_j$ directly yield $\Vert \psi^\xi_{i} \Vert_{L^2} = \CO(\eps^{-1})$.
The bound for the second derivatives was established in \eqref{psiL2}.

\end{proof}

\newpage

\nocite{*}
\bibliography{bibliography}

\begin{thebibliography}{10}

\bibitem{AlBrFu:98}
{\sc N.~Alikakos, L.~Bronsard, and G.~Fusco}, {\em Slow motion in the gradient
  theory of phase transitions via energy and spectrum}, Calc. Var., 6 (1998),
  pp.~39 -- 66.

\bibitem{SpecEst}
{\sc N.~Alikakos and G.~Fusco}, {\em Slow dynamics for the {C}ahn-{H}illiard
  equation in higher space dimension part {I}: {S}pectral estimates},
  Communications in Partial Differential Equations,  (1994).

\bibitem{BubbleCH}
{\sc N.~Alikakos and G.~Fusco}, {\em Slow dynamics for the {C}ahn-{H}illiard
  equation in higher space dimensions: The motion of bubbles}, Arch. Rational
  Mech. Anal., 141 (1998), pp.~1 -- 68.

\bibitem{AlBaCh:94}
{\sc N.~D. Alikakos, P.~W. Bates, and X.~Chen}, {\em Convergence of the
  {C}ahn-{H}illiard equation to the hele-shaw model}, Archive for Rational
  Mechanics and Analysis, 128 (1994), pp.~165--205.

\bibitem{AlChFu:00}
{\sc N.~D. Alikakos, X.~Chen, and G.~Fusco}, {\em Motion of a droplet by
  surface tension along the boundary}, Cal. Var., 11 (2000).

\bibitem{CHBoundary}
{\sc N.~D. Alikakos, G.~Fusco, and G.~Karali}, {\em Motion of bubbles towards
  the boundary for the {C}ahn-{H}illiard equation}, European J. Appl. Math., 15
  (2004), pp.~103--124.

\bibitem{DropletAC}
{\sc D.~Antonopoulou, P.~Bates, D.~Bl\"omker, and G.~Karali}, {\em Motion of a
  droplet for the mass-conserving stochastic {A}llen-{C}ahn equation}, SIAM J
  Math. Anal., 48 (2016), pp.~670 -- 708.

\bibitem{AnDBKa:12}
{\sc D.~Antonopoulou, D.~Bl\"omker, and G.~{Karali}}, {\em Front motion in the
  one-dimensional stochastic {C}ahn-{H}illiard equation}, SIAM J. Math. Anal.,
  44 (2012).

\bibitem{BaYaZh:19}
{\sc L.~Banas, H.~Yang, and R.~Zhu}, {\em Sharp interface limit of stochastic
  {C}ahn-{H}illiard equation with singular noise}.
\newblock arXiv:1905.07216, 2019.

\bibitem{Ba:93}
{\sc P.~W. Bates}, {\em Coarsening and nucleation in the {C}ahn-{H}illiard
  equation}, in Free boundary problems involving solids ({M}ontreal, {PQ},
  1990), vol.~281 of Pitman Res. Notes Math. Ser., Longman Sci. Tech., Harlow,
  1993, pp.~220--225.

\bibitem{BaFi:93}
{\sc P.~W. Bates and P.~C. Fife}, {\em The dynamics of nucleation for the
  {C}ahn-{H}illiard equation}, SIAM J. Appl. Math., 53 (1993), pp.~990--1008.

\bibitem{BaJi:14}
{\sc P.~W. Bates and J.~Jin}, {\em Global dynamics of boundary droplets},
  Discrete and Continuous Dynamical Systems - A, 34 (2014).

\bibitem{DBGaWa:10}
{\sc D.~Bl\"{o}mker, B.~Gawron, and T.~Wanner}, {\em Nucleation in the
  one-dimensional stochastic {C}ahn-{H}illiard model}, Discrete Contin. Dyn.
  Syst., 27 (2010), pp.~25--52.

\bibitem{DBMPWa:05}
{\sc D.~Bl\"{o}mker, S.~Maier-Paape, and T.~Wanner}, {\em Phase separation in
  stochastic {C}ahn-{H}illiard models}, in Mathematical methods and models in
  phase transitions, Nova Sci. Publ., New York, 2005, pp.~1--41.

\bibitem{DBSaWa:16}
{\sc D.~Bl\"{o}mker, E.~Sander, and T.~Wanner}, {\em Degenerate nucleation in
  the {C}ahn-{H}illiard-{C}ook model}, SIAM J. Appl. Dyn. Syst., 15 (2016),
  pp.~459--494.

\bibitem{CaWe:01}
{\sc C.~Cardon-Weber}, {\em {C}ahn-{H}illiard stochastic equation: existence of
  the solution and of its density}, Bernoulli, 7 (2001), pp.~777--816.

\bibitem{Ch:96}
{\sc X.~Chen}, {\em Global asymptotic limit of solutions of the
  {C}ahn-{H}illiard equation}, Journal of Differential Geometry, 44 (1996),
  pp.~262--311.

\bibitem{Cook}
{\sc H.~Cook}, {\em Brownian motion in spinodal decomposition}, Acta
  Metallurgica, 18 (1970), pp.~297--306.

\bibitem{DPDe:96}
{\sc G.~Da~Prato and A.~Debussche}, {\em Stochastic {C}ahn-{H}illiard
  equation}, Nonlinear Anal., 26 (1996), pp.~241--263.

\bibitem{DaPrato}
{\sc G.~Da~Prato and J.~Zabzcyck}, {\em Stochastic Equations in Infinite
  Dimensions}, Cambridge University Press, 1992.

\bibitem{H-Sj}
{\sc B.~Hellfer and S.~Sjöstrand}, {\em Multiple wells in the semi-classical
  limit i}, Communications in Partial Differential Equations, 9 (1984), pp.~337
  -- 408.

\bibitem{Schindler:Diss}
{\sc A.~Schindler}, {\em (in preparation)}, PhD thesis, Universit\"at Augsburg,
  2020.

\bibitem{YaZh:19}
{\sc H.~Yang and R.~Zhu}, {\em Weak solutions to the sharp interface limit of
  stochastic {C}ahn-{H}illiard equations}.
\newblock arXiv:1905.09182, 2019.

\end{thebibliography}
\bibliographystyle{siam}

\end{document}